\newcommand{\Sig}{\Sigma}
\newcommand{\Gam}{\Gamma}
\newcommand{\GS}{\Gam^*/S}
\newcommand{\refthm}[1]{Theorem~\ref{#1}\xspace}
\newcommand{\refcor}[1]{Corollary~\ref{#1}\xspace}
\newcommand{\refdef}[1]{Defintion~\ref{#1}\xspace}
\newcommand{\reflem}[1]{Lemma~\ref{#1}\xspace}
\newcommand{\refprop}[1]{Propostion~\ref{#1}\xspace}
\newcommand{\refrem}[1]{Remark~\ref{#1}\xspace}
\newcommand{\reffig}[1]{Figure~\ref{#1}\xspace}
\newcommand{\refsec}[1]{Section~\ref{#1}\xspace}
\newcommand\lds{,\ldots ,}
\newcommand{\sse}{\subseteq}
\newcommand{\es}{\emptyset}
\newcommand{\sm}{\setminus}
\newcommand{\IFF}{if and only if } 
\newcommand{\slnf}{shortlex normal form\xspace}
\newcommand{\set}[2]{\left\{\, \mathinner{#1}\vphantom{#2}\; \left|\; \vphantom{#1}\mathinner{#2} \right.\,\right\}}
\newcommand{\oneset}[1]{\left\{\, \mathinner{#1} \,\right\}}
\newcommand{\abs}[1]{\left|\mathinner{#1}\right|}
\newcommand{\wh}[1]{\widehat{#1}}
\newcommand\eps{\varepsilon}
\newcommand\ov[1]{\overline{#1}}
\newcommand\oi[1]{{#1}^{-1}}
\newcommand\os[1]{\widetilde{#1}}
\newcommand\id{\mathop{\mathrm{id}}}
\newcommand\ZZ{\mathbb{Z}}
\newcommand\NN{\mathbb{N}}
\newcommand\HNN{\mathrm{HNN}}
\renewcommand{\phi}{\varphi}
\newcommand{\del}{\delta}
\newcommand\RAS[2]{\overset{#1}{\underset{#2}{\Longrightarrow}}}
\newcommand\ra[1]{\overset{#1}{\longrightarrow}}
\newcommand\LAS[2]{\overset{#1}{\underset{#2}{\Longleftarrow}}}
\newcommand\DAS[2]{\overset{#1}{\underset{#2}{\Longleftrightarrow}}}
\newcommand\OUTS[5]{#1
 \overset{#2}{\underset{#3}{\Longleftarrow}} #4
 \overset{#2}{\underset{#3}{\Longrightarrow}} #5}
\newcommand\INS[5]{#1
  \overset{#2}{\underset{#3}{\Longrightarrow}} #4
  \overset{#2}{\underset{#3}{\Longleftarrow}} #5}
\newcommand\RA[1]{\underset{#1}{\Longrightarrow}}
\newcommand\OUT[4]{#1
 \underset{#2}{\Longleftarrow} #3
 \underset{#2}{\Longrightarrow} #4}
 \newcommand\CRAS[1]{\overset{\circledast}{\underset{#1}{\Longrightarrow}}}
\newcommand\CLAS[1]{\overset{\circledast}{\underset{#1}{\Longleftarrow}}}
\newcommand\CDAS[1]{\overset{\circledast}{\underset{#1}{\Longleftrightarrow}}}
\newlength{\rels}
\newcommand\RC[1]{{\underset{\hspace{-.15em}#1}{\hspace{\rels}\circ\hspace{-.45em}\Rightarrow\hspace{\rels}}}}
\newcommand\DC[1]{{\underset{\hspace{-.15em}#1}{\hspace{\rels}\Leftarrow\hspace{-.45em}\circ\hspace{-.45em}\Rightarrow\hspace{\rels}}}}
\newtheorem{theorem}{{\bf Theorem}}[section]
\newtheorem{corollary}[theorem]{{\bf Corollary}}
\newtheorem{definition}[theorem]{{\bf Definition}}
\newtheorem{example}[theorem]{{\bf Example}}
\newtheorem{lemma}[theorem]{{\bf Lemma}}
\newtheorem{proposition}[theorem]{{\bf Proposition}}
\newtheorem{remark}[theorem]{{\bf Remark}}
\newenvironment{am}{\noindent\color{blue} AM: }{}
\newenvironment{ajd}{\noindent\color{red} AJD }{}
\newenvironment{vd}{\noindent\color{magenta} VD }{}
\newcommand{\vdd}[1]{
\begin{vd} #1 \end{vd}}
\newcommand{\ad}[1]{ \begin{ajd} #1 \end{ajd}}
\newcommand{\be}{\begin{enumerate}}
\newcommand{\ee}{\end{enumerate}}
\def\CRXX{{%
   \setbox0\hbox{$\Longrightarrow$}%
   \rlap{\hbox to \wd0{\hss$\circlearrowright$\hss}}\box0
}}
\def\CLXX{{%
   \setbox0\hbox{$\Longleftarrow$}%
   \rlap{\hbox to \wd0{\hss$\circlearrowright$\hss}}\box0
}}
\def\CDXX{{%
   \setbox0\hbox{$\Longleftrightarrow$}%
   \rlap{\hbox to \wd0{\hss$\circlearrowright$\hss}}\box0
}}
\def\CDVD{{%
   \setbox0\hbox{$\overset{*}\Longleftrightarrow$}%
   \rlap{\hbox to \wd0{\hss$\circlearrowright$\hss}}\box0
}}
\newcommand\CCDAS[2]{\overset{*}{\underset{#2}{\CDXX}}}
\newcommand\CCRA[1]{\underset{#1}{\CRXX}}
\newcommand\CCDA[1]{\underset{#1}{\CDVD}}
\title{Cyclic rewriting and conjugacy problems} 
\author{Volker Diekert and Andrew J.~Duncan and Alexei Myasnikov\\ 
}
\date{\today}
\begin{document}
\maketitle

\begin{abstract}
Cyclic words are equivalence classes of cyclic permutations of ordinary
words. When a group is given by a rewriting relation, a rewriting
system on cyclic words is induced, which is used to construct algorithms
to find minimal length elements of conjugacy classes in the group. These
techniques are applied to the universal groups of Stallings pregroups and 
in particular to free products with amalgamation, HNN-extensions and 
virtually free groups, to yield simple and intuitive algorithms and proofs
of conjugacy criteria.
\end{abstract}

\tableofcontents

\section{Introduction}\label{sec:intro}
Rewriting systems are used in the theory of groups and monoids to specify
presentations together with conditions under which certain algorithmic 
problems may be solved. Typically, presentations given by convergent
rewriting systems are sought as these give rise to algorithms for the 
word and geodesics problems. Recently, less stringent  
conditions on rewriting systems which still 
allow the word problem and/or the geodesics problem to be decided, have also been 
investigated: for example geodesic or geodesically perfect systems\cite{GilHHR07,ddm10}. In contrast to the classical case,
geodesically perfect systems are confluent, but  not necessarily  convergent or finite, and are designed to seek geodesics in a group, rather than 
 normal forms of elements.   In any case, all these systems depend on  rewriting of  strings of letters, or words, from the 
free monoid on the generating set of a group or monoid.

In this paper
we consider applications of rewriting systems to the conjugacy problem 
in groups. 
To this end we apply rewriting to cyclic words rather than ordinary words. 
Cyclic words can be viewed as sets of all cyclic permutations of  standard words or, 
 equivalently, as graphs, which are directed labelled cycles. 
 This allows us to construct algorithms
for finding representatives of minimal length in the conjugacy classes of elements  in groups.   
 
 We describe analogues of 
Knuth-Bendix completion for rewriting systems on cyclic words and consider how
to realise these procedures in  particular situations. 
Our approach to the completion processes on cyclic words is rather different from the one developed by 
Chouraqui in \cite{Chouraqui11}, where cyclic rewriting systems
are used to construct algorithmic solutions to the conjugacy and 
transposition problems in 
monoids, under suitable conditions. One significant difference is that we introduce 
certain new rewriting rules, which are specific to the cyclic rewriting. These rules are absolutely essential but are not  ``induced'' by standard string rewriting.
Furthermore, in \cite{Chouraqui11}
the rewriting systems considered are all finite, whereas here we allow  
infinite systems. As in the case of geodesically perfect string rewrite systems we do not require our systems to be convergent.
  This allows us to construct confluent
cyclic rewriting systems which 
are particularly suitable for working with the conjugacy problem in groups. 

We apply these techniques to the conjugacy problem in universal groups
of Stallings pregroups and fundamental groups of graphs of groups.  As a warm-up  we give short intuitive proofs of the
conjugacy criteria of free products with amalgamation \cite{mks66}
and in HNN-extensions (Collin's Lemma) \cite{ls77}. Moreover we are 
able to describe a linear time algorithm for the conjugacy 
problem in finitely generated virtually free groups. 
(Epstein and Holt \cite{EpsteinH06} 
have constructed a linear time algorithm for the
conjugacy problem in arbitrary hyperbolic groups. However, for this
special case we give  
a very simple construction  based on the underlying finite rewriting system.) 
 
Canonical examples of pregroups
and their universal groups arise from free products with amalgamation,
HNN-extensions and, more generally,  fundamental groups of  graphs of groups. 
 The conjugacy problem
may behave  badly with respect to these constructions: 
for example in 
\cite{Lockhart1989} an HNN extension $G=\HNN(H, t;\;  t^{-1}at =b)$ is constructed,
 where the base group $H$ has 
solvable conjugacy
problem and the elements $a$ and $b$ of $H$ are infinite cyclic, but
$G$ has unsolvable conjugacy problem. Several authors have studied
conditions under which amalgams, HNN-extensions and graphs of groups do
have solvable conjugacy problem, see for example \cite{Horadam1983,Horadam1989,HoradamFarr1994,Lockhart1993} and the references
therein. Our results show that the obstruction to deciding
the conjugacy problem in such groups arises only from the determination
of conjugacy of elements of length one, with respect to the corresponding
pregroup.  Thus, if the conjugacy problem in the group is undecidable our systems do not provide a computable rewriting, but they do indicate where the difficulties are. This also gives a different view-point on the results of papers \cite{BMR3,BMR2,BMR4,FMR1} where efficient generic algorithms for the conjugacy problem in free products with amalgamation and HNN-extensions where constructed. These algorithms are fast correct partial algorithms that give the answer on most ("generic")  inputs, and do not give an answer only on a negligble set of inputs.

The structure of the paper is as follows. In Section \ref{sec:prelim} we
outline the transposition and conjugacy problems for monoids and groups
 and give a brief introduction to string rewriting systems. 
Section \ref{sec:cycwords} contains the definitions of cyclic words, cyclic
rewriting systems and the appropriate notions of 
geodesic and geodesically perfect cyclic systems, needed later in the paper.
In Section \ref{sec:compcyc} we consider how a semi-Thue system may
 be ``completed'' to give a larger, semi-Thue, system which is confluent on 
cyclic words. 
 This is possible under a weak termination condition, but the price is that,
in general, length increasing rules may be introduced. This leads in \ref{sec:CS}, \ref{sec:sct} and \ref{sec:cgp} to consideration of  
analogues of 
Knuth-Bendix completion processes in which we add context sensitive rules, 
 that 
rewrite transposed  
words directly to each other; when they  
 are of some globally bounded length. 
 
In Section \ref{sec:pregroup} we describe Stallings pregroups, their
universal groups and the rewriting systems to which they are naturally
associated. Section \ref{sec:cpug} contains the main results on conjugacy
in the universal groups of pregroups, namely \refthm{thm:mainpre}, \refcor{cor:hugo} and 
\refthm{thm:conjugation}. These results are applied to free products with
amalgamation, HNN-extensions and virtually free groups in Sections 
\ref{sec:cpag} and \ref{sec:cvfg}. 

\section{Preliminaries}\label{sec:prelim}
\subsection{Transposition, conjugacy and involution}\label{sec:sn}
Let $M$ be a monoid and $f,g\in M$.  Then $f$ and  $g$ are said to be  
{\em transpose}, if there exist elements $r,s\in M$ such that $f = rs$ and $g = sr$. We write $f \sim g$ to denote  transposition.
The elements $f$ and $g$ are called 
{\em conjugate}, if there exists an element $z\in M$ such that $fz = zg$.

In general these  definitions describe different relations. Indeed,
conjugacy is transitive, but not necessarily symmetric, while the transposition relation
is reflexive and symmetric, but not in general transitive. All transpose elements are conjugate. 
If the monoid $M$ is a group, then conjugacy is an equivalence relation and 
$f$ and $g$ are conjugate \IFF  there exists an element $z\in M$ such that $f= zg\oi z$.

Throughout $\Gam$ denotes an alphabet, which simply means it is a set, which 
might be finite or infinite in this paper. An element $a \in \Gam$ is called a \emph{letter} and an element $u$ in the 
free monoid $\Gam^*$ is called a \emph{word}. A non-empty word
 can be  written as 
$u = a_1\cdots a_n$, where $a_i\in \Gam$ and $n \geq 0$. The number 
$n$ is then called the \emph{length} of $u$ and  denoted  $\abs u$. 
The empty word has \emph{length} $0$ and  is denoted  $1$, as is customary for the 
neutral element in monoids or groups. 

A crucial, but elementary fact for free monoids is that transposition is 
equal to conjugacy. More precisely, in  free monoids $fz = zg$
implies that $f = rs$, $g = sr$, and $z = r(sr)^{m}$ for some $m \geq 0$. 
Essentially this implies a straightforward algorithm for  the conjugacy problem in free groups: 
on input elements $f$ and $g$ of  a free group first do cyclic reductions,
to cyclically reduce $f$ and $g$. 
This costs only linear time.  
Then check whether the cyclically reduced words $f$ and $g$ are transpose
by searching for the word $f$  as a factor of the word $g^2$. This is 
possible in linear time by a well-known pattern matching algorithm,
usually  attributed to  Knuth-Morris-Pratt
\cite{KMP}, although it was described earlier by Matiyasevich \cite{mat73}.

Frequently, sets and monoids come with an involution. An {\em involution} on a set $X$ is a permutation $a \mapsto \ov a$ such that $\ov{\ov a} = a$. An involution of a monoid satisfies in addition $\ov {xy}= \ov y\; \ov x$. If the monoid is a group $G$ then we always assume that the involution is given 
by the inverse, thus $\ov g = \oi g$ for group elements. If the alphabet 
$\Gam$ has an involution, then it is extended to $\Gam^*$ by defining 
$\ov{a_1\cdots a_n} = \ov{a_n} \, \cdots \, \ov{a_1}$ for $a_i\in \Gam$ and $n \geq 0$. {}From now on we always assume that $\Gam$ is  equipped with an involution $\ov{\phantom{u}}: \Gam \to \Gam$. Since the identity $\id_\Gam$ is an involution, this is no restriction. 
%


\subsection{Rewriting systems}\label{sec:rs}
Monoids and groups can be defined through a set of monoid generators 
$\Gam$ and a set of defining relations $S \sse \Gam^* \times \Gam^*$. A subset $S \sse \Gam^* \times \Gam^*$ is called  
a \emph{semi-Thue system}, or a \emph{string rewriting system}. 
Given $S$, we define a relation $\RAS{}S$, 
called a  \emph{ 
one-step rewriting relation}, on $\Gam^*$ by  
 $u \RAS{}S v$ \IFF 
$u = p\ell q$ and $v = pr q$ for some $(\ell, r) \in S$. 

Let $X$ be any set and 
$\RA{}\subseteq X \times X$ be a relation. 
 The iteration of at most $k$  steps of $\RA{}$ is denoted by $\RAS {\leq k} {}$ while 
 the reflexive and transitive closure of $\RA{}$  is denoted
by  $\RAS {*}{} $.
We also write $x \LAS{}{} y$ and  $x \LAS{*}{} y$ to denote  $y \RAS{}{} x$ 
and  $y \RAS{*}{} x$, respectively. 
The  reflexive, symmetric and transitive closure of $\RA{}$ is denoted by 
$\DAS {*} {}$. Elements $x \in X$ such that there is no $y$ with 
$x\RAS {}{} y$ are called \emph{irreducible}. 
 The relation $\RA{}$ is called:
\begin{enumerate}
\item \emph{strongly confluent}, if $\OUT y{}xz$ implies
$\INS y{\leq 1}{}wz$ for some $w$;
\item \emph{confluent}, if $\OUTS y*{}xz$ implies
$\INS y{*}{}wz$ for some $w$ and
\item \emph{Church-Rosser}, if $y\DAS * {}z$ implies
$\INS y{*}{}wz$ for some $w$.
\end{enumerate}

The following facts are well-known  and easy to prove, see e.g.{} 
 \cite{bo93springer,jan88eatcs}.

\begin{enumerate}
\item Strong confluence  implies confluence.
\item Confluence  is equivalent to Church-Rosser.
\end{enumerate}

A relation
$\RA{}\subseteq X \times X$ is called  \emph{terminating} (or \emph{Noetherian}), if there is no infinite chain
\begin{align*}
  x_0 \RA{} x_1 \RA{} \cdots \quad x_{i-1} \RA{} x_i  \RA{} \cdots
\end{align*}

For a semi-Thue system $S$ the equivalence relation 
 $\DAS {*} {S}$ is a congruence, hence the equivalence classes 
form a monoid which is denoted by $\Gam^*/S$. This is the quotient of the 
monoid $\Gam^*$ 
when $S$ is viewed as a set of defining relations. We also say that $S$ is confluent, terminating etc., whenever $\RA{S}$ has the corresponding property. 

The main interest in a terminating and confluent system $S$ stems from the fact that these properties  (together with some other natural condition on the computability of the one-step rewriting process) yield 
a procedure to solve the word problem in the quotient monoid $\Gam^*/S$. If $\Gam$ is finite, then decidability of the word problem is equivalent to 
the ability to compute shortlex normal forms:  first we endow the alphabet 
$\Gam$ with a linear order $\leq$. 
The \emph{shortlex normal form} for an element $g$ in a quotient monoid $\Gam^*/S$
is then the 
lexicographically first word among all geodesic words $u \in \Gam^*$
representing $g\in M$. Recall, that a  word $u \in \Gam^*$ is called a \emph{geodesic}, 
if $u$ has minimal length among all words representing the same element as $u$ 
in $\GS$.

\begin{example}\label{ex:fg}
If the involution $\ov{\phantom{u}}$ on $\Gam$ is without fixed points, then 
we can write $\Gam$ as a disjoint union $\Gam = \Sig \dot\cup \ov \Sig$. 
Then the rewriting system $S = \set {a \ov a \ra {} 1}{a \in \Gam}$ is strongly confluent and terminating; and the quotient monoid $\Gam^*/S$ defines the free group 
$F(\Sig)$. In this case geodesics are unique. 
\end{example}

\subsection{Thue systems}\label{sec:ts}
A semi-Thue system $S$ is called a \emph{Thue system}, if $S$ does not contain any 
length increasing rules and all length preserving rules are symmetric. 
This means $(\ell, r ) \in S$ implies $\abs \ell \geq \abs r$ and that  
$\abs \ell = \abs r$ implies $(r,\ell) \in S$, too. The set $S$ of a Thue system splits naturally into two parts $S = R \dot\cup T$, where 
$R$ contains the length reducing rules and $T$ contains the symmetric 
length preserving rules. In particular, $R\cap R^{-1} = \es$ and 
$T =T^{-1}$, where, as usual, $P^{-1} = \set{(y,x)} { (x,y) \in P}$ for any 
relation $P$. 

A Thue system  $S$ is called
\emph{geodesic}, if starting from any word $u$ and applying only length decreasing rules
we eventually obtain a {\em geodesic} word $v$ (a shortest word in the set  $\{v\mid u \DAS*S v\}$). Thus, we have $u \RAS*{R} v$ for 
some geodesic word $v$. 

A  
confluent, geodesic, Thue system is called 
\emph{geodesically perfect}.  
This means whenever 
$u \DAS*S v$, then we can first compute geodesics
$u \RAS*R \wh u$ and $v\RAS*R\wh v$, by applying length reducing rules, and then we can transform $\wh u$ into 
$\wh v$ by symmetric rules {}from $T$, that is  $\wh u \DAS*T \wh v$
(which in turn is equivalent to $\wh u \RAS*T \wh v$). Thus the following statements are equivalent for geodesically perfect systems. 

\begin{enumerate}
\item   $u \DAS*S v.$ 
\item $\exists\,  \wh u , \wh v: \; u \RAS*R \wh u \RAS*T \wh v  \LAS*R v.$
\end{enumerate}

\subsection{Cyclic words and cyclic rewriting}\label{sec:cycwords}

There are two principal ways of introducing cyclic words over an alphabet $\Gam$.  
The first one is based on combinatorics of words:  
in this case one defines a  \emph{cyclic word} as 
an equivalence class  of the transposition relation on $\Gam^*$. 
Thus, if $w \in \Gam^*$ then the cyclic word represented by $w$ is 
the set $w_{\sim} = \set{vu \in \Gam^*}{uv = w}$.  
The second one, defines the cyclic word represented by $w$ to be
the directed, $\Gam$-labelled, cycle graph $C_w$, 
such that the label of the cycle, when read with orientation, 
starting  at an appropriate vertex, is $w$. 
More precisely, if $w = a_1 \ldots a_n, n >0,$ then $C_w$ is a 
directed graph with vertices $v_1, \ldots, v_n$ and directed edges 
$e_1 = (v_1 \to v_2), \ldots, e_{n-1} = 
(v_{n-1} \to v_n), e_n = (v_n \to v_1)$ where each edge $e_i$ is  
labelled by $a_i$ 
In the graph-theoretic  version, an ordinary word $w \in \Gam^*$ 
can be viewed as a directed $\Gam$-labelled path-graph $P_w$: with 
vertices  $, v_1, \ldots, v_{n+1}$ and edges $e_1 
= (v_1 \to v_2), \ldots, e_{n} = (e_{n} \to e_{n+1})$  
with labels $a_1, \ldots,a_n$, respectively.  If $w $ is the empty word $1$ 
then $P_w$ and $C_w$ consist of  a single vertex.  
We regard the combinatorial and graph theoretic views of words and
cyclic words as different aspects of the same objects and pass
from one to the other without further comment.

{\em Graph rewriting} (or transformation)  is a well-established technique of 
computing with graphs. We refer to the book \cite{Rozenberg97} 
 for details.  In general,  a graph rewriting system consists of a set of 
graph rewriting rules of the form $(L,R)$, where $L$ and  $R$ are graphs.  
To apply such a rule to a given graph $G$ one finds a subgraph of $G$ 
isomorphic to $L$ and replaces it by $R$ according to some prescribed 
procedure.  

In our case the  graphs $G$ are cycles $C_w$, where   $w \in \Gam^*$ and the 
rewriting rules are of the following two types:

\begin{itemize}
\item [1)] $(P_\ell,P_r)$  for some $\ell, r \in \Gam^*$;
\item [2)] $(C_\ell,C_r)$ for some $\ell, r \in \Gam^*$, $\ell \neq 1$.
\end{itemize}

Application of a rule $(P_\ell,P_r)$ to a graph $C_w$ involves replacing some  path subgraph $P_\ell$ of $C_w$ by the path $P_r$. This can be clearly  visualised as in  
\reffig{fig:crew1}.  
\begin{figure}
\begin{center}
\begin{tikzpicture}[decoration={markings,mark=at position 0.5 with {\arrow{>}}}]

\def\cd{0.08}
\begin{scope}[xshift=-0.5cm]

	\draw[postaction={decorate}] (0,1) arc (90:135:1) node[above,left,anchor=south east] {$\ell$} arc (135:180:1);
	\draw[postaction={decorate}] (-1,0) arc (-180:90:1);
	\fill (0,1) circle (\cd );
	\fill (-1,0) circle (\cd );
	\end{scope}

	\node (mid) at (1.5,0) {$\RAS{\circ}{(\ell,r)}$};
\begin{scope}[xshift=0.5cm]

	\draw[postaction={decorate}] (3,1) arc (0:-45:1) node[below,right,anchor=north west] {$r$} arc (-45:-90:1);
	\draw[postaction={decorate}] (2,0) arc (-180:90:1);
	\fill (3,1) circle (\cd );
	\fill (2,0) circle (\cd );
		\end{scope}
\end{tikzpicture}
\caption{Cyclic rewriting when $(\ell,r) \in S$ and $\ell$ appears on the cycle.}
\label{fig:crew1}
\end{center}
\end{figure}
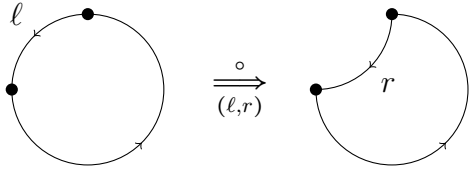
Application of the rule $(C_\ell,C_r)$ to $C_w$ is straightforward: if $C_w$ is isomorphic to $C_\ell$ (as a directed, labelled graph) then 
replace $C_w$ by $C_\ell$. Otherwise the rule does not apply. Clearly,  the
result of applying one of these rules  to cyclic word is  a cyclic word. 
A {\em rewriting system on cyclic words} is a set $T$ of rules of the type 1) and 2).  We write $C_u \RAS{}T C_v$ if $C_v$ can be obtained from $C_u$ by one of the rules from $T$. In this case we may also write 
$u_\sim \RAS{\circ}T v_\sim$ or 
$u \RAS{\circ}T v $. The definitions of Section \ref{sec:rs} 
apply to an arbitrary binary relation on a set $X$, and  
in particular to the relation $\RAS{}T$ on the set of cyclic words over  $\Gam^*$. Hence, we can talk about confluent, strongly confluent, terminating, etc. 
rewriting systems on cyclic words.

The subsystem  of $T$ consisting of the rules of type 1) corresponds
to a  string rewriting 
semi-Thue system  $S = \{ (\ell,r) \mid (P_\ell,P_r)\in T\}$. 
 On the other hand, let $S \sse \Gam^* \times \Gam^*$ be a semi-Thue system. 
Then $S$ composed on the right and left
with the relation $\sim$ defines  
a one-step relation $\RAS{\circ}S$ on  cyclic words. That is, we have
$u_\sim \RAS{\circ}S v_\sim$, \IFF there are words $u'$ and $v'$ 
such that $u\sim u'$, $u' \RAS{}S v'$, and $v'\sim v$. 
 Obviously, if a rule $(\ell, r) \in S$  is applied to 
$u_{\sim}$, then the  rewriting step $u_\sim \RAS{\circ}{(\ell,r)}  v_\sim$
may be understood  as applying   the rule $(P_\ell,P_r)$ to the graph $P_u$, as in  
\reffig{fig:crew1}. 

By analogy with string rewriting, we denote by   
  $\CRAS{S}$  the reflexive and transitive closure of $\RAS{\circ}{S}$;
write $u\LAS{\circ}{S}v$ and $u\CLAS{S}v$ for 
$v\RAS{\circ}{S}u$ and $v\CLAS{S}u$, respectively; and denote by
$\CDAS{S}$ the reflexive, symmetric, transitive closure of $\RAS{\circ}{S}$.

Neither confluence nor termination transfers from $S$ (defined on words) to 
$\RAS{\circ}S$ (defined on cyclic words). 
\begin{example}
\begin{enumerate}
\item
Let $\Gam=\{a,b,c,d\}$ and let $S$ consist of the following four rules
\[abc \ra{} bac,\quad cda  \ra{} dca,\quad  bad  \ra{} abd,
\quad dcb  \ra{} cbd.\]
To see that $\RAS{}S$ is confluent it is necessary to check all four  
cases where the left-hand sides of rules overlap. For example 
the left-hand side of $abc \ra{} bac$ overlaps with the left-hand side
of $cda  \ra{} dca$. Therefore we can rewrite $abcda$ in two ways: 
\[bacda\LAS{}S abcda\RAS{}S abdca.\] 
However 
\[bacda\RAS{}S badca \RAS{}S abdca,\]
so either way results in the same reduced word. The other three cases
are similar and so $\RAS{}S$ is confluent. However, the cyclic rewriting
system defined by $S$ is not confluent. In fact 
$abcd_\sim \RAS{\circ}S bacd_\sim$ and $abcd_\sim =bcda_\sim 
\RAS{\circ}S bdca_\sim$. Both $bacd_\sim$ and $bcda_\sim$ are irreducible
and they are not equal.
\item 
Let $\Gam=\{a,b\}$ and $S=\{ba \ra{} ab^2\}$. It is not difficult to
see that $\RAS{}S$ is terminating. However the relation $\RAS{\circ}S$ 
on cyclic words 
is non-terminating as 
\[ba_\sim  \RAS{\circ}S b^2a_\sim  \RAS{\circ}S 
b^3a_\sim  \RAS{\circ}S 
b^4 a_\sim  \RAS{\circ}S \cdots .\]
\end{enumerate} 
\end{example}

%
%
%
%

A semi-Thue system $S$ is called {\em C-confluent}, if $\RAS{\circ}S$ is confluent on cyclic words.  
If $W$ is subset of cyclic words, then we also say that $S$ is {\em C-confluent on $W$}, if $\RAS{\circ}S$ is confluent on all cyclic words in $W$.

In the rest of the section we consider some  general methods of transforming confluent semi-Thue systems into C-confluent  systems. 
\subsection{From confluence  to cyclic  confluence}\label{sec:compcyc}

Let $S \sse \Gam^* \times \Gam^*$ be a  {\em confluent}  
semi-Thue system   such that $G = \Gam^*/ S$ is a {\em group}. In this section we consider the general question (in the spirit of 
a Knuth-Bendix or Shirshov-Gr{\"o}bner completion) of how to enlarge the  system $S$ by adding new rules in order to obtain 
another system $\wh S$ such that the following hold:
\begin{enumerate}
\item\label{itone} $S \sse \wh S$ and $\Gam^*/ S = \Gam^*/ \wh S $ (i.e., $\wh S$ is a {\em conservative extension} of $S$);
\item\label{ittwo} $\RAS{\circ}{\wh S}$ is confluent on cyclic words (i.e., $\wh S$ is C-confluent). 
\end{enumerate}
Usually, we refer to $\wh S$ satisfying \ref{itone} as an {\em extension} of $S$ (omitting conservative). $\wh S$ satisfying \ref{itone} and \ref{ittwo} is termed a {\em C-extension} of $S$. Condition \ref{itone} ensures that $\wh S$ is still confluent (since $S$, and hence $\wh S$,  is Church-Rosser).

 Now we fix a  confluent semi-Thue system $S\sse \Gam^* \times \Gam^* $ such that $G = \Gam^*/ S$ is a group. For each letter  $a \in \Gam$ we can choose some fixed word
$\os a \in \Gam^*$  such that $a \os a = 1$ in $G$. We extend this definition 
(in a unique way) to all words of $\Gam^*$ as follows. Define $\os 1=1$ and assume that 
$\os u$ has been defined for all words $u$ of length at most $n$. 
Let $u=va$ be a word of length $n+1$, with $a\in \Gam$,
$v\in \Gam^*$. Then 
define $\os {u} = \os a \, \os v$.   Clearly, 
$\os{\os w} = w $ in $G$ for all words $w\in \Gam^*$.

For $x, y \in \Gam^*$ write $x > y$, if  $x\RAS{*}{S}pyq$ with $pq \neq 1$. Then $>$ is a partial order on $\Gam^*$. Since $x \RAS{*}{S} x = x1$ (here $1$ is the empty word) then $x > 1$ for every non-empty word $x$. We call the system $S$ \emph{weakly-terminating}  if the 
partial order $>$ is well-founded, i.e.,  there are no infinite chains
$x_1 > x_2 > x_3 > \cdots$. Clearly, if the system $S$ is terminating,
then it is weakly-terminating. Moreover, every semi-Thue system 
without length increasing rules is weakly-terminating. 
Note that the empty word $1$ is irreducible in every weakly-terminating system.  In particular, such a system does not have rules of the type $1 \to x\os x$ or $1 \to \os x x$, but $x\os x \RAS{*}{S} 1$ and $\os x x \RAS{*}{S} 1$ for any $x \in \Gam^*$, since $S$ is  Church-Rosser and $1$ is irreducible.

For a system $S$ define a semi-Thue system $\wh S$  by 
the following rules $u \RAS{}{\wh S}u'$ where: 
\begin{enumerate}
\item $u \RAS{}{S}u'$ (\emph{original rule}).
\item $u = qv$ and $u'= \os p r v$, if exists $pq\ra{}r \in S$, $p \neq 1 \neq q$ (\emph{prefix rule}).
\item $u = vp$ and $u'=   vr \os q$, if exists $pq\ra{}r \in S$, $p \neq 1 \neq q$ (\emph{suffix rule}).
\item $u'= \os p r\os q  $,  if exists $puq\ra{}r \in S$, $p \neq 1 \neq q$ (\emph{infix rule}).
\end{enumerate}

It is clear that $\wh S$ satisfies $S \sse \wh S$ and $\Gam^*/ S = \Gam^*/ \wh S $. 
 As before $\CRAS{\wh S}$ denotes  the reflexive and transitive closure of $\RAS{\circ}{\wh S}$.  

\begin{theorem}\label{thm:pptocc}
Let $S \sse \Gam^* \times \Gam^*$ be a confluent   weakly-terminating
semi-Thue system such that  $G = \Gam^*/ S$ is  a group. Then the following hold:
\begin{itemize}
\item [1)]  $u$ and $v$ are conjugate in $G$ \IFF 
$u\CRAS{\wh S}t \CLAS{\wh S}v$ for some (cyclic) word $t$. 
\item [2)] the rewrite system $\RAS{\circ}{\wh S}$ is confluent on 
(cyclic) words. 

\end{itemize}
 \end{theorem}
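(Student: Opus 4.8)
The plan is to derive the conjugacy criterion (1) from the cyclic confluence (2) together with an elementary description of the congruence $\CDAS{\wh S}$, and to concentrate the real work on (2).

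I first dispose of the two easy halves. Every rule of $\wh S$ is an identity in $G$: for an original rule this is immediate, and for a prefix rule $qv\to\os p r v$ we have $\os p r =_G p^{-1}r = p^{-1}(pq) = q$, so $qv$ and $\os p r v$ represent the same element of $G$ (the suffix and infix rules are symmetric). Since a cyclic step is a rotation followed by such a linear step, and a rotation $xy\mapsto yx$ conjugates, every step of $\RAS{\circ}{\wh S}$ preserves the conjugacy class; hence $u_\sim\CDAS{\wh S}v_\sim$ forces $u,v$ to be conjugate. Conversely, if $u$ and $v$ are conjugate then $u=_G z v \os z$ for some word $z$, so $u\DAS{*}{S}zv\os z$; lifting each $S$-step to a cyclic step, rotating, and cancelling via $\os z z\RAS{*}{S}1$ (legitimate since $S$ is Church--Rosser and $1$ is irreducible, by weak termination) gives $u_\sim\CDAS{\wh S}(zv\os z)_\sim=(\os z z v)_\sim\CRAS{\wh S}v_\sim$. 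Thus $u,v$ are conjugate in $G$ \IFF $u_\sim\CDAS{\wh S}v_\sim$. Granting (2), confluence is equivalent to Church--Rosser, so this last relation is in turn equivalent to $u\CRAS{\wh S}t\CLAS{\wh S}v$ for some $t$, which is statement (1).

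For (2) I would first record that $\wh S$ is confluent on \emph{ordinary} words: since $S\sse\wh S$ and $\Gam^*/S=\Gam^*/\wh S$, the relations $\DAS{*}{S}$ and $\DAS{*}{\wh S}$ coincide, and any two $\wh S$-equivalent words are already joined by $S$-rules, which lie among the $\wh S$-rules (this is the remark made just before the theorem). This same fall-back argument is exactly what fails on cyclic words: $\RAS{\circ}{S}$ is \emph{not} confluent in general, as shown above, because a left-hand side can overlap itself around the cycle in a pattern with no linear counterpart, and such a wrap-around overlap cannot be closed by $S$-rules alone. The prefix, suffix and infix rules are introduced precisely to close these cyclic overlaps, and the proof of (2) is a critical-pair analysis on the cycle that tries to reduce every cyclic peak $a_\sim\LAS{\circ}{\wh S}w_\sim\RAS{\circ}{\wh S}b_\sim$ to the already-established linear confluence of $\wh S$. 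Writing each step as a rotation of $w$ followed by a linear $\wh S$-rule, one classifies the peak by how the two redex arcs meet on the cycle. If the arcs are disjoint, or overlap on a single sub-arc, then there is one linearization $w^*$ in which both redexes are genuine non-wrapping linear $\wh S$-redexes; the peak is then an ordinary linear $\wh S$-peak on $w^*$, closed by linear confluence, and rotating the common reduct back closes the cyclic diagram.

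The genuinely new case — and the main obstacle — is that of two redexes that \emph{mutually} straddle each other's cut (the phenomenon of $abc$ versus $cda$ on the four-cycle), so that no single linearization makes both linear. Here I would use a wrapping redex $\ell=st$ ($t$ a prefix, $s$ a suffix of the chosen frame) and rewrite it \emph{in that frame} by the prefix/suffix rule derived from $\ell\to r$, transporting a factor across the cut at the cost of an inserted $\os s\, s$, which then collapses via $\os s s\RAS{*}{S}1$. The delicate point, and where the group structure is essential (and where the motivating non-group example genuinely fails to converge), is to verify that the prefix/suffix/infix rules are rich enough to realise \emph{either} redex inside the frame dictated by the \emph{other}, and that after the transported inverse factors cancel one truly lands on a common cyclic reduct rather than merely shifting the difficulty by one rotation. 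I would organise this as a finite case analysis of the doubly-overlapping configurations, tracking the transported factors explicitly. I want to stress that the whole argument must be made \emph{termination-free}: because $\wh S$ may contain length-increasing rules (an infix rule can even rewrite the empty word), the cyclic relation $\RAS{\circ}{\wh S}$ is in general non-terminating, so Newman's lemma is unavailable, and the only descending induction permitted is the $S$-cancellation $\os s s\RAS{*}{S}1$, whose termination is exactly what the weak-termination hypothesis guarantees. With (2) in hand, (1) follows by combining it with the characterization $u,v\text{ conjugate}\iff u_\sim\CDAS{\wh S}v_\sim$ from the first paragraph.
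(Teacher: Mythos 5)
Your reduction of statement (1) to statement (2) is logically sound: you correctly establish that $u,v$ are conjugate in $G$ \IFF $u_\sim \CDAS{\wh S} v_\sim$, and combined with the general equivalence of confluence and Church--Rosser this does yield (1) from (2). But this inverts the paper's logical order, and in doing so it places the entire weight of the theorem on (2) --- which your proposal does not actually prove. Your plan for (2) is a critical-pair analysis of cyclic peaks, with the genuinely hard case (the mutually wrapping redexes) explicitly deferred to a case analysis you say you ``would organise''; that case analysis is the theorem, not a detail. Worse, the plan is structurally incomplete even if every one-step peak were closed: as you yourself note, $\wh S$ contains length-increasing rules (an infix rule can rewrite the empty word), so $\RAS{\circ}{\wh S}$ is non-terminating, Newman's lemma is unavailable, and local confluence does not imply confluence. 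You name this obstacle but supply no substitute mechanism --- no strong-confluence claim, no well-founded labelling of steps, nothing that passes from local peaks to global confluence. So (2), and hence (1), remains open in your write-up.

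The missing idea --- and the paper's actual route --- is to prove (1) \emph{first}, directly, and then obtain (2) as a one-line corollary. Suppose $xu\os{x} \DAS{*}{S} v$. Argue by Noetherian induction on the conjugator $x$ with respect to the well-founded order $>$ furnished by weak termination (where $x>y$ \IFF $x \RAS{*}{S} pyq$ with $pq\neq 1$). Since $S$ is Church--Rosser, $xu\os{x}$ and $v$ have a common $S$-reduct $w$, and one classifies how the reductions of $x$, $u$ and $\os{x}$ interact inside $w$: no overlap, overlap, or nesting. In the overlap and nesting cases, a prefix (resp.\ infix) rule of $\wh S$ applied to a reduct of $u$, followed by a transposition, rewrites the problem with the conjugator $x$ replaced by a strictly smaller word $y<x$, so the inductive hypothesis applies; in the no-overlap case one concludes directly using $x''x'\RAS{*}{S}1$. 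This induction on the conjugator is where weak termination does its real work --- not, as your proposal suggests, merely in guaranteeing $\os{z}z\RAS{*}{S}1$ (that already follows from Church--Rosser together with irreducibility of $1$). Once (1) is established this way, (2) is immediate: $u_\sim \CDAS{\wh S} v_\sim$ implies conjugacy in $G$, which by (1) produces a common reduct $t$ with $u \CRAS{\wh S} t \CLAS{\wh S} v$; this is exactly the Church--Rosser property of $\RAS{\circ}{\wh S}$, equivalent to its confluence. I recommend restructuring your argument along these lines; the termination-free direct confluence proof you sketch for (2) has no visible completion.
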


\begin{proof} To prove 1) observe first (by inspection of all the rules in $\wh S$) that 
\begin{equation}\label{eq:2}
u\CRAS{\wh S}t \CLAS{\wh S}v
\end{equation}
(in fact, even $u\CDAS{\wh S}v$) for some  word $t \in \Gam^*$  implies that $u$ and $v$ are conjugate in $G$. 

Assume now that   $u, v \in \Gam^*$ define conjugate elements  in $G$, i.e.,  $x u \os x \DAS{*}{S} v$ for some 
$x\in \Gam^*$.  We claim that in this case  there exists $t \in \Gam^*$ for which (\ref{eq:2}) holds. We proceed by Noetherian induction on $x$, i.e., by induction on the number of predecessors  of $x$ relative to $>$.

Since $S$ is Church-Rosser there exists $w \in \Gam^*$ such that  $xu\os x\RAS{*}{S}w \LAS{*}{S}v$. 
 If $x$ has no predecessors then $x = 1$ and the claim is obvious (in this case $t = w$). Thus, we may assume that the claim holds for all $y<x$.

In the reduction $xu\os x\RAS{*}{S}w $ the following  cases may occur.

{\em Case 1  (no overlap)}. 
Suppose one can factorise $w = x' u' x''$ in such a way that $x \RAS*S x'$, 
$u \RAS*S u'$, and $\os x \RAS*S x''$, then we are done since $x''x'= 1 $ in $G$, so $x''x' \RAS*S1$ ($1$ is $S$-irreducible), and hence: 
\[u \RAS*S u'\LAS*S u'x'' x' \sim x' u' x'' = w,\]
which proves the claim. Thus we may assume that there is no such factorisation.

{\em Case 2 (overlap)}. Assume now that $x \RAS*S yp$, 
$u \RAS*S qv$ such that $p\neq 1, q \neq 1$ and $pq \ra{} r$ is a rule of $S$. 
Then one has   $x u \os x = y(rv \os p) \os y = v$ in $G$ and $y < x$.  Hence, by induction, 
$rv \os p \CRAS{\wh S} t \CLAS{\wh S}v$  for some  word $t \in \Gam^*$. Notice, that 
we  can apply a  prefix rule to $qv$ and after a transposition  obtain
$u \CRAS {\wh S} rv \os p$. Therefore,  $u\CRAS{\wh S}t \CLAS{\wh S}v$ and the claim holds.

The argument for  the other possible overlap, when  $\os x  \RAS*S qy$ and 
$u \RAS*S vp$,  is similar and we omit it.  

{\em Case 3 (nesting)}. We are left to consider the following situation: 
$x \RAS*S yp$, 
$u \RAS*S s$, and $\os x  \RAS*S qz$ where $p\neq 1 \neq q$ and $psq \ra{} r$ is a rule of $S$. 
Again $x u \os x = y(r\,\os q\;\os p) \os y$ in $G$ and $y < x$.  Hence,  by induction, $r\,\os  q \; \os  p \CRAS{\wh S} t \CLAS{\wh S}v$, for some word $t$.
Applying an infix rule to $s$ and  
a transposition yields $ u \CRAS{\wh S} r\, \os q \;\os p$. The claim follows. 

This finishes the proof of 1). 
Statement 2) follows from 1) since, as  mentioned above,  $u\CDAS{\wh S}v$ implies that $u$ and $v$ are conjugate in $G$.
\end{proof}

Now we show that an extension $S^\circ$ (defined below) of $S$ 
which is, in this context, extremely natural is also a $C$-extension of $S$. 
Define $S^\circ$ to be the  extension of $S$ obtained by adding 
the rules $1\to a\os a$ and $1 \to \os a a$, for every $a \in \Gam$. Thus 
$$
S^\circ  = S \cup \{ 1\to a\os a, 1 \to \os a a \mid a \in \Gam\}
$$
and, 
since $G = \Gam^*/ S$ is a group, $S^\circ$ is indeed a $C$-conservative extension of $S$.  

\begin{theorem}\label{thm:pptocc2}
Let $S \sse \Gam^* \times \Gam^*$ be a confluent   weakly-terminating
semi-Thue system such that  $G = \Gam^*/ S$ is  a group. Then the following hold:
\begin{itemize}
\item [1)]  $u$ and $v$ are conjugate in $G$ \IFF 
$u\CRAS{S^\circ}t \CLAS{S^\circ}v$ for some (cyclic) word $t$. 
\item [2)] $S^\circ$ is a C-extension of $S$.  

\end{itemize}
 \end{theorem}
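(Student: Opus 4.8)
The plan is to deduce both statements from \refthm{thm:pptocc} rather than repeat its Noetherian induction. For the easy (backward) implication of~1) I would argue, exactly as in that proof, that every one-step relation $\RAS{\circ}{S^\circ}$ preserves the conjugacy class in $G$: a transposition replaces $rs$ by the conjugate element $sr$, an original rule of $S$ fixes the element of $G$, and each new rule $1\to a\os a$ and $1\to \os a a$ also fixes the element of $G$, because $a\os a=1=\os a a$ in the group $G$ (a right inverse is a two-sided inverse). Consequently $u\CRAS{S^\circ}t\CLAS{S^\circ}v$, and indeed $u\CDAS{S^\circ}v$, forces $u$ and $v$ to be conjugate.

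For the forward implication of~1) the strategy is to show that $S^\circ$ simulates $\wh S$, i.e.\ that $u\RAS{\circ}{\wh S}v$ implies $u\CRAS{S^\circ}v$. Granting this, \refthm{thm:pptocc} gives the result at once: from $u\CRAS{\wh S}t\CLAS{\wh S}v$ we read off $u\CRAS{S^\circ}t\CLAS{S^\circ}v$. The crucial lemma is that $1\CRAS{S^\circ}\os p\,p$ and $1\CRAS{S^\circ}p\,\os p$ for every word $p=a_1\cdots a_k$. I would prove this by nesting the single-letter insertions: beginning with $1\RAS{\circ}{S^\circ}\os{a_k}a_k$ and repeatedly applying $1\to\os{a_i}a_i$ in the central position, one assembles $\os{a_k}\cdots\os{a_1}\,a_1\cdots a_k$, which equals $\os p\,p$ because the assignment $w\mapsto\os w$ is an anti-homomorphism with $\os p=\os{a_k}\cdots\os{a_1}$; the case of $p\,\os p$ is symmetric.

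With this lemma each of the prefix, suffix and infix rules of $\wh S$ is simulated directly, using the original rule which survives in $S^\circ\supseteq S$. For the prefix rule attached to $pq\to r$ I would write $qv\CRAS{S^\circ}\os p\,p\,q\,v\RAS{\circ}{S^\circ}\os p\,r\,v$, inserting $\os p\,p$ at the front and then rewriting the factor $pq$; the suffix rule is obtained by inserting $q\,\os q$ at the end of $vp$ before applying $pq\to r$, and the infix rule by inserting $\os p\,p$ at the front and $q\,\os q$ at the end of $u$ before applying $puq\to r$. Since ordinary string rewriting is a special case of cyclic rewriting and the transpositions in the definition of $\RAS{\circ}{\wh S}$ are absorbed into the cyclic-word equivalence, this proves $u\RAS{\circ}{\wh S}v\Rightarrow u\CRAS{S^\circ}v$, and hence~1).

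Finally, part~2) follows from~1) just as in \refthm{thm:pptocc}. The conservative-extension property was already recorded before the statement, so only C-confluence remains: if $t\CLAS{S^\circ}u\CRAS{S^\circ}t'$, then $t$ and $t'$ are each conjugate to $u$, hence to one another, and the forward direction of~1) produces a common reduct $w$ with $t\CRAS{S^\circ}w\CLAS{S^\circ}t'$. The main obstacle is the simulation lemma: one must verify that the nested single-letter insertions really build $\os p\,p$ and $p\,\os p$, and that the inserted copies of $p$ and $q$ then sit adjacent to the letters of $u$ so that the original $S$-rule can fire; once this is in place, the passage through \refthm{thm:pptocc} is purely formal.
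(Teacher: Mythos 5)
Your proposal is correct and follows essentially the same route as the paper: the backward implication is immediate, the forward implication is obtained from \refthm{thm:pptocc} by showing that each prefix, suffix, and infix rule of $\wh S$ is simulated by a sequence of $S^\circ$-steps, and part~2) then follows since $\CDAS{S^\circ}$ implies conjugacy. The only difference is that you spell out the simulation (the nested insertions building $\os p\,p$ and $q\,\os q$) which the paper merely records as an observation.
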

\begin{proof}
If $u\CRAS{S^\circ}t \CLAS{S^\circ}v$ for some  word $t$ then $u$ and $v$ are obviously conjugate in $G$.  Conversely, if $u$ and $v$ are conjugate in $G$, then by Theorem \ref{thm:pptocc} $u\CRAS{\wh S}t \CLAS{\wh S}v$  for some word $t$.  Observe, that application of a prefix, suffix, or infix rule from $\wh S$ is equivalent to a sequence of rule applications from $S^\circ$, so $u\CRAS{\wh S}t \CLAS{\wh S}v$  implies $u\CRAS{S^\circ}t \CLAS{S^\circ}v$. Now the result follows. 
\end{proof}

\subsection{From strong to cyclic confluence in groups}\label{sec:rcc}

The transformation of a semi-Thue system $S$ into the larger system $S^\circ$ 
described 
in \refsec{sec:compcyc} leads to length increasing rules. This is in some sense 
unavoidable. Indeed, assume we have $ab= c$ and $ba = d$ in the quotient 
$M= \Gam^*/S$ where $c,d\in \Gam$ are letters. In general we cannot expect that 
$c=d\in M$. But $c$ and $d$ are transpose, so we need cyclic rewriting rules to pass 
{}from $c$ to $d$ or vice versa. If we wish to do this by string rewriting and transpositions, then we are forced to pass {}from $c$ to $d$ via cyclic 
words of length at least 2. This is what happens in building $\wh S$ and
$S^\circ$.

Another idea is to introduce special 
rules which directly rewrite short cyclic words into each other, if 
they represent distinct 
conjugate elements.    In this case we have rules that rewrite cyclic words, 
but these rules are not induced by any string rewriting rules in the system (via equivalence relation $\sim$). We now make this precise.  

We start with a semi-Thue system $S \sse \Gam^* \times \Gam^*$, which 
we 
allow  
 to be infinite.  
Define 
\[m(S) = \sup \set {\abs \ell}{(\ell, r) \in S}.\] 
We say that $S$ is {\em left-bounded} if $m(S) < \infty$.  From now on we assume
 that the empty word is $S$-irreducible  and, to exclude trivial cases, that $2 \leq m(S)$.  To this end,  we say that $S$ is  a \emph{standard}  semi-Thue system, if it satisfies
the  two  conditions above: that is
\begin{enumerate}
\item $(1, r) \notin S$, for all non-empty words $r\in \Gam^*$, and 
\item $2 \leq m(S) < \infty$.
\end{enumerate}

A cyclic word $w_\sim$ is called $S$\emph{-short} if 
$\abs w \leq 2m(S) - 2$, and it is  called \emph{strictly} $S$\emph{-short}  if 
$\abs w < 2m(S) - 2$. When $S$ is fixed we refer to such words simply as short or strictly short.  

In the following let $C(S)$ denote any relation defined on the set of
 cyclic words 
which satisfies the following two conditions: 
\begin{enumerate}[C1]
\item\label{it:C1} If   $u_\sim \RAS{\circ}{S} v_\sim$, then $(u_\sim,v_\sim) \in C(S)$, i.e., 
$\RAS{\circ}{S} \subseteq C(S)$.
\item\label{it:C2} If $(u_\sim,v_\sim) \in C(S)$, then $u$ and $v$ are conjugate in $\Gam^*/S$.
\end{enumerate}

Later, we discuss the possibility of  constructing relations $C(S)$
with these properties. 
We write $u \RAS{\circ}{C(S)} v$ and
$v \LAS{\circ}{C(S)} u$ if   $(u_\sim,v_\sim) \in C(S)$ or if $u_\sim = v_\sim$. 
(Thus, both $\RAS{\circ}{C(S)}$ and $\LAS{\circ}{C(S)}$ are reflexive.)
Moreover, we use  $\CRAS{C(S)}$ and $\CDAS{C(S)}$ again, for the transitive, 
 and 
for the  symmetric and  transitive closure, respectively, of $\RAS{\circ}{C(S)}$.  
As $C(S)$ is a relation on cyclic words, when we say $C(S)$ is confluent,
or strongly confluent, unless we explicitly specify an alternative, we
mean confluent or strongly confluent on the set of all cyclic words.


\begin{theorem}\label{thm:CKB}
Let $S \sse \Gam^* \times \Gam^*$ be a standard strongly confluent semi-Thue system such that 
 $C(S)$ satisfies the two conditions \ref{it:C1} and \ref{it:C2} above. 
Then the following assertions are equivalent:
\begin{enumerate}[1.)]
\item The system $C(S)$ is confluent.
\item The system $C(S)$ is confluent on all short cyclic words $w$.
(That is if $w$ is short and $u \CLAS{C(S)} w \CRAS{C(S)} v$ then
there exists $t$ such that  $u \CRAS{C(S)} t \CLAS{C(S)} v$.)
\end{enumerate}
\end{theorem}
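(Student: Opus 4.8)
The implication from the first assertion to the second is immediate, since confluence on all cyclic words restricts in particular to confluence on the short ones. The whole content lies in the converse, so assume that $C(S)$ is confluent on short cyclic words. My plan is to reduce global confluence to the analysis of a single one-step peak $u \LAS{\circ}{C(S)} w \RAS{\circ}{C(S)} v$, classified according to where on the cycle $C_w$ the two rewrite steps act, and then to promote the resulting local resolutions to a Church--Rosser statement by a strong-confluence (strip-lemma) induction in which short tops are absorbed wholesale by the hypothesis.

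The combinatorial heart is the following trichotomy for a one-step peak in which both steps are cyclic $S$-steps, applied to arcs $A_1,A_2$ of $C_w$ carrying the left-hand sides $\ell_1,\ell_2$ (so $|A_i|=|\ell_i|\le m(S)$), where $n=|w|$. Two connected arcs on a cycle meet in at most two regions, so exactly one of three cases occurs. If $A_1$ and $A_2$ are edge-disjoint, the two steps operate on disjoint letters and hence commute, closing the peak in one step on each side. If they overlap in a single connected region, then either their union is a proper sub-arc (of length $\le|\ell_1|+|\ell_2|-1\le 2m(S)-1$) or it is all of $C_w$ but with a unique seam; cutting the cycle at an edge outside the union, or at the seam, unrolls the configuration into an ordinary string overlap of $\ell_1$ and $\ell_2$, which resolves in at most one step on each side because $S$ is strongly confluent. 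The genuinely cyclic case is a double overlap (two disjoint overlap regions), and this can occur only when $|\ell_1|+|\ell_2|\ge n+2$, that is when $n\le 2m(S)-2$, so that $w$ is short. Finally, any step of $C(S)$ that is not a cyclic $S$-step is one of the special whole-cycle rules between short cyclic words, so a peak in which such a step occurs automatically has short top $w$.

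Combining the cases, every one-step peak whose top $w$ is long ($n\ge 2m(S)-1$) resolves in at most one step on each side, using only the strong confluence of $S$, while every remaining one-step peak has short top and is closed outright by the hypothesis. To reach full confluence I would then run the usual induction on the combined length of the two reduction sequences of an arbitrary peak $u \CLAS{C(S)} w \CRAS{C(S)} v$: peel off the first step on each side, resolve the one-step peak they create, and recurse. The at-most-one-step resolution available on long tops keeps this induction well-founded, just as in the proof that strong confluence implies confluence, and whenever a top encountered along the way is short the hypothesis closes that sub-peak directly.

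I expect the main obstacle to be the overlap classification, and in particular pinning the threshold at precisely $2m(S)-2$: one must check that a doubly overlapping, wrap-around pair of applications --- the only genuinely cyclic critical configuration not already settled by string rewriting --- cannot occur on cyclic words longer than $2m(S)-2$, whereas single overlaps always unroll to string critical pairs. A secondary delicate point is the assembly, namely reconciling the strong, one-step resolution available on long words with the merely many-step confluence guaranteed on short words, so that the global induction stays well-founded when short tops appear mid-stream. This is exactly where strong, rather than ordinary, confluence of $S$ is indispensable, since it is what forces string critical pairs --- and hence all long-top peaks --- to close in a single step on each side.
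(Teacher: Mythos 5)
Your local analysis is sound: the overlap trichotomy, with the threshold computation showing that a doubly overlapping (wrap-around) configuration forces $\abs{w}\le 2m(S)-2$, is a correct and more detailed version of the paper's one-line observation that on a cyclic word of length at least $2m(S)-1$ the two left-hand sides overlap at most once, so that strong confluence of $S$ closes every long-top one-step peak in at most one step on each side. (Note that both you and the paper assume, beyond C1 and C2, that every rule of $C(S)$ which is not a cyclic $S$-step relates only short cyclic words; this is how $C(S)$ is actually constructed in the sequel.)

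The gap is in the assembly, precisely the point you flag as ``delicate'' and then wave through. An induction on the combined length of the two legs, peeling off the first step on each side and recursing, is not well-founded here: the moment a short top is encountered, the hypothesis closes that sub-peak, but with closing legs of \emph{unbounded} length, so the subsequent join peak has arbitrarily large combined length and the recursion need not terminate --- this is exactly why local confluence does not imply confluence in general. The same defect kills the strip lemma of the usual strong-confluence argument that you appeal to: pushing a single step across a reduction sequence keeps the strip of width one only while every top met along the way is long, and the first short top destroys the width bound. What is missing is the paper's organization of the induction: for a peak $u = w_k \LAS{\circ}{C(S)} \cdots \LAS{\circ}{C(S)} w_0 \RAS{\circ}{C(S)} \cdots \RAS{\circ}{C(S)} v_m = v$ with $1\le k\le m$, induct lexicographically on $(k,m)$ and split according to whether a short word occurs among $w_0,\ldots,w_{k-1}$ on the \emph{shorter} leg. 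If none does, all tops met by the strip are long, the strip goes through with width one, and one is left with a peak of index $(k,m-1)$, handled by induction. If some $w_\ell$ with $\ell\le k-1$ is short, one does not strip at all: first resolve the truncated peak with legs $(\ell,m)$ by induction (legitimate since $\ell<k$), obtaining $t$ with $w_\ell \CRAS{C(S)} t \CLAS{C(S)} v$, and only \emph{then} invoke the short-top hypothesis on the peak $u \CLAS{C(S)} w_\ell \CRAS{C(S)} t$. Because that hypothesis admits legs of arbitrary length and is applied last, its unbounded output never has to be fed back into the induction. This case split, and the order in which induction and hypothesis are applied, is the missing idea; without it your recursion has no valid termination measure.
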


\begin{proof}
 We have to show only that if $C(S)$ is confluent on all short cyclic words, 
 then $C(S)$ is confluent. 
 
 First consider $u \LAS{\circ}{C(S)} w \RAS{\circ}{C(S)}v $ where 
 $\abs w \geq  2m(S) -1$. Then the two rules applied to the cyclic word $w_\sim$ are inherited from the semi-Thue system $S$. Since $w$ is long enough the corresponding left-hand sides overlap in the cyclic word $w$ at most once. Since $S$ is strongly confluent, 
 we see that  there is some cyclic word $t$ such that 
 \[u \RAS{\circ}{C(S)} t \LAS{\circ}{C(S)}v.\] Next, consider 
 \[u = w_k\LAS{\circ}{C(S)} \cdots \LAS{\circ}{C(S)}w_0 \RAS{\circ}{C(S)}v_1 \RAS{\circ}{C(S)} \cdots \RAS{\circ}{C(S)}v_m = v.\]
 We may assume that $m \geq k \geq 1$. We perform an induction on 
 $(k,m)$ in the lexicographical order. 
 
 If none of  $w_0 \lds w_{k-1}$ is short, then 
 by strong confluence of $S$ we have the following situation. 
 \[u \RAS{\circ}{C(S)}  w_k'\LAS{\circ}{C(S)} \cdots \LAS{\circ}{C(S)}w_1' \LAS{\circ}{C(S)}v_1 \CRAS{C(S)}v.\] 
 Thus, we are done by induction on $m$. 
 Therefore let  $w_{\ell}$ be a short cyclic word where $\ell \leq k-1$. By induction 
 on $k$ we see that there exists 
 \[w_\ell \CRAS{C(S)} t \CLAS{C(S)}v.\] 
 Moreover, $u \CLAS{C(S)} w_\ell$ and $C(S)$ is confluent on $w_\ell$ 
 because $w_\ell$ is a short cyclic word. Hence we find
 \[u \CRAS{C(S)} t' \CLAS{C(S)} t \CLAS{C(S)}v.\]
\end{proof}

\begin{corollary}\label{cor:csconj}
Let $S \sse \Gam^* \times \Gam^*$ be a standard strongly confluent semi-Thue system such that $\GS$ is a group and 
such that first, $C(S)$ is confluent 
on all short cyclic words and second,  it satisfies the two conditions \ref{it:C1} and \ref{it:C2} 
above. 
Then two words $u$ and $v$ are conjugate in $\GS$ \IFF 
there exists a cyclic word $t$ such that 
 \[u_\sim \CRAS{C(S)} t \CLAS{C(S)}v_\sim.\]
\end{corollary}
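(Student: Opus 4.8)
The plan is to prove the two implications separately, using \refthm{thm:CKB} to turn the hypothesis ``$C(S)$ is confluent on short cyclic words'' into genuine confluence of $C(S)$. The implication ``$\Leftarrow$'' is immediate: if $u_\sim \CRAS{C(S)} t \CLAS{C(S)} v_\sim$, then each elementary step of $\RAS{\circ}{C(S)}$ is either trivial or relates two cyclic words which, by condition~\ref{it:C2}, represent conjugate elements of $\GS$. Since conjugacy in a group is an equivalence relation, following the two chains shows that both $u$ and $v$ are conjugate to $t$, hence to each other.

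For ``$\Rightarrow$'' I would argue as follows. Because $S$ is standard and strongly confluent, $C(S)$ satisfies \ref{it:C1} and \ref{it:C2}, and $C(S)$ is confluent on all short cyclic words, \refthm{thm:CKB} applies and shows that $C(S)$ is confluent, hence Church--Rosser. It therefore suffices to show that conjugate $u,v$ satisfy $u_\sim \CDAS{C(S)} v_\sim$, after which the Church--Rosser property supplies the common reduct $t$. Moreover, since $\RAS{\circ}{S} \subseteq C(S)$ by~\ref{it:C1}, we have $\CDAS{S} \subseteq \CDAS{C(S)}$, so it is enough to prove the sharper statement that conjugate elements have $\CDAS{S}$-equivalent cyclic words.

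Two observations feed into this. First, a one-step string rewrite is a special case of a one-step cyclic rewrite (take $u'=u$, $v'=v$ in the definition of $\RAS{\circ}{S}$), so $\DAS{*}{S} \subseteq \CDAS{S}$; in particular words equal in $G=\GS$ yield $\CDAS{S}$-equivalent cyclic words. Second, since $S$ is standard the empty word is $S$-irreducible, and since $G$ is a group with $\ov a$ representing $a^{-1}$ we have $\ov a a = 1$ in $G$; as $S$ is Church--Rosser and $1$ is irreducible this gives $\ov a a \RAS{*}{S} 1$. Now write the conjugacy as $v = z u \ov z$ in $G$ and induct on $\abs z$. If $z=1$ the first observation finishes. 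If $z = a z'$ with $a \in \Gam$, put $w = z' u \ov{z'}$, so that literally $z u \ov z = a w \ov a$ and $w$ is conjugate to $u$ via the shorter word $z'$. A cyclic rotation gives $(a w \ov a)_\sim = (\ov a a w)_\sim$, and the second observation yields $\ov a a w \RAS{*}{S} w$, hence $(a w \ov a)_\sim \CRAS{S} w_\sim$. Combining $v_\sim \CDAS{S} (a w \ov a)_\sim$ (from the first observation), this last relation, and the inductive hypothesis $w_\sim \CDAS{S} u_\sim$, we conclude $u_\sim \CDAS{S} v_\sim$, completing the induction.

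The main obstacle is exactly this induction: converting the group-theoretic assertion that $u$ and $v$ are conjugate into the combinatorial relation $\CDAS{S}$ on cyclic words. The decisive step is the cyclic rotation that brings the trailing $\ov a$ to the front so that it can cancel against $a$ through $\ov a a \RAS{*}{S} 1$; this is precisely where cyclic rewriting, rather than ordinary string rewriting, is needed, and where the ``group'' and ``standard'' hypotheses enter, since they guarantee both the irreducibility of $1$ and the cancellation $\ov a a \RAS{*}{S} 1$.
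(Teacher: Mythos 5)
Your argument follows essentially the same route as the paper's proof: the backward direction is immediate from condition \ref{it:C2} and transitivity of conjugacy in a group, and the forward direction converts conjugacy into $\CDAS{C(S)}$-equivalence by rotating the cyclic word so that the conjugator meets a word representing its inverse and cancels to $1$ (using that $S$ is Church--Rosser and that $1$ is irreducible because $S$ is standard), after which \refthm{thm:CKB} supplies the common reduct $t$. The only structural difference is that the paper does the cancellation in one shot --- from $xu\oi x \DAS{*}{S} v$ it rotates $(xu\oi x)_\sim$ so that the factor $\oi x x$ appears and reduces it to $1$ at once --- whereas you peel off one letter at a time by induction on $\abs z$; your induction is sound, just longer than necessary.

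One step does need repair, however: your assertion ``since $G$ is a group with $\ov a$ representing $a^{-1}$.'' Nothing in the hypotheses of the corollary ties the involution on $\Gam$ to inversion in $\GS$; by the standing convention of \refsec{sec:sn} the involution may even be the identity map, in which case $\ov a a = 1$ in $G$ is false and your key cancellation $\ov a a w \RAS{*}{S} w$ breaks down. This is exactly why the paper, in \refsec{sec:compcyc}, fixes for each letter $a$ a chosen word $\os a$ with $a \os a = 1$ in $G$ and extends this anti-multiplicatively to words. Your induction goes through verbatim after substituting $\os{\phantom{u}}$ for $\ov{\phantom{u}}$: write the conjugacy as $v = z u \os z$ in $G$, use $\os{a z'} = \os{z'}\, \os a$ to peel off the letter $a$, rotate, and cancel using $\os a a \RAS{*}{S} 1$, which does hold since $\os a a = 1$ in $G$, $S$ is Church--Rosser, and $1$ is irreducible. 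With that substitution your proof is complete and agrees with the paper's.
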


\begin{proof}
 Clearly,  $u_\sim \CDAS{C(S)} v_\sim$ implies conjugacy. 
 Now, if $u$ and $v$ are conjugate, then there is some $x$ 
 such that $xu \oi x \DAS{*}S v$. This implies 
 $xu \oi x_\sim \CDAS{C(S)} v_\sim$. We have $x\oi x \RAS*S 1$,  because 
 $S$ is standard and confluent, hence $ xu \oi x_\sim\CRAS{C(S)} u_\sim$.  We conclude $u_\sim \CDAS{C(S)} v_\sim$.
 The result follows by \refthm{thm:CKB}.
\end{proof}

\subsection{A Knuth-Bendix-like procedure on cyclic words}\label{sec:CS}

If a system $C(S)$, satisfying \ref{it:C1} and \ref{it:C2} above, 
 is large enough to ensure $u_\sim \CRAS{C(S)} v_\sim$ whenever 
$u_\sim$ and $v_\sim$ are conjugate in $\Gam^*/S$ with $u$ short, then 
we can apply \refthm{thm:CKB}; and we can use the system $C(S)$ for 
solving conjugacy in $\Gam^*/S$. In order to construct such a system 
we may use a form of Knuth-Bendix completion. This can be done in a very general way; which is fairly standard but technical, if we work out all details. 
Here 
we wish to restrict an analogue of  Knuth-Bendix completion to short words; for which we need some additional hypotheses.

We assume throughout this section that the alphabet $\Gam$ is well-ordered by $<$. 
We extend this well-order to the \emph{shortlex} order $<$ on $\Gam^*$ as usual: 
we write $u < v$ if either $\abs{u} < \abs v$ or $\abs{u} = \abs v$
and $u =p ax$, $u =p by$ with $a,b\in \Gam$ such that $a <b$. 
Moreover, we extend the well-order to cyclic words by representing 
a cyclic word $w_\sim$ by the minimal shortlex word in its class
$w_\sim = \set{uv}{vu = w}$. 
Hence, there is well-order on the set of cyclic words. 
For any relation $R\subseteq \Gam^*\times \Gam^*$ we define the \emph{descending part}
of $R$ to be 
\[\widetilde{R}=\{(l,r)\in R: l> r \textrm{ in the shortlex ordering}\}.\]

The new restriction we put on $S$ is that we assume that, for all $(\ell,r)\in S$, 
we have $\abs r 
\leq \abs \ell$. In particular, if $w$ is short and $w \CRAS S v$, then $v$ is short, too. 
Now let $C(S)$ satisfy \ref{it:C1} and \ref{it:C2} above. 
 We say that $(u_\sim, v_\sim)\in C(S)$ is a {\em short critical pair}, if $u_\sim > v_\sim$ (in the shortlex ordering) and for some $S$-short word $w$ we have:  
\begin{equation}\label{eq:criticalpair}
u_\sim \LAS{\circ}{C(S)} w_\sim \RAS{\circ}{C(S)}v_\sim
\end{equation}
We say that the critical pair  $(u_\sim, v_\sim)$ in (\ref{eq:criticalpair}) is \emph{shortlex resolved},
if 
\[u_\sim \CRAS{\widetilde{C}(S)} t_\sim \CLAS{\widetilde{C}(S)}v_\sim,\]
for  some $t$ with $v \geq t$ (where $\widetilde{C}(S)$ is the descending part of $C(S)$).
By {\em resolving} $(u_\sim, v_\sim)$ we mean adding the rule  $(u_\sim,v_\sim) $ to $C(S)$. (Note that, by definition, $u_\sim > v_\sim$.) Hence by resolving we force $(u_\sim, v_\sim)$  to be resolved. 

If we begin by taking $C(S)$ equal to $\RAS{\circ}{S}$ then, by resolving short pairs, 
we may form new systems which still satisfy 
\ref{it:C1} and $\ref{it:C2}$. 
If the alphabet $\Gam$ is finite, then this procedure of adding more rules terminates 
because there are only finitely many short words. In general,  there exists a limit system $C^*(S)$, satisfying
\ref{it:C1} and $\ref{it:C2}$ and 
such that all 
short critical pairs are shortlex resolved, but if $\Gam$ is infinite then we may only have a 
 semi-procedure for its construction.  

\begin{theorem}\label{thm:CKB_2}
Let $S \sse \Gam^* \times \Gam^*$ be a standard strongly confluent semi-Thue system such that 
$\GS$ is a group and such that, for all $(\ell,r)\in S$, 
we have $\abs r 
\leq \abs \ell$. Let $C^*(S)$  be constructed as above by resolving short critical pairs. Then the following two assertions hold: 
\begin{enumerate}[1.]
\item  The system $C^\ast(S)$ is standard and  confluent.  
\item Two words $u$ and $v$ are conjugate in $\GS$ \IFF 
there exists a cyclic word $t_\sim$ such that 
 \[u_\sim \CRAS{C^\ast(S)} t_\sim \CLAS{C^\ast(S)}v_\sim.\]
\end{enumerate}
\end{theorem}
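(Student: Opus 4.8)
The plan is to prove the two assertions in the order given, obtaining the conjugacy criterion (assertion~2) as a quick consequence of the confluence statement (assertion~1) together with \refcor{cor:csconj}. Throughout I use two facts that are preserved by the completion: first, that $C^\ast(S)$ still satisfies \ref{it:C1} and \ref{it:C2}, since each rule $(u_\sim,v_\sim)$ added while resolving a short critical pair arises from a peak over a short word and hence relates conjugate elements; second, the ``standard'' half of assertion~1, which is routine length bookkeeping. Indeed, every added rule has left-hand side a short cyclic word, of length at most $2m(S)-2$, while rules of $S$ have left-hand sides of length at most $m(S)\le 2m(S)-2$; thus $2\le m(C^\ast(S))\le 2m(S)-2<\infty$. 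Moreover no resolved pair $(u_\sim,v_\sim)$ can have $u_\sim$ equal to the empty cyclic word (it is the $<$-minimum, so $u_\sim>v_\sim$ forces $u_\sim$ nonempty), so the empty word remains irreducible and $C^\ast(S)$ is standard.

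For the confluence half of assertion~1 the crucial reduction is \refthm{thm:CKB}: as $S$ is standard and strongly confluent and $C^\ast(S)$ satisfies \ref{it:C1} and \ref{it:C2}, it suffices to prove that $C^\ast(S)$ is confluent on short cyclic words. My plan is to work first with the descending part $\widetilde{C^\ast}(S)$, which is terminating because the shortlex order on cyclic words is a well-order. Local confluence of $\widetilde{C^\ast}(S)$ on short words is exactly what resolving short critical pairs delivers: any peak $u_\sim\LAS{\circ}{\widetilde{C^\ast}(S)}w_\sim\RAS{\circ}{\widetilde{C^\ast}(S)}v_\sim$ with $w$ short is in particular a short critical pair of $C^\ast(S)$, hence shortlex resolved, so $u_\sim\CRAS{\widetilde{C^\ast}(S)}t_\sim\CLAS{\widetilde{C^\ast}(S)}v_\sim$. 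By Newman's Lemma (a terminating, locally confluent relation is confluent) the relation $\widetilde{C^\ast}(S)$ is therefore confluent on short words.

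It then remains to transfer confluence from the descending part to all of $C^\ast(S)$, and this is the step I expect to be the main obstacle. The difficulty is that $C^\ast(S)$ itself need not be terminating on short words, since a length-preserving rule of $S$ can induce a shortlex-ascending step on cyclic words, so Newman's Lemma is not available for $C^\ast(S)$ directly. The plan is to show that every single $C^\ast(S)$-step between short words is $\widetilde{C^\ast}(S)$-joinable, so that the symmetric--transitive closures of $C^\ast(S)$ and of $\widetilde{C^\ast}(S)$ coincide on short words. For a descending step this is immediate. For a non-descending step $a_\sim\RAS{\circ}{C^\ast(S)}b_\sim$ with $a_\sim<b_\sim$, I would exhibit it through the peak $b_\sim\LAS{\circ}{C^\ast(S)}a_\sim\RAS{\circ}{C^\ast(S)}a_\sim$ over the short word $a$, the right arm being the reflexive step; this is a short critical pair $(b_\sim,a_\sim)$, and its resolution forces $b_\sim$ and $a_\sim$ to be $\widetilde{C^\ast}(S)$-joinable (this is where one must check carefully that the completion, read with the reflexivity convention on $\RAS{\circ}{C(S)}$, really captures such steps; alternatively one argues by a direct well-founded induction along the shortlex order). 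Granting this, any peak $u_\sim\CLAS{C^\ast(S)}w_\sim\CRAS{C^\ast(S)}v_\sim$ over a short word yields $u_\sim\CDAS{\widetilde{C^\ast}(S)}v_\sim$, and confluence (equivalently Church--Rosser) of $\widetilde{C^\ast}(S)$ on short words produces a common reduct; since $\widetilde{C^\ast}(S)\subseteq C^\ast(S)$ this is a common $C^\ast(S)$-reduct. By \refthm{thm:CKB}, $C^\ast(S)$ is confluent, which finishes assertion~1.

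Assertion~2 is then immediate. The system $C^\ast(S)$ satisfies \ref{it:C1} and \ref{it:C2} and, being confluent, is in particular confluent on all short cyclic words; hence \refcor{cor:csconj} applies verbatim and gives that $u$ and $v$ are conjugate in $\GS$ \IFF $u_\sim\CRAS{C^\ast(S)}t_\sim\CLAS{C^\ast(S)}v_\sim$ for some cyclic word $t$.
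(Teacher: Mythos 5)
Your proposal is correct and follows essentially the same route as the paper's own proof: reduce to confluence on short cyclic words via \refthm{thm:CKB}, show the descending part $\widetilde{C^\ast}(S)$ is terminating and locally confluent on short words (hence confluent by Newman's Lemma), transfer equivalence between $C^\ast(S)$ and $\widetilde{C^\ast}(S)$ on short words, and finish with \refcor{cor:csconj}. The only difference is cosmetic: at the transfer step the paper asserts that for each $S$-step one of the two orientations lies in $\widetilde{C^\ast}(S)$, whereas you derive $\widetilde{C^\ast}(S)$-joinability of ascending steps via the reflexive-peak trick on short critical pairs --- the same underlying idea, which your version actually handles more carefully.
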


\begin{proof}
  By construction  $C^\ast(S)$ is standard. Having shortlex resolved all short critical pairs, 
  the descending part $\widetilde C = \widetilde{C^*}(S)$ of $C^*(S)$ %
  is terminating and 
  contains all new rules $(u_\sim,v_\sim)$ added to the system $\RAS{\circ}{S}$.
  Therefore $\widetilde C$ is  locally confluent on short words. 
  Moreover, if $u_\sim \RAS{\circ}{S} v_\sim$ then,
  since $<$ is a total order and $|l|\ge |r|$ for all $(l,r)\in S$, 
  either $(u_\sim,v_\sim)$ or $(v_\sim,u_\sim)$ belongs to $\widetilde C$. Hence   
  $u_\sim \CDAS{C^\ast(S)} v_\sim $ \IFF 
  $u_\sim \CDAS{\widetilde C} v_\sim $. 
  (Note that we don't claim that $\widetilde C$ satisfies C1 or C2. 
  We don't even have $S \sse \widetilde C$, in general.) 
  
  We are now ready to show that $C^\ast(S)$ is confluent on short words. 
  Consider the following  situation where $w$ is short: 
  $$u_\sim \CLAS{C^\ast(S)} w_\sim \CRAS{C^\ast(S)}v_\sim.$$
  
  Since $\abs r \leq \abs \ell$ for $(\ell,r)\in S$
  (and hence for all $(\ell,r)\in C^\ast(S)$) 
  we see that 
  $u$ and $v$ are short, and moreover
  $u_\sim \CDAS{\widetilde C} v_\sim $. Note that the path
  $u_\sim \DAS{}{\widetilde C} u_\sim'  \DAS{}{\widetilde C} \cdots \DAS{}{\widetilde C}v_\sim $ 
  ( via $w_\sim$) never leaves the set of short words. 
  Being terminating and locally confluent, the system 
  $\widetilde C$ is confluent on short words. Hence,  since $u_\sim \CDAS{\widetilde C} v_\sim $, 
  there exists a cyclic word $t_\sim$ such that 
  \[u_\sim \CRAS{\widetilde C} t_\sim \CLAS{\widetilde C}v_\sim.\]
  As 
  $\widetilde C \sse C^\ast(S)$, we see that $C^\ast(S)$ is confluent on short words, as claimed. 
  
  Finally, $C^\ast(S)$ satisfies conditions C1 and C2 above; 
  and so \refcor{cor:csconj} applies, to give the result. 
\end{proof}
\subsection{Strongly confluent Thue systems}
\label{sec:sct}

If the system $S$ is Thue (c.f. Section~\ref{sec:ts}) then we may construct $C^*(S)$ in finitely many
steps as follows. We start with  $C_0 = C_0(S)\,= \;  \RAS{\circ}{S}$.
This is a relation defined on the set of  cyclic words where all rules are either  length decreasing   
or length preserving and then symmetric. We  call any such relation on cyclic words  \emph{Thue}.

At each step let us define a Thue relation $C_i$ 
satisfying conditions C1 and C2 above. 
We let $U_i$ be the set of ``unresolved
short critical pairs'' $(u_\sim,v_\sim)$, which are defined in the Thue case as follows: 
$$ u_\sim \LAS{\circ}{C_i} w_\sim \CRAS{C_i}w'_\sim \RAS{\circ}{C_i} v_\sim$$
where $w$ is $S$-short, $\abs w = \abs w'\geq \abs u \geq \abs v \geq 1$,  
and neither $ u_\sim \CRAS{C_i} v_\sim$ nor $ u_\sim \CLAS{C_i} v_\sim$.

Note that, since $\abs w = \abs w'$ we have $ u_\sim \LAS{\circ}{C_i} w_\sim \CLAS{C_i}w'_\sim \RAS{\circ}{C_i} v_\sim$, too. Thus, for unresolved pairs 
we must have $\abs w >  \abs u \geq \abs v \geq 1$. (Because if, say $\abs w = \abs v$, then $ u_\sim \CLAS{C_i}w'_\sim \LAS{\circ}{C_i} v_\sim$.)

At  the next step we let $C_{i+1}$ be the relation obtained from $C_i$ by adding a pair
$(u_\sim, v_\sim)$ to $C_i$, for all $(u_\sim,v_\sim)\in U_i$, and, in
addition, by adding $(v_\sim, u_\sim)$ whenever  
$|u_\sim|= |v_\sim|$. This keeps $C_{i+1}$ Thue. Finally, 
 we let
\begin{equation}\label{eq:KB2} 
C^*(S)=\bigcup \set{C_{i}(S)}{i \in \NN}.
\end{equation}

\begin{theorem}
Let $S$ be a standard, strongly confluent, Thue system, let $m=m(S)$ and let $C^*(S)$ be the
system defined in \eqref{eq:KB2} above. Then 
$C^*(S)=C_{2m-2}$, and $C^*(S)$  is a confluent, Thue system, satisfying conditions \ref{it:C1} and \ref{it:C2}.  
\end{theorem}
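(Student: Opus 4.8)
The plan is to verify four things—that every $C_i$ is a Thue relation satisfying \ref{it:C1} and \ref{it:C2}, that the construction stabilises with $C^*(S)=C_{2m-2}$, that $C^*(S)$ is confluent, and that $C^*(S)$ inherits the Thue/\ref{it:C1}/\ref{it:C2} properties—treating the stabilisation count as the crux. First I would show by induction on $i$ that each $C_i$ is Thue and satisfies \ref{it:C1} and \ref{it:C2}. The base case $C_0=\RAS{\circ}{S}$ is immediate. For the step, forming $C_{i+1}$ only adjoins pairs $(u_\sim,v_\sim)$ with $\abs{u}\ge\abs{v}$ (together with $(v_\sim,u_\sim)$ when $\abs{u}=\abs{v}$), so $C_{i+1}$ stays Thue; \ref{it:C1} persists since $C_0\sse C_{i+1}$; and \ref{it:C2} persists because a resolved pair arises from a peak $u_\sim\LAS{\circ}{C_i}w_\sim\CRAS{C_i}w'_\sim\RAS{\circ}{C_i}v_\sim$, along which \ref{it:C2} for $C_i$ and transitivity of conjugacy exhibit $u$ and $v$ as conjugate in $\GS$. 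Unions preserve all three properties, so once $C^*(S)=\bigcup_i C_i$ is shown to equal some $C_N$ it is Thue and satisfies \ref{it:C1}, \ref{it:C2}.

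The main obstacle is the stabilisation $C^*(S)=C_{2m-2}$, and I would deduce it from one structural fact: the rules added during completion are \emph{type-2} cyclic rules, so an added rule $(u_\sim,v_\sim)$ applies to a cyclic word $w_\sim$ only when $w_\sim=u_\sim$ (the entire cycle), never as a proper cyclic factor. Hence if a short critical pair of $C_{i+1}$ uses such an added rule on one branch, its peak $w$ satisfies $w_\sim=u_\sim$, so its peak length is exactly $\abs{u}$. Let $h_i$ be the largest peak length among the unresolved pairs $U_i$; since $U_0$ consists of overlaps of $S$-rules on short cycles, $h_0\le 2m-2$. I claim $h_{i+1}\le h_i-1$ whenever $U_{i+1}\neq\es$. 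Indeed, a pair of $U_{i+1}$ whose peak diagram used only rules of $C_i$ would be a critical pair of $C_i$; being unresolved in $C_{i+1}\supseteq C_i$ it would be unresolved in $C_i$, hence lie in $U_i$ and so be resolved on forming $C_{i+1}$—a contradiction. Thus its diagram uses some rule of $C_{i+1}\sm C_i$, i.e.\ a type-2 rule $u_\sim\to v_\sim$ adjoined from $U_i$, and then its peak length equals $\abs{u}$, with $\abs{u}<\abs{w}\le h_i$ for the $U_i$-peak $w$ producing that rule. As every peak length is at least $2$ (from $\abs{w}>\abs{u}\ge\abs{v}\ge 1$), the bound $h_i\le 2m-2-i$ forces $U_i=\es$ by the time $i=2m-2$; in particular $U_{2m-2}=\es$, so $C_{2m-2}=C_{2m-1}=\cdots$ and $C^*(S)=C_{2m-2}$.

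Finally I would establish confluence, exactly as in the confluence half of the proof of \refthm{thm:CKB_2} (which uses no group hypothesis). By \refthm{thm:CKB} it suffices to prove confluence on short cyclic words. Writing $\widetilde C=\widetilde{C^*}(S)$ for the descending part, $\widetilde C$ is terminating because each step strictly lowers the shortlex order, and—since stabilisation gives $U_{2m-2}=\es$, so every short critical pair is resolved in $C^*(S)$—$\widetilde C$ is locally confluent on short words; being terminating and locally confluent there, it is confluent on short words. As $\abs{r}\le\abs{\ell}$ for every rule, rewriting a short word never leaves the short words, and for short $u_\sim,v_\sim$ one checks $u_\sim\CDAS{C^*(S)}v_\sim$ iff $u_\sim\CDAS{\widetilde C}v_\sim$; with confluence of $\widetilde C$ on short words this yields confluence of $C^*(S)$ on short words, hence confluence of $C^*(S)$ by \refthm{thm:CKB}. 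Together with the first two paragraphs this shows $C^*(S)=C_{2m-2}$ is a confluent Thue system satisfying \ref{it:C1} and \ref{it:C2}, as required.
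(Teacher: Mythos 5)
Your first two paragraphs are correct and are essentially the paper's own argument: the paper also derives $C^*(S)=C_{2m-2}$ from the observation that an unresolved pair of $C_{i+1}$ must use a rule of $C_{i+1}\sm C_i$, that such a rule (being a relation between whole cyclic words) applies only when the peak equals its left-hand side, and that consequently the peaks shrink strictly at each stage; your $h_i$ bookkeeping just makes explicit the paper's remark that ``at every step the words $w$ under consideration get shorter''. Likewise the preservation of the Thue property and of \ref{it:C1}, \ref{it:C2} matches the paper.

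The confluence paragraph, however, contains a genuine gap: the claim that resolution of all short critical pairs makes $\widetilde C=\widetilde{C^*}(S)$ locally confluent on short words is false in the Thue setting. In \refthm{thm:CKB_2} that implication is sound because there ``resolving'' a pair means adjoining the shortlex-descending rule $(u_\sim,v_\sim)$, so every resolved pair is joinable inside $\widetilde C$ by construction. In \refsec{sec:sct}, ``resolved'' means only that $u_\sim\CRAS{C_i}v_\sim$ or $u_\sim\CLAS{C_i}v_\sim$, i.e.\ reachability in a system whose length-preserving rules are symmetric, and pairs already resolved in this weaker sense never become rules; such a connecting path may ascend in the shortlex order, and $\widetilde C$ discards exactly the ascending steps. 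Concretely, take $\Gam=\oneset{x,y,a,c,d}$ with $c<a<d<x<y$ and $S=\oneset{xy\to a,\; xy\to c,\; a\to d,\; d\to a,\; d\to c,\; c\to d}$, a standard, strongly confluent Thue system with $m=2$. The pair $(a_\sim,c_\sim)$ arising from the peak $a_\sim\LAS{\circ}{S}(xy)_\sim\RAS{\circ}{S}c_\sim$ is already resolved in $C_0$ via $a_\sim\RAS{\circ}{S}d_\sim\RAS{\circ}{S}c_\sim$, so no rule is ever added and $C^*(S)=\;\RAS{\circ}{S}$. In $\widetilde C$ the only rules among the letters are $d_\sim\to a_\sim$ and $d_\sim\to c_\sim$; hence $a_\sim$ and $c_\sim$ are distinct $\widetilde C$-irreducible words and the $\widetilde C$-peak at $(xy)_\sim$ cannot be joined, even though $C^*(S)$ joins it at $d_\sim$. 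So your intermediate lemma fails, and with it the appeal to Newman's lemma for $\widetilde C$. The paper avoids the descending part altogether: it sets $u\equiv v$ iff $u_\sim\CRAS{C^*(S)}v_\sim$ and $u_\sim\CLAS{C^*(S)}v_\sim$, observes that $C^*(S)$ induces a terminating relation on $\equiv$-classes (a step between distinct classes strictly decreases length, because length-preserving rules are symmetric and hence stay inside a class), gets local confluence on short classes directly from resolution-as-reachability, applies Newman's lemma on classes, lifts the result back to short cyclic words using the Thue property, and only then invokes \refthm{thm:CKB}. Replacing your $\widetilde C$ argument by this quotient argument repairs the proof; the rest of what you wrote stands.
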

\begin{proof}
By definition
$C_i$ are Thue for all $i\ge 0$; and short words have length at most  $2m-2$.
When considering $ u_\sim \LAS{\circ}{C_i} w_\sim \CRAS{C_i}w'_\sim \RAS{\circ}{C_i} v_\sim$
we may assume that $\abs u < \abs w$ (see above) and that $u_\sim \LAS{\circ}{C_i\sm C_{i-1}} w_\sim $ (or $w'_\sim \RAS{\circ}{C_i\sm C_{i-1}} v_\sim$). Thus, at every step the words $w$ under consideration get shorter. We 
conclude $C^*(S)=C_{2m-2}(S)$, as claimed.  

Next, we show that $C^*(S)$ is confluent on short cyclic words.
To this end we define an equivalence relation $\equiv$ on cyclic words by
 $ u \equiv  v$ if 
$ u_\sim \CRAS{C^*(S)} v_\sim$ and $ u_\sim \CLAS{C^*(S)} v_\sim$. 
Thus, if $ u \equiv  v$ then  $ u_\sim \CDAS{C^*(S)} v_\sim$ and $\abs u = \abs v$.
We can view  $C^*(S)$ as a terminating rewriting system on equivalence classes $[u] =  \set{v}{v \equiv  u}$. 
By construction, $C^*(S)$ is locally confluent on classes $[w]$, where $w$ is short. But together with termination, we see that $C^*(S)$ is actually confluent on these classes $[w]$. But this implies that $C^*(S)$ is confluent 
on short cyclic words, because it is Thue. 
Finally, $C^*(S)$ satisfies the two conditions C1 and C2 above. Since 
$S$ is also  a standard, strongly confluent semi-Thue-system, we may apply 
\refthm{thm:CKB}.
\end{proof}

\subsection{Cyclic geodesically perfect systems}\label{sec:cgp}
In this section we consider  an analogue for cyclic rewriting systems
of geodesically perfect string rewriting systems; and 
adapt our Knuth-Bendix completion process to these systems.      
Let $S \sse \Gam^* \times \Gam^*$ be a standard semi-Thue system such that 
$\GS$ is a group. 
A cyclic word $w_\sim$ is called \emph{geodesic} (w.r.t.{} $S$), if 
 $w$  is a  shortest word in its conjugacy class. That is  
 \[\abs w = \min\set{\abs u}{u \in \Gam^*\textrm{ and }\exists x: xu \oi x = w\in \GS}.\]
 A cyclic word $w_\sim$ is called \emph{quasi-geodesic} (w.r.t.{} $S$), if 
 it is either geodesic or it is strictly $S$-short, but it is not equal to the neutral element in 
 $\GS$. Note that all non-trivial geodesic cyclic words are  quasi-geodesic 
  and more importantly 
 in 2-monadic systems every quasi-geodesic cyclic word is actually geodesic. 

Now,  a  Thue relation $C(S)$ on cyclic words, 
satisfying \ref{it:C1} and \ref{it:C2} above, is called \emph{quasi-geodesic}, 
if by applying a sequence of  
length reducing rules from $C(S)$  to a cyclic word $w_\sim$ 
we eventually derive a  quasi-geodesic  
cyclic word $u_\sim$.   
In order to  be geodesically perfect $C(S)$ must satisfy  stronger conditions:
 $C(S)$ is called   
 \emph{geodesically perfect}  if, by applying  a sequence of 
 length reducing rules from $C(S)$ to a cyclic word $w_\sim$,  
 we eventually derive a geodesic  
 cyclic word $u_\sim$.  Moreover, if  two geodesics $u_\sim$ 
and $v_\sim$ can both  be derived  from $w_\sim$, then it must be possible to rewrite $u_\sim$ into 
$v_\sim$ using only  length  preserving rules from $C(S)$. 
Note that every geodesically  perfect Thue system on cyclic words is confluent.

 Now, if $S\subseteq \Gam^*\times \Gam^*$ is a Thue system then we 
 say that $S$ is \emph{C-quasi-geodesic}  
 if the system $\RAS{\circ}{S}$, on cyclic words,  
 is quasi-geodesic. 
The following result shows that a geodesic Thue system  is innately 
 C-quasi-geodesic. 
 \begin{theorem}\label{thm:qcg}
 Let $S \sse \Gam^* \times \Gam^*$ be a standard,   
 geodesic, Thue system. Then $S$ is C-quasi-geodesic. 
\end{theorem}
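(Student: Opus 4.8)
The plan is to show that exhaustively applying the length-reducing rules of $\RAS{\circ}{S}$ to any cyclic word $w_\sim$ terminates in a quasi-geodesic cyclic word. Since the length strictly decreases at each such step, this process halts at some cyclic word $u_\sim$ to which no length-reducing rule of $\RAS{\circ}{S}$ applies; it therefore suffices to prove that every such $R^\circ$-irreducible cyclic word $u_\sim$ is quasi-geodesic. If $\abs u < 2m-2$ then $u_\sim$ is strictly $S$-short and we are done (a word representing the neutral element is driven all the way down to the empty cyclic word, since a non-empty geodesic word cannot represent $1$, so the neutral class needs no separate treatment). Thus the substance is the case $\abs u \geq 2m-2$, where I must show that $u_\sim$ is geodesic, i.e.\ that $u$ has minimal length in its conjugacy class.

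First observe that $R^\circ$-irreducibility of $u_\sim$ implies in particular that no length-reducing rule of $S$ has its left-hand side as a factor of the linear word $u$; that is, $u$ is $R$-irreducible. Since $S$ is geodesic, a word to which no length-reducing rule applies is itself geodesic, so $u$ is a geodesic linear word and $\abs u$ is the geodesic length of the element it represents in $\GS$. I then claim that $u^k$ is $R$-irreducible, hence geodesic, for every $k \geq 1$. Indeed, suppose some reducing rule $(\ell,r)$ has $\ell$ as a factor of $u^k$. As $\abs \ell \leq m \leq \abs u$ (using $m\geq 2$, so $2m-2\geq m$), this occurrence meets at most one of the junctions between consecutive copies of $u$. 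If it meets none, then $\ell$ is a factor of a single copy of $u$, contradicting $R$-irreducibility of $u$. If it meets one junction, then $\ell$ factors as $sp$ where $s$ is a non-empty suffix of $u$ and $p$ a non-empty prefix of $u$; but then $sp$ is a factor of $u^2$ of length at most $\abs u$, hence a cyclic factor of $u_\sim$, so the rule is applicable to $u_\sim$ cyclically, contradicting $R^\circ$-irreducibility. Therefore $u^k$ is geodesic and its geodesic length equals $\abs{u^k} = k\abs u$.

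Finally, suppose for contradiction that some $v$ is conjugate to $u$ in $\GS$ with $\abs v < \abs u$, and choose a word $z$ with $u = z v \ov z$ in $\GS$. Then $u^k = z\, v^k\, \ov z$ in $\GS$ for every $k$, so the word $z v^k \ov z$, of length $2\abs z + k\abs v$, represents the same element as the geodesic word $u^k$. Consequently $k\abs u \leq 2\abs z + k\abs v$, that is $k(\abs u - \abs v)\leq 2\abs z$, for all $k\geq 1$. Since $\abs u - \abs v \geq 1$ while $\abs z$ is fixed, letting $k\to\infty$ yields a contradiction. Hence no conjugate of $u$ is shorter than $u$, so $u_\sim$ is geodesic, completing the proof.

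The main obstacle is the junction analysis showing that every power $u^k$ is $R$-irreducible: its crucial input is that each left-hand side is short enough ($\abs\ell \leq m \leq \abs u$) that a factor of $u^k$ straddling a junction is genuinely a \emph{cyclic} factor of $u_\sim$, allowing $R^\circ$-irreducibility to be invoked; this is exactly what keeps the analysis to a single junction. Note that this argument in fact yields geodesicity for all $\abs u \geq m$, so the threshold $2m-2$ of strict shortness comfortably covers the long case, and that \textbf{confluence of $S$ is never used} — only that $S$ is standard, Thue and geodesic — consistent with the weak hypotheses of the statement. The ``same conjugator for all powers'' estimate is what converts geodesicity of all powers of $u$ into cyclic geodesicity, avoiding any appeal to stable length or to the completion machinery of the earlier sections.
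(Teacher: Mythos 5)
Your proof is correct, and it reaches the theorem by a genuinely different route from the paper's, even though both arguments ultimately play powers of the given word against geodesicity of the linear system. The paper argues in the direction ``not geodesic and not strictly short implies cyclically reducible'': starting from $u_\sim \CDAS{S} w_\sim$ with $\abs u < \abs w$, it runs an induction along the derivation sequence, absorbing transpositions into bounded ``wings'', to produce words $u_1, u_2$ and an integer $m_0$ with $u_1 u^{k-m_0} u_2 \DAS{*}{S} w^k$ for all $k > m_0$; geodesicity of $S$ then forces some length-reducing rule to apply to $w^k$, and that rule either has $\abs \ell \leq \abs w$, hence transfers to the cyclic word $w_\sim$, or else witnesses that $w$ is strictly short. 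You argue the contrapositive: cyclic irreducibility together with $\abs u \geq 2m-2$ forces every power $u^k$ to be linearly irreducible --- your junction analysis is the same combinatorial transfer fact the paper uses (an occurrence of length at most $\abs u$ straddles at most one junction, hence is a cyclic factor), just applied in the opposite direction --- so every $u^k$ is geodesic; then one fixed conjugator serving all powers kills any shorter conjugate by a linear-growth estimate. The trade is this: you avoid the paper's derivation-tracking induction entirely, replacing it by the group identity $u^k = z v^k z^{-1}$ in $\GS$, which is cleaner but leans on $\GS$ being a group (the standing hypothesis of the section, needed in any case to define geodesic cyclic words), whereas the paper's induction lives purely at the level of the rewriting relation $\CDAS{S}$ and handles the interleaving of transpositions and rewriting steps by hand. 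As you note, your argument gives geodesicity already for $\abs u \geq m$, and neither proof uses confluence.

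One small repair: you ``choose a word $z$ with $u = z v \ov z$ in $\GS$'' and count $\abs{z v^k \ov z} = 2\abs z + k\abs v$. The involution on $\Gam^*$ is an arbitrary involution (the identity involution is expressly allowed), so $\ov z$ need not represent the inverse of $z$ in $\GS$. The fix is immediate: since $\GS$ is a group, choose any word $z'$ representing the inverse of the element represented by $z$; then $u^k = z v^k z'$ in $\GS$, and since $u^k$ is geodesic, $k\abs u \leq \abs z + \abs{z'} + k\abs v$ for all $k \geq 1$, which yields the same contradiction because $\abs z + \abs{z'}$ is a constant independent of $k$.
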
  
\begin{proof}  
We have to show  the following: 
if $u_\sim \CDAS{S} w_\sim$ and $\abs u < \abs w$, then either a 
length reducing rule applies to the cyclic word $w_\sim$ or 
$w_\sim$ is strictly $S$-short. To begin with  
let $u_\sim \CDAS{S} w_\sim$. 
Then there is a sequence  $u = w_0, \ldots, w_\ell=w$ 
such that $w_{i-1}$ and  $w_{i}$ 
are related in one of the following three ways: 
 \[w_{i-1}\RAS{}S w_{i} \quad \mbox{or} \quad w_{i-1}\LAS{}S w_{i}
  \quad \mbox{or} \quad w_{i-1}\sim w_{i}.\]
 First, we claim that there exist  $m\in \NN$ and $u_1,u_2 \in \Gam^*$ such that $u_1 u^{k-m}u_2 \DAS{*}S w^k$, for all $k> m$.

 This is true for $\ell=0$ with $m=0$. For $\ell \geq 1$ the result holds
 by induction for 
 $v= w_1, \ldots, w_\ell=w$ with some $m\in \NN$ and $v_1,v_2 \in \Gam^*$. 
 Now, if $u\RAS{}S v$, then we have 
$v_1 u^{k-m}v_2\RAS{*}S v_1 {v}^{k-m}v_2 \DAS{*}S w^k$, for all $k> m$. Similarly, if $u\LAS{}S v$, then we have $v_1 u^{k-m}v_2\LAS{*}S v_1 {v}^{k-m}v_2 \DAS{*}S w^k$, for all $k> m$.
 Now, let $u= u_2u_1$ and $v = u_1u_2$. Define $m' = m+1$. 
 We have:   
 \[v_1u_1 {u}^{k-m-1}u_2v_2  \DAS{*}S v_1 {v}^{k-m}v_2 \DAS{*}S w^k, \text{ for all } k> m.\]
 Replacing $m$, $u_1$ and $u_2$ with $m'$,  $v_1u_1$, and $u_2v_2$,
respectively,  we see that the claim holds. 

  Next, assume that we have $\abs u < \abs w$ and choose $m$, 
$u_1$ and $u_2$ as above. Take $k$ large enough  to make 
  $|w^k|>|u_1 u^{k-m}u_2|$.  Since $u_1 u^{k-m}u_2 \DAS{*}S w^k$ and $S$ is geodesic, a length reducing rule $(\ell, r)\in  S$ applies to  $w^k$. 
  If $\abs \ell \leq \abs w$, then the same rule applies to the cyclic word $w_\sim$, and we are done. In the other case, $w$ is strictly $S$-short, and we are done, too.  
  \end{proof}
 
In the next section of the paper we shall be concerned with standard,  
geodesically perfect, Thue string rewriting systems $S$, which are $2$-monadic:
 that is $m(S)=2$. For the rewriting system $\RAS{\circ}{S}$ induced by
such $S$, there is a particularly simple form
of Knuth-Bendix completion. In this case we consider an short critical
pair $(u_\sim,v_\sim)$ to be ``unresolved'' if it arises from the 
situation 
\begin{equation}\label{eq:gpscp}
u_\sim \LAS{\circ}{S} w_\sim \RAS{\circ}{S}v_\sim,
\end{equation}
where $w$ is short and $\abs {w} > \abs {u} \geq \abs {v} \geq 1$.
We \emph{resolve} the short critical pair of \eqref{eq:gpscp} by
adding the rules  
$(u_\sim,v_\sim)$ and  $(v_\sim,u_\sim)$. 
Let  $C^\dag(S)$ be the system obtained from $\RAS{\circ}{S}$ by
resolving all short critical pairs of the form \eqref{eq:gpscp}.  
 Note that if $(u_\sim, v_\sim)$ is a short critical pair then
both $u$ and $v$ are strictly short and non-trivial so, $S$ being $2$-monadic,
we have
$|u|=|v|=1$. 
\begin{corollary}\label{cor:gpercyc}
Let $S \sse \Gam^* \times \Gam^*$ be a standard, $2$-monadic, 
 geodesically perfect, Thue system, such that $\GS$ is a group, and 
$C^\dag(S)$ is confluent. 
Then $C^\dag(S)$ satisfies 
\ref{it:C1} and \ref{it:C2} and  is geodesically perfect. 
Moreover two cyclic words $u_\sim$ and $v_\sim$ are conjugate in $\GS$
if and only if there exists a cyclic word $t_\sim$ such that 
\[u_\sim\CRAS{C^\dag(S)}t_\sim\CLAS{C^\dag(S)}.\] 
\end{corollary}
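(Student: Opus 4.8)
The plan is to verify the four assertions (\ref{it:C1}, \ref{it:C2}, geodesic perfection, and the conjugacy criterion) in turn. Conditions \ref{it:C1} and \ref{it:C2}, together with the fact that $C^\dag(S)$ is Thue, I would dispatch directly from the construction. Condition \ref{it:C1} holds because $C^\dag(S)$ is formed from $\RAS{\circ}{S}$ by \emph{adding} rules, so $\RAS{\circ}{S}\subseteq C^\dag(S)$. For \ref{it:C2}, the inherited rules preserve conjugacy (a cyclic $S$-step is a transposition followed by an $S$-step, each of which preserves the conjugacy class), while a resolved rule $(u_\sim,v_\sim)$ comes from $u_\sim \LAS{\circ}{S} w_\sim \RAS{\circ}{S} v_\sim$, making $u$ and $v$ both conjugate to $w$ and hence to each other. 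As noted before the statement, a short critical pair in a $2$-monadic system has $|u|=|v|=1$, so each added rule is length preserving and is inserted in both directions; thus $C^\dag(S)$ has no length-increasing rule and a symmetric length-preserving part, i.e.\ it is Thue.

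For geodesic perfection I would use \refthm{thm:qcg}. Since $S$ is standard, geodesic and Thue, that theorem gives that $\RAS{\circ}{S}$ is C-quasi-geodesic, so iterating length-reducing rules on any cyclic word terminates at a quasi-geodesic. The length-reducing rules of $C^\dag(S)$ coincide with those of $\RAS{\circ}{S}$ (the new rules being length preserving), and in a $2$-monadic system a quasi-geodesic cyclic word is geodesic; hence reduction under $C^\dag(S)$ terminates at a geodesic, giving the first half of the definition. For the second half, let $u_\sim$ and $v_\sim$ be geodesics derived from a common $w_\sim$. The assumed confluence of $C^\dag(S)$ yields $t_\sim$ with $u_\sim \CRAS{C^\dag(S)} t_\sim \CLAS{C^\dag(S)} v_\sim$. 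Since $C^\dag(S)$ is Thue its rules never lengthen a word, while \ref{it:C2} places $t$ in the conjugacy class of $u$, so $|t|\ge|u|$ by minimality; hence $|t|=|u|=|v|$ and both chains use only length-preserving rules. These being symmetric, $u_\sim$ rewrites to $v_\sim$ through $t_\sim$ by length-preserving rules, completing geodesic perfection.

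For the conjugacy criterion I would reproduce the proof of \refcor{cor:csconj}, observing that it uses only the confluence of $C^\dag(S)$ (which is now a hypothesis, so \refthm{thm:CKB} need not be invoked) together with $S$ being standard, confluent and a presentation of a group. The ``if'' direction is immediate from \ref{it:C2}. For ``only if'', conjugacy of $u$ and $v$ gives $x$ with $xu\oi x \DAS{*}{S} v$; lifting each word step via \ref{it:C1} yields $(xu\oi x)_\sim \CDAS{C^\dag(S)} v_\sim$. As $\GS$ is a group and $S$ is standard and confluent, $\oi x x \RAS{*}{S} 1$, and after the transposition $(xu\oi x)_\sim=(u\oi x x)_\sim$ this reduction gives $(u\oi x x)_\sim \CRAS{C^\dag(S)} u_\sim$. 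Hence $u_\sim \CDAS{C^\dag(S)} v_\sim$, and confluence (equivalently Church--Rosser) supplies the common descendant $t_\sim$ with $u_\sim \CRAS{C^\dag(S)} t_\sim \CLAS{C^\dag(S)} v_\sim$.

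I expect the main obstacle to be the second half of geodesic perfection: one must combine the assumed confluence of $C^\dag(S)$ with the non-increasing (Thue) property and the minimality of geodesics to force the common descendant $t_\sim$ to have the same length as the two geodesics, so that only symmetric length-preserving rules can occur. The two supporting facts this leans on — that $2$-monadicity collapses quasi-geodesic to geodesic, and that short critical pairs relate only single letters — are already recorded in the text, but need to be cited with care; there is no circularity in using confluence to derive geodesic perfection, even though the latter is known to imply confluence.
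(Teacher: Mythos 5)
Your proposal is correct and takes essentially the same route as the paper: conditions C1 and C2 by construction, Theorem~\ref{thm:qcg} (together with the fact that in $2$-monadic systems quasi-geodesic cyclic words are geodesic) for termination of length-reducing derivations at geodesics, the confluence--Thue--minimality argument for the length-preserving half of geodesic perfection, and the argument of Corollary~\ref{cor:csconj} (with confluence now a hypothesis, so Theorem~\ref{thm:CKB} is not needed) for the conjugacy criterion. The only difference is organizational: you treat arbitrary pairs of geodesics uniformly via the $2$-monadic collapse of quasi-geodesics, whereas the paper compares a maximal length-reducing derivation against a derivable word of minimal length and disposes of the strictly $S$-short cases ($v=1$ and $v\in\Gam$) by a separate case analysis; both arguments are sound.
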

\begin{proof}
By construction $C^\dag=C^\dag(S)$ satisfies \ref{it:C1} and \ref{it:C2}. Two elements
$u,v\in \Gam^*$ are conjugate if and only if $u_\sim \CDAS{C^\dag} v_\sim$; 
so the final statement holds if $C^\dag$ is confluent. Therefore
it is sufficient to prove that $C^\dag$ is geodesically perfect. 
 
 Consider $w\CRAS{C^\dag}v$ such that $v$ 
has minimal length with this property (so is geodesic)
 and  let $w\CRAS{C^\dag} u $ be some maximal derivation using only length reducing rules from the cyclic rewriting system $C^\dag$. Clearly, 
 $\abs u \geq \abs{v}$; and Theorem \ref{thm:qcg} implies that 
$S$ is C-quasi-geodesic so either $|u|=|v|$ 
or $u$ is strictly $S$-short.  We have to show that we can transform 
 $u$ into $v$ by length preserving rules from $C^\dag$. This is clear, if
 $v$ is not strictly $S$-short, because then $|u|=|v|$, and $C^\dag$ is 
 confluent and Thue.  For $m(S) = 2$, a strictly $S$-short 
 word $v$ is either a letter or the empty word $1$. But if $v=1$ we have $w\RAS*S v$
 because $S$ is a confluent semi-Thue system and $1$ is irreducible
; and it follows from the definitions of $\CRAS{C^\dag}$ and $u$ that $u=1$ as well.  There remains the case 
 $v\in \Gam$. Since $S$ is C-quasi-geodesic we have $\abs u =1$, too. 
 As $C^\dag$ is confluent and Thue we can transform the letter 
$u$ into  $v$, by
applying length preserving rules of $C^\dag$. 
\end{proof} 

\section{Stallings' pregroups and their universal groups}
\label{sec:pregroup}

We now turn to the notion of pregroup in the sense of Stallings,
\cite{Stallings71}, \cite{Stallings87}.
A \emph{pregroup} is a set $P$ with a distinguished element
$\eps$,
equipped with  a partial multiplication 
 $(a,b) \mapsto ab$ which is defined for $(a,b) \in D$, where $D \subseteq P \times
P$, and an involution 
$a \mapsto \ov{a}$, satisfying the following axioms,
for all $a,b,c,d \in P$. (By ``$ab$ is defined'' we mean  that
$(a,b)\in D$.)
\begin{enumerate}[(P1)]
\item  
$a\eps$ and $\eps a$ are defined and
$a \eps = \eps a = a;$
\item  
$\ov{a} a$ and $a \ov{a}$ are defined and $ \ \
\ov{a} a = a \ov{a} = \eps;$
\item  
if $ a b$ is defined, then so is
$\ov{b} \ov{a},$ and
$\ov{a  b} = \ov{b}\,  \ov{a};$
\item  \label{it:P4}
if $a  b$ and $b
c$ are defined, then $(a b) c$ is defined if and only if $a (b
c)$ is defined, in which case
\[ (a b) c =  a (b c);\]
\item   \label{it:P5} 
if $a  b, b  c,$ and $c  d$
are all defined then either $a  b  c$ or $b
c  d$ is defined.
\end{enumerate}
It is shown in \cite{Hoare88}
that (P3) follows from (P1), (P2), and (P4), hence can be omitted.

For $a,b\in P$ we write $ab\in P$, to mean that $ab$ is defined. Also we
use the notation $[ab]$ 
 to indicate that $ab\in P$ and, under the partial multiplication, 
$(a,b) \mapsto [ab]$. This notation is extended recursively to 
products of more than
two elements of $P$: if $w\in P^*$, where the notation has been established
for words shorter than $w$,  and 
$w$ has a factorisation $w=uv$, such that 
$u, v\in P$  and $[u][v]$ is defined, we write $w\in P$ and 
use $[w]$ to denote  the product $[u][v]\in P$. 
(Note though that, for example, $[abc]$ means only that one of  $[ab]c$ 
or $a[bc]$ belongs to $P$. (\emph{cf.} \reflem{lem:key}.))

The set $P$ can be considered as a possibly infinite alphabet. 
The axioms above lead to the following definitions of Thue systems
 $S_\eps$, $S(P)$ and the universal group $U(P)$.

\begin{definition}\label{def:ug}
\begin{enumerate}
\item The system $S_\eps\subseteq P^* \times P^*$ is defined by the following rules: 
\[
\begin{array}{rcll}
\eps & \longrightarrow  & 1 &\mbox{(= the empty word)}\\
ab & \longrightarrow  &[ab] & \mbox{if } \; (a,b) \in D  \\
ab & \longleftrightarrow  &[ac][\ov c b]& \mbox{if }
  \;(a,c), \, (\ov c,b) \in D
\end{array}
\]
\item Let $\Gam = P \sm \oneset{\eps}$. 
The system  $S(P) \subseteq 
\Gamma^* \times \Gamma^*$ is  defined as follows: 
\[
\begin{array}{rcll}
ab & \longrightarrow & 1  & \mbox{if } \; (a,b) \in D \mbox{ and } [ab]=\eps.\\
ab & \longrightarrow  &[ab] & \mbox{if } \; (a,b) \in D  \mbox{ and } [ab] \neq \eps.  \\
ab & \longleftrightarrow  &[ac][\ov b]& \mbox{if }
  \;(a,c), \, (\ov c,b) \in D, \mbox{ and } (a,b) \notin D. 
\end{array}
\]
We say that $S(P)$ is the Thue system \emph{associated with $P$}. 
\item The \emph{universal group} $U(P)$ of a pregroup $P$ is the group
\[U(P)=\Gam^*/\set{\ell = r}{(\ell,r) \in S(P)}. \]
\end{enumerate}
\end{definition}
Tietze transformations may be applied to the presentation $P^*/S_\eps$ to
give the presentation $\Gam^*/S(P)$; so $U(P)\cong P^*/S_\eps$. 

A \emph{reduced word} is an 
element $p_1\cdots p_n$ of $P^*$ 
 such that all $p_i\in \Gam$ and  $[p_ip_{i+1}]
\notin P$, for $i$ from $1$ to $n-1$. 

The relationships between a pregroup, these rewriting systems
and the  universal group
rest on several key lemmas, the most important of which we 
restate here for completeness. 

\begin{lemma}[{\cite{Stallings71}}]\label{lem:key}
Let $a,b,c,d, g,h \in P$. 
\be[1.)]
\item\label{it:keyi} If $ab\in P$ then $[ab]\ov b\in P$ and $[ab\ov b]=a$. 
\item\label{it:keyii} If $ab\notin P$ but $ac$ and $\ov c b\in P$ then 
$[ac][\ov c b]\notin P$.
\item\label{it:keyiii} If $abc$ is a reduced word and $a \ov d,db\in P$ then
$[a \ov d][d b]c$ is a reduced word. 
\item\label{it:keyiv} If $ab\notin P$ but $ac$, $\ov c b$, $bd\in P$ then
$\ov c bd\in P$. (That is $[\ov cb]d\in P$ from which it 
follows that $[\ov c[bd]]=[[\ov cb]d]$.) 
\item\label{it:keyv}
If 
$gb,\ov b h, gbc,\ov c \ov b h \in P$, but $gh \not \in P$ 
then $bc \in P$. 
\ee
\end{lemma}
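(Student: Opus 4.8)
The plan is to prove the five parts in order, reducing each to the associativity axiom (P4) and the ``one of two triples is defined'' axiom (P5), and feeding the earlier parts into the later ones. The recurring tool is that if $xy$ and $yz$ are both defined then (P4) makes $(xy)z\in P$ equivalent to $x(yz)\in P$, with equal values; and if three consecutive products are defined, (P5) produces two composite triples of which at least one is defined, and from each I read a genuine product via (P4). The only care needed is to choose the four elements so that neither triple collapses to $\eps$ under both bracketings (which would carry no information). I will use both halves of \ref{it:keyi} throughout, namely $[xy]\ov y=x$ and its mirror $\ov x[xy]=y$ (the latter obtained by applying the involution).

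Part \ref{it:keyi} is immediate: since $ab\in P$ and $b\ov b=\eps\in P$ by (P2), (P4) applies to $(a,b,\ov b)$, and as $a(b\ov b)=a\eps=a$ is defined we get $[ab]\ov b\in P$ with $[ab\ov b]=a$. Part \ref{it:keyii} needs no (P5): the mirror of \ref{it:keyi} gives $c[\ov cb]=b$, so if $[ac][\ov cb]\in P$ then (P4) on $(a,c,[\ov cb])$ --- both $ac$ and $c[\ov cb]=b$ being defined --- forces $a(c[\ov cb])=ab\in P$, contradicting $ab\notin P$.

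Part \ref{it:keyiv} is the first use of (P5): take $[ac]\ov c=a$ from \ref{it:keyi} and apply (P5) to $([ac],\ov c,b,d)$, whose three products $[ac]\ov c=a$, $\ov cb$, $bd$ are defined. It yields $[ac]\ov cb$ or $\ov cbd$ defined; the first, by (P4) on $([ac],\ov c,b)$, would make $ab$ defined, contradicting $ab\notin P$, so $\ov cbd\in P$, and (P4) on $(\ov c,b,d)$ gives the asserted equality $[\ov c[bd]]=[[\ov cb]d]$. For part \ref{it:keyiii}, the letters are non-trivial because $[a\ov d]=\eps$ or $[db]=\eps$ would give $a=d$ or $d=\ov b$, whence $db=ab$ or $a\ov d=ab$, contradicting $ab\notin P$; the non-membership $[a\ov d][db]\notin P$ is just \ref{it:keyii} with $c:=\ov d$; and for $[db]c\notin P$ I would suppose $[db]c\in P$, put $e=[db]$ (so $\ov de=b$ by the mirror of \ref{it:keyi}), and apply (P5) to $(a,\ov d,e,c)$ --- products $a\ov d$, $\ov de=b$, $ec$ all defined --- so that $a\ov de$ or $\ov dec$ is defined, whence (P4) gives $ab\in P$ or $bc\in P$, both impossible since $abc$ is reduced.

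The main obstacle is part \ref{it:keyv}. Write $p=[gb]$, $q=[\ov bh]$; since $gb,\ov bh\in P$ and $bc$ is not yet known to be defined, the hypotheses $gbc,\ov c\ov bh\in P$ read as $pc\in P$ and $\ov cq\in P$, and I set $v=[\ov cq]$. From \ref{it:keyi} I have $\ov gp=b$, $p\ov b=g$ and $cv=q$. I would apply (P5) to $(\ov g,p,c,v)$, whose three consecutive products $\ov gp=b$, $pc$, $cv=q$ are defined: it gives $\ov gpc$ or $pcv$ defined, and (P4) turns the first into $bc=(\ov gp)c\in P$ and the second into $pq=p(cv)\in P$. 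Thus unconditionally $bc\in P$ or $pq\in P$. To finish I rule out the second: if $pq\in P$ then (P4) on $(p,\ov b,h)$ --- legitimate as $p\ov b=g$ and $\ov bh=q$ are defined and $p(\ov bh)=pq\in P$ --- gives $(p\ov b)h=gh\in P$, contradicting $gh\notin P$; hence $bc\in P$. The subtle point is exactly this last implication $pq\in P\Rightarrow gh\in P$: the natural quadruples relating $p,q$ to $g,h$ all route through $\ov bb=\eps$ and produce triples that collapse to $\eps$ under both bracketings, carrying no information, so (P5) is useless there and only the bare associativity step (P4) on $(p,\ov b,h)$ succeeds.
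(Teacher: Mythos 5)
Your proof is correct and follows essentially the same route as the paper's: part \ref{it:keyi} and \ref{it:keyii} via (P4), and parts \ref{it:keyiii}--\ref{it:keyv} by applying (P5) to exactly the same four-element products ($a\,\ov d\,[db]\,c$, then $[ac]\,\ov c\,b\,d$, then $\ov g\,[gb]\,c\,[\ov c\ov bh]$) and ruling out the unwanted branch with (P4). The only difference is that you spell out details the paper leaves implicit, such as the non-triviality of the letters in part \ref{it:keyiii} and the explicit (P4) step showing $[gb][\ov bh]=gh\notin P$ in part \ref{it:keyv}.
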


\begin{proof}
\be[1.)]
\item Apply (P4) to the triple $a, b, \ov b$.
\item Use \ref{it:keyi} and apply (P4) to the 
triple $[ac], \ov c$ and $b$.
\item From the above $[a \ov d][db]$ is reduced and $\ov d db\in P$. 
If $dbc\in P$ 
then consider the four element product $a \ov d [d b] c$. From 
 (P5), either $ab\in P$ or $b c\in P$, a contradiction. 
\item Consider the four elements $[ac]$, $\ov c$, $b$ and $d$, of $P$. 
The product of each adjacent pair is defined, so (P5) implies
either $ab=[ac][\ov c b]\in P$, or $\ov c b d\in P$. 
\item
 Consider the product of four elements $\ov g [gb] c [\ov c \ov b h]$. 
 By hypothesis we have $ gbc, \ov b h \in P$. Moreover, 
  $[gb] c [\ov c \ov b h]= gh \notin P$. Hence, by (P5) we conclude 
  $\ov g [gb] c = [bc] \in P.$
\ee 
\end{proof}
As a consequence of Lemma \ref{lem:key}.\ref{it:keyiii} and \ref{it:keyiv} 
the set of reduced words coincides with the set of $S(P)$-geodesic and
 the set of $S_\eps$-geodesic words.  

The length preserving rule
$\longleftrightarrow$ of $S_\eps$ is the length $2$ case of Stallings'
\emph{interleaving}  relation $\approx$
defined on     
words in $P^*$ as follows.  
If $a_i, c_i\in P$, for $i=1, \ldots n$, and $\ov c_{i-1} a_i$, $a_ic_i$ and $\ov c_{i-1} a_i
c_i\in P$ with $c_0=c_n=\eps$, then 
\[a_1  \cdots a_n \approx b_1 \cdots b_n,\]
where $b_i=[\ov c_{i-1}a_ic_i]$.
Stallings used \reflem{lem:key} to show that interleaving is an
equivalence relation on reduced words
 and 
this equivalence relation is central to the proof of 
Theorem \ref{thm:stall}.\ref{stallii} in \cite{Stallings71}.  
Another approach is taken in \cite{ddm10}, based on  the following lemma,
which is again proved using \reflem{lem:key}.  

\begin{lemma}\label{lem:sconfl}
The Thue system $S_\eps$ is strongly confluent.
\end{lemma}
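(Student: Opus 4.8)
The plan is to prove strong confluence by the usual reduction to local peaks: it suffices to check that every peak $y \leftarrow x \to z$ formed by two one-step rewrites can be closed by some $w$ with $y \to^{\le 1} w$ and $z \to^{\le 1} w$. Since every left-hand side of $S_\eps$ has length at most two, the two rewritten factors of $x$ are either disjoint or overlap in a single letter, so the whole analysis localises to a factor of $x$ of length at most three. Disjoint redexes act on independent positions and commute, producing $w$ in exactly one step on each side (the only care needed is in tracking positions when a length-reducing rule shifts the other redex, which never affects its applicability). Overlaps involving $\eps \to 1$ are immediate from (P1): for example $\eps b \to b$ agrees with $\eps b \to [\eps b]=b$, while the interleaving $\eps b \leftrightarrow c[\ov c b]$ is closed by the single reduction $c[\ov c b]\to b$.

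The remaining peaks involve the multiplication rule $ab\to[ab]$ and the interleaving rule $ab\leftrightarrow[ac][\ov c b]$, applied either to one factor or to two factors overlapping in a letter of a length-three word $abc$, and I would dispatch each by a single case split, governed by (P4), on which products lie in $P$, using \reflem{lem:key}. Applied to the same factor, multiplication against interleaving is immediate: \reflem{lem:key}.\ref{it:keyi} gives $[ac]\ov c=a$, so (P4) yields $[ac][\ov c b]=[ab]$ and the single reduction $[ac][\ov cb]\to[ab]$ closes the peak. For two multiplications $[ab]c\leftarrow abc\to a[bc]$, axiom (P4) says $[ab]c\in P$ iff $a[bc]\in P$; if defined both sides reduce to $[abc]$, and if not, \reflem{lem:key}.\ref{it:keyi} turns $[ab]c$ into $a[bc]$ by one interleaving through $\ov b$. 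For a multiplication against an interleaving, $[ab]c\leftarrow abc\to a[bd][\ov d c]$, I split on whether $a[bd]\in P$: in the defined case a reduction of $a[bd]$ and an interleaving of $[ab]c$ both reach $[[ab]d][\ov d c]$; in the undefined case one first argues $bc\in P$ (otherwise the element $abc$ would possess reduced forms of different length, which is impossible in $U(P)$), whereupon (P4) gives $[bd][\ov dc]=[bc]$ and both reducts meet at $a[bc]$ in one step.

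The heart of the matter, and the step I expect to be the main obstacle, is the interaction of the interleaving rule with itself: two interleavings $[bd][\ov dc]\leftarrow bc\to[bd'][\ov{d'}c]$ of a single factor, and the corresponding overlap spread over three letters. These are exactly the instances of \emph{transitivity of interleaving}, which by the remark following \reflem{lem:key} is the content of \reflem{lem:key}.\ref{it:keyv}, together with \reflem{lem:key}.\ref{it:keyiv} for the re-association needed when the overlap straddles three letters. The real difficulty is not the equivalence but the strong-confluence budget: the join must be realised in \emph{one} step on each side, so one must exhibit a single interleaving parameter (morally $[\ov d d']$) linking the two reducts directly, or, when that product is undefined, route the join through the reduced form $[bc]$, using \reflem{lem:key}.\ref{it:keyv} to guarantee the intermediate products exist. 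The length observation used above — that reduced words for the same element of $U(P)$ have equal length, itself a consequence of \reflem{lem:key}.\ref{it:keyiii}--\ref{it:keyiv} — recurs here to rule out length-mismatched false peaks.
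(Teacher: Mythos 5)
Your attempt must stand on its own here, since the paper itself gives no argument for this lemma (it defers to \cite{ddm10}, saying only that the proof uses \reflem{lem:key}). Most of your proof is sound: the one-step-peak framework, the disjoint and $\eps$ cases, multiplication against interleaving on a common factor (via \reflem{lem:key}.\ref{it:keyi} and (P4)), the two-multiplications peak $[ab]c \LAS{}{S_\eps} abc \RAS{}{S_\eps} a[bc]$, the multiplication-versus-interleaving peak with its dichotomy on $a[bd]\in P$, and the two-interleavings-on-one-factor case, where your dichotomy on $\ov d d'$ and appeal to \reflem{lem:key}.\ref{it:keyv} do check out. (Your use of ``reduced forms of the same element have equal length'' is admissible, as the paper records before this lemma that reduced words coincide with geodesics; alternatively that step follows directly from (P5) applied to $a,b,d,[\ov d c]$.)

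The genuine gap is the case you yourself flag as the heart of the matter: two interleavings overlapping in one letter, $y=[ae][\ov e b]c \LAS{}{S_\eps} abc \RAS{}{S_\eps} a[bd][\ov d c]=z$. Neither of your mechanisms closes it in general. A single interleaving cannot carry $y$ to $z$, since they differ in all three positions. If $\ov e bd\in P$, both sides interleave to the common word $[ae][\ov e bd][\ov d c]$ (your key-\ref{it:keyiv} re-association), but in the complementary sub-case $\ov e bd\notin P$ — which by (P5) forces $ab\in P$ and $bc\in P$ — routing through reduced forms fails: concretely, in $A*_H B$ with $a,e\in A\sm H$, $b\in H$, $b\neq 1$, $d,c\in B\sm H$, the only reductions available are $y\to[ab]c$ and $z\to a[bc]$, which are distinct words, and one checks that the interleavings of $y$ and of $z$ share no common value except $abc$ itself. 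What closes this peak — and what your proposal never invokes — is that the symmetric rule is reversible: interleaving $y$ with parameter $\ov e$ and $z$ with parameter $\ov d$ returns both to $w=abc$ in one step, and strong confluence permits taking $w=x$. The same observation disposes of \emph{every} peak formed by two interleavings, including your single-factor case, with no appeal to \reflem{lem:key}.\ref{it:keyv} at all; so the case you identified as the main obstacle is in fact trivial, and the genuinely substantive peaks are the mixed multiplication/interleaving ones, which you handle correctly.
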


Parts 
\ref{stallii} and \ref{stalliii} of the following theorem are from  Stallings \cite{Stallings71}. Part \ref{stalliv}  is from  
\cite{ddm10}. 

\begin{theorem}[\cite{Stallings71},\cite{ddm10}]\label{thm:stall}
Let $P$ be a pregroup. Then the following hold.
  \begin{enumerate}[1)]
 \item\label{stallii} $P$ embeds into $U(P)$.
 \item\label{stalliii} If $g$ and $h$ are reduced words  $P^*$ then
$g=_{U(P)} h$ \IFF $h$ is an interleaving of $g$. 
  \item\label{stalliv} $S(P)$ is a geodesically perfect Thue system.
\end{enumerate}
\end{theorem}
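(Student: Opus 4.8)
The plan is to derive all three parts from the strong confluence of $S_\eps$ (\reflem{lem:sconfl}) together with the identification of reduced words as geodesics already recorded after \reflem{lem:key}, postponing the genuinely combinatorial work to the comparison of the length-preserving equivalence with Stallings' interleaving.

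First I would set up normal forms for $S_\eps$. Since $S_\eps$ is strongly confluent it is confluent and Church--Rosser (facts from \refsec{sec:rs}). Writing $S_\eps = R \cup T$ with $R = \{\eps \to 1,\ ab \to [ab]\}$ the length-reducing part and $T$ the symmetric length-preserving rule $ab \leftrightarrow [ac][\ov c b]$, the rules of $R$ strictly shorten a word whenever they apply, and a word is $R$-irreducible precisely when it contains no $\eps$ and no adjacent defined product, i.e.{} precisely when it is reduced. As reduced words are exactly the $S_\eps$-geodesics, $S_\eps$ is geodesic, and combined with Church--Rosser the equivalence in \refsec{sec:ts} gives that $u \DAS*{S_\eps} v$ holds iff $u \RAS*R \wh u \DAS*T \wh v \LAS*R v$ for reduced $\wh u, \wh v$. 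Thus $S_\eps$ is geodesically perfect and every element of $U(P) \cong P^*/S_\eps$ has a reduced representative, unique up to $\DAS*T$.

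Next comes part \ref{stalliii}. For reduced $g, h$ we have $g =_{U(P)} h$ iff $g \DAS*{S_\eps} h$ iff $g \DAS*T h$ (both being reduced). It remains to identify $\DAS*T$ on reduced words with interleaving $\approx$. One direction is routine: a single $T$-step is the special interleaving in which all but one connecting element $c_i$ equals $\eps$, and conversely an interleaving decomposes into a left-to-right sequence of such swaps, so that $\approx$ is contained in $\DAS*T$ while $\DAS*T$ is contained in the transitive closure of $\approx$. The substantive point --- and the main obstacle --- is that $\approx$ is \emph{already} symmetric and transitive, so that $\approx$ equals $\DAS*T$ and the forward implication $g \DAS*T h \Rightarrow h \approx g$ holds. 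This is exactly where \reflem{lem:key} is indispensable: composing two interleavings and reconciling their connecting sequences requires \reflem{lem:key}.\ref{it:keyiii}, \ref{it:keyiv} to keep intermediate words reduced and \reflem{lem:key}.\ref{it:keyv} to produce the composite connecting elements, and this bookkeeping is the technical heart of the argument.

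Part \ref{stallii} then follows quickly: map $a \mapsto [a]$ for $a \in \Gam$ and $\eps \mapsto 1$. A letter is a reduced word of length one whose only interleaving is itself (the constraints $c_0 = c_1 = \eps$ force $b_1 = [\eps a \eps] = a$), so by part \ref{stalliii} distinct letters stay distinct and no letter equals the empty word, which has a different geodesic length; compatibility with the partial product is built into the rule $ab \to [ab]$. Finally, for part \ref{stalliv} I would transfer from $S_\eps$ to $S(P)$ through the Tietze equivalence $P^*/S_\eps \cong \Gam^*/S(P)$: reduced words lie in $\Gam^*$, the $S(P)$-rules are exactly the $\eps$-eliminated forms of the $S_\eps$-rules (the rule $ab \to [ab]$ splitting according to whether $[ab] = \eps$, and $ab \leftrightarrow [ac][\ov c b]$ becoming $ab \leftrightarrow [ac][\ov b]$), and reduced words are again the $S(P)$-geodesics. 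Since the $\eps$-free reductions of the two systems coincide, confluence and the geodesic property pass to $S(P)$, so $S(P)$ is geodesically perfect.
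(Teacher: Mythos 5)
Your overall architecture is sound and, for parts \ref{stallii} and \ref{stalliii}, runs parallel to the paper's: the paper likewise obtains \ref{stalliii} from \ref{stalliv} together with \reflem{lem:key}, deferring (as you do) the transitivity of interleaving to Stallings-style bookkeeping, and your embedding argument via length-one interleavings is a legitimate variant of the paper's appeal to strong confluence (\reflem{lem:sconfl}), under which single letters and the empty word are $S_\eps$-irreducible and hence pairwise inequivalent by Church--Rosser. The decisive problem is part \ref{stalliv}, on which everything else in your chain \ref{stalliv} $\Rightarrow$ \ref{stalliii} $\Rightarrow$ \ref{stallii} rests.

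There you argue: $R$-irreducible words are exactly the reduced words, and ``reduced words are exactly the $S_\eps$-geodesics, as recorded after \reflem{lem:key}'', hence $S_\eps$ is geodesic. But that recorded sentence is not an independently established prior result; it is logically equivalent to the geodesic half of \ref{stalliv} itself. Since $R$-irreducible $=$ reduced, the assertion ``$S_\eps$ (or $S(P)$) is geodesic'' says precisely ``every reduced word is geodesic'', which is the nontrivial direction of that remark; the paper states the remark without proof, and its actual justification is the very argument the paper supplies ``for completeness'' in the proof of \ref{stalliv}. So your proof of the geodesic property is circular: it cites an assertion whose only available proof is the statement being proven. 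What is missing is the core combinatorial fact that a symmetric rule applied to a reduced word can never create a length-reducing redex. In the paper's proof this is done by taking a shortest offending sequence of symmetric rules: its last step turns a reduced factor $abcd$ into $a[b\ov x][xc]d$, and then \reflem{lem:key}.\ref{it:keyii} excludes a reduction of the middle pair $[b\ov x][xc]$ while \reflem{lem:key}.\ref{it:keyiii} excludes reductions of $a[b\ov x]$ and $[xc]d$ --- a contradiction; equivalently, a single length-preserving step preserves reducedness. Nothing in your proposal supplies this argument (the one place it could hide, the deferred ``bookkeeping'' for transitivity of $\approx$, is also left unproven), so the geodesic property, and with it the rest of the proposal, is unsupported. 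Supplying that one lemma --- symmetric steps preserve reducedness, via \reflem{lem:key}.\ref{it:keyii} and \ref{it:keyiii} --- would repair the proof and would in fact reproduce the paper's own argument.
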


\begin{proof}
\ref{stallii} is a  direct consequence of \reflem{lem:sconfl} 
and the remark following the proof of Proposition \ref{prop:grim}. \ref{stalliii} follows from  \ref{stalliv}
and \reflem{lem:key}. 
 The proof of \ref{stalliv} is given in  \cite{ddm10}: 
 however, 
 for completeness we give a proof.
  Consider 
  a word $u = a_1 \cdots a_n$ with $a_i \in \Gam$ such that $a_ia_{i+1}$ is not defined in $P$ for $1\leq
i<n$. Assume that after  a sequence of applications of symmetric rules, we can apply a 
length reducing one.  We have to show that some length reducing rule applies 
to $u$. We may assume that the sequence of applications of symmetric rules
is not empty, but as short as possible.  
 The corresponding word contains
a factor $abcd$ with $a,b,c,d \in \Gam$ and neither $ab$, $bc$  nor $cd$ 
 defined in $P$. Applying the last symmetric rule yields  
$a[b \ov x] [x c]d$. The length reducing rule 
cannot then apply to $[b \ov x] [x c]$, since this is not defined, by
\reflem{lem:key}.\ref{it:keyii},  and so must apply to $a[b \ov x]$ or 
$[x c]d$. In both cases we have a contradiction to 
\reflem{lem:key}.\ref{it:keyiii}. 
\end{proof}

\begin{remark}\label{rem:frida}
Every group $G$ is the universal pregroup of some 
pregroup $P$. Indeed, $G = U(G)$. 
Moreover, \refthm{thm:stall} tells us that  every pregroup $P$ can be defined as a subset $P \sse G$ inside a group 
$G$ such that $1 \in P$, $a\in P$ implies $\oi a \in P$, and $P$ satisfies the
axiom (P5). Having such a subset the domain $D$ becomes $D = \set{(a,b) \in P\times P}{ab \in P}$. 
\end{remark}

\subsection{Amalgamated products and HNN-extensions} \label{hnnag}
The guiding example of an universal group in the sense of Stallings is the amalgamated product
$G= A\ast_H B$ of two groups over a common subgroup $H = A\cap B$. 
In this case $P= A\cup B$ forms a pregroup with 
$U(P) = G$. In this case, for  $a,b \in P$, the product $ab$ is defined in $P$ 
if and only if 
$a, b \in A$ or $a, b \in B$. The verification of (P5) is straightforward. 

The other obvious 
example of an universal group  is the case where $G = \HNN(H, t;\;  t^{-1}At =B)$ is an HNN-extension over 
two isomorphic subgroups $A,B$ in some base group $H$. (That is
there is an isomorphism $\phi:A\longrightarrow B$ and ``$t^{-1}At =B$'' denotes
the set of relations of the form $t^{-1}at=a\phi$, for all $a\in A$.) In this case we can choose $P= H \cup Ht^{-1}H \cup HtH$. 
Again, the verification of (P5) is straightforward. 

\subsection{Fundamental groups of graph of groups} \label{gog}
The notion of the  fundamental group of a graph of groups generalises 
amalgamated product and
HNN-extension to a much broader class. The concept of a \emph{graph of groups}
is due to Serre and the development of Bass-Serre theory has been a major achievement in modern group theory. We refer to the books  \cite{serre80},
\cite{bogopol08}, and to 
\cite{Rimlinger87a} for the background.

A \emph{virtually free group} is a group $G$ having a free subgroup of finite index. They are related to graphs of groups as follows. 
\begin{proposition}
\label{prop:grim}
Let $G$ be a finitely generated group. The following conditions are
equivalent.
\begin{enumerate}
\item $G$ is the fundamental group of a finite connected graph of groups where 
all vertex groups are finite. 
\item $G$ is the universal group of some finite pregroup.
\item $G$ can be presented by some finite geodesic system.
\item $G$ is virtually free.
 \end{enumerate}
\end{proposition}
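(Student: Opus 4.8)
The plan is to prove the four conditions equivalent by closing the cycle $(2)\Rightarrow(3)\Rightarrow(4)\Rightarrow(1)\Rightarrow(2)$, invoking classical Bass--Serre theory for the two implications that do not involve rewriting, and using the machinery already developed for the other two. The implication $(2)\Rightarrow(3)$ is essentially immediate: if $G=U(P)$ with $P$ finite, then $\Gam=P\setminus\{\eps\}$ is finite and the associated system $S(P)$ has only finitely many rules, since its left- and right-hand sides are words of length at most two over the finite alphabet $\Gam$. By \refthm{thm:stall}.\ref{stalliv}, $S(P)$ is geodesically perfect, in particular geodesic, and $\Gam^*/S(P)=U(P)=G$. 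Hence $G$ is presented by a finite geodesic system.

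For $(1)\Rightarrow(2)$ I would use the correspondence between graphs of groups and pregroups, following Rimlinger \cite{Rimlinger87a}: from a finite connected graph of groups one builds a pregroup $P$, generalising the constructions $P=A\cup B$ for amalgams and $P=H\cup Ht^{-1}H\cup HtH$ for HNN-extensions recalled in \refsec{hnnag}, whose universal group is the fundamental group. The essential point is that when the graph is finite and all vertex groups are finite, this $P$ is a finite set, so $G=U(P)$ with $P$ finite.

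The implication $(4)\Rightarrow(1)$ is the deep classical direction, that a finitely generated virtually free group is the fundamental group of a finite graph of finite groups; I would cite the Karrass--Pietrowski--Solitar theorem (equivalently, the structure theory flowing from Stallings' theorem on ends), referring to \cite{serre80,bogopol08,Rimlinger87a}, rather than reprove it here. The converse half, needed only to make $(1)\Leftrightarrow(4)$ fully transparent, is the easy one: a fundamental group of a finite graph of finite groups acts on its Bass--Serre tree with finite stabilisers and finite quotient graph, so a torsion-free subgroup of finite index acts freely and is therefore free.

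This leaves $(3)\Rightarrow(4)$, which I expect to be the main obstacle and the place where the rewriting viewpoint does real work. The strategy is to deduce that a finite geodesic system forces the word problem of $G$ to be context-free, and then apply the Muller--Schupp theorem identifying the finitely generated groups with context-free word problem as precisely the virtually free groups. Concretely, writing $R$ for the length-reducing part of the finite system $S$, geodesic-ness gives that $w=_G 1$ \IFF $w\RAS{*}{R}1$, since the empty word is the unique geodesic representing the identity; the finiteness of $S$ and the bound on rule lengths should then let one recognise this reduction by a pushdown automaton. Making that recognition precise---in particular controlling the nondeterministic choice of length-reducing derivations with only bounded (stack) memory---is the technical heart, and the point at which one must genuinely exploit the structure of a geodesic system rather than its bare definition.
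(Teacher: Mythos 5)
The first thing to note is that the paper itself offers no proof of this proposition: it simply quotes it from \cite[Cor.~8.7]{ddm10} and attributes the ingredients to \cite{Rimlinger87a}, \cite{Karrass} and \cite{ms83}. Your cycle $(2)\Rightarrow(3)\Rightarrow(4)\Rightarrow(1)\Rightarrow(2)$ uses exactly those ingredients in exactly those roles --- Rimlinger for $(1)\Rightarrow(2)$, Karrass--Pietrowski--Solitar for $(4)\Rightarrow(1)$, Muller--Schupp for $(3)\Rightarrow(4)$ --- and your one self-contained step, $(2)\Rightarrow(3)$, is correct: $S(P)$ has finitely many rules because all its sides have length at most two over the finite alphabet $\Gam=P\sm\oneset{\eps}$, it presents $U(P)$ by \refdef{def:ug}, and it is geodesic by \refthm{thm:stall}. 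So your architecture is the paper's decomposition, carried out rather than cited.

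The genuine gap is in $(3)\Rightarrow(4)$, the one hard implication you attempt rather than cite. Your first step is fine: the empty word is the unique geodesic representing $1$, so $w=_G 1$ if and only if $w\RAS{*}{R}1$. What fails is the next claim, that finiteness of $S$ and the bound on rule lengths ``should'' make $\set{w\in\Gam^*}{w\RAS{*}{R}1}$ recognisable by a pushdown automaton. For a general finite length-reducing system this is simply false: when a redex at the top of a stack is replaced, the letters of the right-hand side must be re-examined against what lies beneath them, and these pending letters can accumulate without bound, so the natural machine is a \emph{shrinking two-pushdown automaton}; the associated language classes (Church--Rosser languages, growing context-sensitive languages) contain languages that are not context-free. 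Hence context-freeness of the word problem cannot be extracted from ``bare'' length reduction plus finiteness --- the geodesic hypothesis must be woven into the automaton construction itself, and doing so is not a detail to be checked but is essentially the published theorem of \cite{GilHHR07} (equivalently, the content of \cite[Cor.~8.7]{ddm10}). You correctly identify this as the technical heart, but you leave it open, so as it stands your proposal establishes three of the four implications and leaves the fourth as a conjecture-with-a-plan whose first idea (a pushdown automaton from bounded rule lengths alone) would not survive. The pragmatic repair is to treat $(3)\Rightarrow(4)$ the way you already treat $(4)\Rightarrow(1)$: cite it, via \cite{GilHHR07} or \cite{ms83} together with \cite{ddm10} --- which is precisely what the paper does.
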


\refprop{prop:grim} is taken {}from  \cite[Cor. 8.7]{ddm10} and combines several results from the literature.  It follows from \cite{Rimlinger87a}, \cite{ddm10}, \cite{Karrass}, and \cite{ms83}.

\section{Conjugacy in  universal groups}\label{sec:cpug}
We shall apply \refthm{thm:CKB} and Corollary \ref{cor:gpercyc}  to the universal group of a pregroup and in 
particular to the conjugacy problem. 
For this we fix a pregroup $P$, we let $U(P)$ be its universal group; and 
denote by  $S_\eps$ and $S=S(P)$ the Thue systems of  \refdef{def:ug}.
Let $C^\dag(S_\eps)$ and $C^\dag(S)$ be the cyclic rewriting systems
defined by resolving short critical pairs in the sense of Section \ref{sec:cgp}.

A \emph{cyclically reduced word} is a cyclic word 
over $\Gam^*$
which 
 is geodesic with respect to the rewriting system $C^\dag(S)$. 
We also refer to words $w\in w_\sim$ as cyclically reduced
if $w_\sim$ is cyclically reduced. In particular all elements of $\Gam$
are cyclically reduced. 

\begin{lemma}\label{lem:cyclth}
Let $g\in P^*$ be a cyclically reduced word and let $h\in P^*$ be 
a word such that $h_\sim$ is obtained from $g_\sim$ by applying a sequence
of length preserving rules of $C^\dag(S_\eps)$. Then $h_\sim$ is 
cyclically reduced and $|h|=|g|$. 
\end{lemma}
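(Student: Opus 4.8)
The plan is to separate the routine bookkeeping (length and conjugacy class) from the single genuinely pregroup-theoretic point, namely that no $\eps$ is ever created. By the definition of cyclically reduced in this section, $g_\sim$ being cyclically reduced means exactly that $|g|$ is minimal among the lengths of all words representing elements conjugate to $g$ in $U(P)$. Since every rule applied is length preserving, $|h|=|g|$ is immediate. Moreover each length preserving rule of $C^\dag(S_\eps)$ preserves the conjugacy class: an interleaving rule $ab\leftrightarrow[ac][\ov cb]$ replaces a factor by an element equal to it in $U(P)$ (since $[ac][\ov cb]=ac\ov cb=ab$ there), while a resolved single-letter rule rewrites $x_\sim$ to $y_\sim$ with $x,y$ conjugate in $U(P)$ by condition \ref{it:C2}; applied cyclically, either leaves the conjugacy class unchanged. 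Hence $h$ is conjugate to $g$ with $|h|=|g|$, and the conclusion will follow as soon as we know that $h\in\Gam^*$, so that it is a legitimate cyclic word to which the definition of cyclically reduced applies.

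So the crux is to verify that applying length preserving rules to $g_\sim$ never introduces the letter $\eps$. I would argue by induction on the length of the rewriting sequence, carrying the invariant that the current word $w$ lies in $\Gam^*$, has $|w|=|g|$, and is conjugate to $g$. Granting the invariant, $w$ is a conjugate of $g$ of minimal length, hence $w_\sim$ is geodesic with respect to $C^\dag(S)$, i.e.\ cyclically reduced, and therefore reduced in the sense of \refsec{sec:pregroup}: a non-reduced word (including the wrap-around pair) would admit a length reducing rule of $S$ producing a conjugate of strictly smaller length, contradicting minimality. Thus every cyclically adjacent pair $a,b$ of $w$ satisfies $(a,b)\notin D$.

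The inductive step splits into two cases. If $|w|=1$, the only applicable rules are the resolved single-letter rules $x_\sim\to y_\sim$ with $x,y\in\Gam$, so the result is again a letter of $\Gam$ and the invariant is trivially restored. Otherwise an interleaving rule is applied to a cyclic factor $ab$ with $(a,b)\notin D$, producing $[ac][\ov cb]$ where $(a,c),(\ov c,b)\in D$; this is where I expect the main obstacle to lie, and it is settled by \reflem{lem:key}. If $[ac]=\eps$ then $c=\ov a$, forcing $[\ov cb]=[ab]\in P$ and hence $(a,b)\in D$, a contradiction; symmetrically $[\ov cb]=\eps$ forces $c=b$ and again $(a,b)\in D$. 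Therefore both $[ac]$ and $[\ov cb]$ lie in $\Gam$, so the new word is again over $\Gam^*$ and the invariant (length, conjugacy class, membership in $\Gam^*$) is restored. For completeness one may also note, via \reflem{lem:key}.\ref{it:keyii} and \reflem{lem:key}.\ref{it:keyiii} exactly as in the proof of \refthm{thm:stall}.\ref{stalliv}, that the new word is reduced, though this is in any case forced by minimality of its length.

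Applying the invariant to the final word of the sequence yields $h\in\Gam^*$ with $|h|=|g|$ and $h$ conjugate to $g$. Since $|g|$ is minimal in the conjugacy class, so is $|h|$, whence $h_\sim$ is geodesic with respect to $C^\dag(S)$, that is, cyclically reduced, with $|h|=|g|$ as required. The essential difficulty is thus concentrated entirely in the single-step $\Gam$-membership argument above; the length and conjugacy statements, and the final minimality deduction, are formal consequences of the definitions.
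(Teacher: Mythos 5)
Your proposal is correct, but it follows a genuinely different route from the paper's proof. The paper argues \emph{locally}: it reduces by induction to a single rule application, and for $|g|\ge 2$ takes an interleaving step $g_1g_2\to[g_1c][\ov c g_2]$ and invokes \reflem{lem:key}, parts \ref{it:keyii} and \ref{it:keyiii}, to show that the square of the resulting word is a reduced word, concluding from this that the result is cyclically reduced (the case $|g|=1$ is dispatched separately). You argue \emph{globally}: since cyclically reduced means geodesic, i.e.\ of minimal length among conjugates in $\Gam^*$, and since every length preserving rule of $C^\dag(S_\eps)$ preserves both length and conjugacy class, minimality is inherited for free, and the entire proof collapses to the one verification that no letter $\eps$ is ever created; you settle that with elementary cancellation ($[ac]=\eps$ forces $c=\ov a$, whence $(a,b)=(\ov c,b)\in D$, contradicting that a minimal-length conjugate has no multipliable cyclic pair). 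What each approach buys: the paper's argument produces en route the stronger local fact that $h^2$ is reduced --- exactly the form in which \reflem{lem:cyclth} is used later, e.g.\ in the proof of \reflem{lem:cleave} --- but its final inference (square reduced implies cyclically reduced) tacitly uses that a word over $\Gam$ whose square is reduced is shortest in its conjugacy class, which rests on \refthm{thm:stall}, \refthm{thm:qcg} and $2$-monadicity. Your argument bypasses both \reflem{lem:key} (except for trivial cancellation) and that quasi-geodesic machinery, relying only on the definition and axioms (P1)--(P4); and the local fact still drops out trivially, since minimality forbids multipliable cyclic pairs.

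One small point to tighten: in the case $|w|=1$ you assert without justification that the resolved single-letter rules of $C^\dag(S_\eps)$ have both sides in $\Gam$. This is true but deserves a line. Either note that a short critical pair $u_\sim \LAS{\circ}{S_\eps} w_\sim \RAS{\circ}{S_\eps} v_\sim$ with $v=\eps$ forces $w_\sim=(a\ov a)_\sim$ for some $a\in P$, in which case every one-step reduction of $w_\sim$ equals $\eps$, so no nontrivial resolved rule involves $\eps$; or stay entirely within your own framework: the output $y$ of the rule is conjugate to $g$ by condition \ref{it:C2}, and $y=\eps$ would make the empty word a conjugate of $g$ in $\Gam^*$ of length $0<|g|$, contradicting the minimality that your invariant provides. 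With that line added, your induction is complete.
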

\begin{proof}
By induction it is enough to prove the case where $h_\sim$ is obtained
from $g_\sim$ by applying a single rule. If $g\in P$ then 
$g\neq \eps$, as $\eps$ is  not cyclically reduced, so $g\in \Gam$,  
 and 
the result follows.

If $|g|=n\ge 2$ then there exists a word $g_1\cdots g_n\in g_\sim$ and an
element $c\in P$  
such that $g_ig_{i+1}\notin P$ for all $i$ (subscripts modulo $n$), 
 $g_1c\in P$, $\ov c g_2
\in P$ 
 and 
$f =[g_1c] [\ov c g_2]\cdots g_n\in h_\sim$. 
As $g_1\cdots g_ng_1\cdots g_n$ is reduced, it follows from
 \reflem{lem:key}, \ref{it:keyii} \& \ref{it:keyiii} that $f^2$ is reduced. Therefore
$f$, and so also $h$, is cyclically reduced, as required.
\end{proof}

This lemma suggests that cyclically reduced cyclic words under 
cyclic rewriting should play the
role of reduced words under  standard
rewriting. This works as expected,  with the exception of the behaviour
of words of length $1$.  From 
\refthm{thm:stall}, two elements of $\Gam$ are equivalent 
under $S$ only if they are equal in $\Gam$.
However this is not true of cyclic words of length $1$ and the system
$C^\dag(S)$, and we often have to treat words of length one 
separately in what follows. 

Let $u = a_1 \cdots a_n \in \Gam^*$ with $a_i \in \Gam$.
A \emph{cyclic permutation} of $u$ is any element of $u_\sim$. 
Thus, a cyclic permutation
is the same as a transposition in $\Gam^*$. 
Let $n \ge 2$. 
If for $i=1,\ldots n$, there are elements $b_i, c_i\in P$ such that 
$\ov c_{i-1} a_i$, $a_i c_i$ are in $P$,  and $b_i=[\ov c_{i-1} a_i c_i]$
(subscripts modulo $n$), then any
element of $v_\sim$, where 
$v=b_1\cdots b_n$ is called
a \emph{cyclic interleaving} of $u$; and  $u_\sim$ is also called a 
cyclic interleaving  
of $v_\sim$. 
A \emph{preconjugation}  
of $u$ by $c \in P$  (when $n\ge 2$) is the cyclic interleaving  
$v= [\ov c a_1] a_2 \cdots a_{n-1} [a_n c]$.

For $u \in \Gam$ (i.e., $n = 1$) a \emph{cyclic interleaving} 
 of $u$ by $c \in P$ is defined as $v= [cu \ov c]$ in case 
that  $cu \ov c \in P$ is defined. A \emph{preconjugation} is 
defined to be a cyclic interleaving in this case.

In all cases every cyclic interleaving of $u$ may be  obtained by 
a cyclic permutation, followed by an interleaving, followed by
a preconjugation. Moreover every cyclic interleaving
of $u$  is conjugate  to $u$ in $U(P)$. The following
lemma describes more precisely how these definitions are related.

\begin{lemma}\label{lem:cleave}
Let $g$ and $h$ be cyclically reduced words over $\Gam^*$. 
If $|g|=1$ then $h$ is a cyclic interleaving of  $g$ if and only if 
$h_\sim$ is obtained from $g_\sim$ by applying a 
length preserving rule from $C^\dag(S)$.
If $|g|\ge 2$, then
the following are equivalent. 
\be[1.)]
\item\label{it:cleavei} $h_\sim$ is obtained from $g_\sim$ by the application of a 
finite sequence of length preserving rules from $C^\dag(S)$.
\item\label{it:cleaveii} There exists a word $f$, obtained from $g$ by a cyclic permutation
 followed by a single preconjugation, such that $h=_{U(P)} f$. 
\item\label{it:cleaveiii} $h$ is a 
cyclic interleaving of $g$. 
\ee
\end{lemma}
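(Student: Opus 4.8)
The plan is to treat the two cases separately, and for $|g|\ge 2$ to prove the three implications forming the cycle $(2)\Rightarrow(1)\Rightarrow(3)\Rightarrow(2)$, which I would present in the order $(3)\Rightarrow(2)$, $(2)\Rightarrow(1)$, $(1)\Rightarrow(3)$; confluence of $C^\dag(S)$ is not needed anywhere. First, the case $|g|=1$, so $g\in\Gam$. The only length preserving rules of $C^\dag(S)$ that can act on a one letter cyclic word are the length one rules added when resolving short critical pairs, because the rules inherited from $S$ have left hand sides of length $2$. As $S$ is $2$-monadic, each added rule is a symmetric pair $g_\sim\leftrightarrow h_\sim$ coming from a short critical pair $g_\sim\LAS{\circ}{S}w_\sim\RAS{\circ}{S}h_\sim$ with $|w|=2$; writing $w=w_1w_2$ this says $g=[w_1w_2]$ and $h=[w_2w_1]$, whence $h=_{U(P)}w_2g\,\ov{w_2}$ and $h=[w_2g\,\ov{w_2}]$ is a cyclic interleaving of $g$ by $w_2$. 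Conversely, given a cyclic interleaving $h=[cg\,\ov c]$, the recursive meaning of $[cg\,\ov c]$ forces $cg\in P$ or $g\ov c\in P$; in the first case $w:=\ov c\,[cg]$ and in the second $w:=[g\ov c]\,c$ is a short word with $w_\sim\RAS{\circ}{S}g_\sim$ and, after a cyclic permutation, $w_\sim\RAS{\circ}{S}h_\sim$, so $(g_\sim,h_\sim)$ is a resolved short critical pair and a length preserving rule carries $g_\sim$ to $h_\sim$. All definedness assertions are routine consequences of the pregroup axioms and \reflem{lem:key}.

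Now let $|g|\ge 2$, where only the inherited length $2$ rules can apply, the added length one rules acting only on single letter cycles. For $(3)\Rightarrow(2)$, if $h$ is a cyclic interleaving of $g=a_1\cdots a_n$ then $h$ is a cyclic permutation $b_j\cdots b_{j-1}$ of $b_1\cdots b_n$ with $b_i=[\ov{c_{i-1}}a_ic_i]$ and $c_0=c_n$. Since every such product is defined it equals the group product, so the factors $c_i\ov{c_i}$ telescope, the seam closing up precisely because $c_0=c_n$, and one obtains $h=_{U(P)}\ov{c_{j-1}}\,(a_j\cdots a_{j-1})\,c_{j-1}$. Taking $f$ to be the preconjugation by $c_{j-1}$ of the cyclic permutation $g^{(j)}=a_j\cdots a_{j-1}$ of $g$, which is defined because $\ov{c_{j-1}}a_j,a_{j-1}c_{j-1}\in P$ by the interleaving hypotheses, I get $f=_{U(P)}\ov{c_{j-1}}g^{(j)}c_{j-1}=_{U(P)}h$, which is $(2)$.

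For $(2)\Rightarrow(1)$, a cyclic permutation does not change the cyclic word, and a single preconjugation of $u=a_1\cdots a_n$ by $c$ replaces the cyclically adjacent pair $a_na_1$ by $[a_nc][\ov c a_1]$, i.e. it is one application at the seam of the length preserving rule $a_na_1\to[a_nc][\ov c a_1]$ of $C^\dag(S)$, which is legitimate since $a_na_1\notin P$ by cyclic reducedness while $a_nc,\ov ca_1\in P$. Hence $g_\sim\RAS{\circ}{C^\dag(S)}f_\sim$, and by \reflem{lem:cyclth} $f$ is cyclically reduced, so reduced, with $|f|=|g|$. As $h$ is reduced and $h=_{U(P)}f$, \refthm{thm:stall}.\ref{stalliii} shows $h$ is an interleaving of $f$; by geodesic perfectness of $S$ (\refthm{thm:stall}.\ref{stalliv}) this interleaving is realised by a sequence of length preserving rules at internal positions, each a length preserving rule of $C^\dag(S)$, giving $f_\sim\CRAS{C^\dag(S)}h_\sim$ and hence $(1)$.

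Finally $(1)\Rightarrow(3)$, which I expect to be the crux. Each application of a rule $ab\to[ac][\ov cb]$ to a cyclically adjacent pair is itself a single cyclic interleaving of the current word: an internal application has the one nontrivial conjugator $c$, and a seam application is a preconjugation by $c$. Thus $(1)$ presents $h$ as a finite composite of cyclic interleavings of $g$, and the statement reduces to showing that \emph{cyclic interleaving is transitive} on cyclically reduced words of length $\ge 2$. Here the natural conjugators for the composite of interleavings by $(c_i)$ and $(e_i)$ are $f_i=[c_ie_i]$, with $f_0=f_n$ automatic; the work is to prove each $c_ie_i\in P$ and $d_i=[\ov{f_{i-1}}a_if_i]$, together with the bookkeeping of the cyclic permutations implicit in the definition. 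This is the cyclic counterpart of Stallings' proof that interleaving is an equivalence relation on reduced words, and I would carry it out by induction on the number of rules in $(1)$, appending one elementary cyclic interleaving at a time and using \reflem{lem:key}.\ref{it:keyiv} and its relatives to guarantee that the required products are defined, exactly in the manner of the proof of \refthm{thm:stall}.\ref{stalliv}. This transitivity is the main obstacle; everything else is bookkeeping together with appeals to \refthm{thm:stall}.
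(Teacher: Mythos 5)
Your architecture is sound and genuinely different from the paper's: the paper proves the cycle \ref{it:cleaveiii}$\Rightarrow$\ref{it:cleavei}$\Rightarrow$\ref{it:cleaveii}$\Rightarrow$\ref{it:cleaveiii}, concentrating all the difficulty in \ref{it:cleavei}$\Rightarrow$\ref{it:cleaveii} (an induction on the number of rules with a five-way case analysis, driven by \reflem{lem:key}.\ref{it:keyiv} and \ref{it:keyv}), whereas you prove the reverse cycle. Your two completed implications are correct and arguably cleaner than the paper's counterparts: the telescoping argument for \ref{it:cleaveiii}$\Rightarrow$\ref{it:cleaveii} is exactly right (the definedness of the preconjugation comes from the interleaving hypotheses $\ov c_{j-1}a_j,\ a_{j-1}c_{j-1}\in P$), and \ref{it:cleaveii}$\Rightarrow$\ref{it:cleavei} via \reflem{lem:cyclth}, \refthm{thm:stall}.\ref{stalliii} and geodesic perfectness (\refthm{thm:stall}.\ref{stalliv}) is legitimate, since a preconjugation is a single symmetric rule applied at the seam of the cyclic word. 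The $|g|=1$ case is also handled correctly.

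The genuine gap is \ref{it:cleavei}$\Rightarrow$\ref{it:cleaveiii}. Reversing the cycle does not avoid the core of this lemma, it only relocates it, and at that point you stop: ``I would carry it out by induction \ldots using \reflem{lem:key}.\ref{it:keyiv} and its relatives'' is a plan, not a proof, and without some implication out of \ref{it:cleavei} the three statements are not shown equivalent. Concretely, three things are asserted but not discharged. First, the definedness $c_ie_i\in P$ of your composite conjugators: when a new rule lands at a position whose accumulated conjugator $c_i$ is nontrivial, this needs \reflem{lem:key}.\ref{it:keyv} (not \ref{it:keyiv}) applied with $g=[\ov c_{i-1}a_i]$, $b=c_i$, $c=e_i$, $h=[a_{i+1}c_{i+1}]$, and among its hypotheses is the non-definedness $[\ov c_{i-1}a_i][a_{i+1}c_{i+1}]\notin P$; this does not follow from the cyclic reducedness of the current word by \reflem{lem:key}.\ref{it:keyii} (which gives the implication in the opposite direction) and must be extracted from the cyclic reducedness of $g$ using \reflem{lem:key}.\ref{it:keyiii} together with the neighbouring letters and the involution. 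Second, the ``bookkeeping of the cyclic permutations'' is not mere bookkeeping: the anchored linear representative moves under each step, and tracking where the new conjugator lands relative to the old seam is precisely what generates the paper's case analysis (its cases $i=0,1,2,\ge 3$ and the delicate separate case $n=2$, which your sketch never mentions). Third, the proof of \refthm{thm:stall}.\ref{stalliv} that you propose to imitate establishes a different statement (that a length reduction can be pulled back through symmetric rules) and contains no composition-of-interleavings argument; the transitivity you need is the cyclic analogue of a result the paper attributes to Stallings and does not reprove, so it cannot simply be cited either. Until this composition step is carried out in detail, the proposal proves \ref{it:cleaveiii}$\Rightarrow$\ref{it:cleaveii}$\Rightarrow$\ref{it:cleavei} but not the lemma.
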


\begin{proof}
First consider the case $n=1$. 
Then $h$ is a cyclic interleaving  of $g$ if only if there
exists  $b\in P$
such that either  $\ov bg$ or $g b \in P$ and  $[ \ov bg b]=h \in P$.
On the other hand, there is  a symmetric rule in $C^\dag(S)$ 
transforming  $g_\sim$  to $h_\sim$ if and only if there exists
$b\in P$ such that either  $ \ov bg\in P$ and  $[\ov bg] b \RAS{}S h$
(in which case $b[\ov bg]\RAS{}S g$); or 
  $g b \in P$
and  $\ov b [gb] \RAS{}S h$. 

Now suppose $n\ge 2$. We show first that \ref{it:cleaveiii} implies
\ref{it:cleavei}. If \ref{it:cleaveiii} holds then there exist $g_i,a_i\in P$ such
that $\ov a_{i-1} g_i$, $g_i a_i$ and  
$\ov a_{i-1} g_i a_i\in P$   and 
$h$ is a cyclic permutation of $h_1\cdots h_n$, where $h_i=[\ov a_{i-1} g_i a_i]$. Therefore, we may successively
apply symmetric rules of $S$ to $g_\sim$ to obtain $(h_1\cdots h_n)_\sim=h_\sim$
as required. 

Next we show that \ref{it:cleavei} implies
\ref{it:cleaveii}. If   \ref{it:cleavei} holds then there exist
words $g_0,\ldots, g_n$ in $\Gam^*$ such that $g_0=g$, $h\in g_{n\,\sim}$
and $g_{i+1\,\sim}$ is obtained by applying a symmetric rule of $C^\dag(S)$ to 
$g_{i\,\sim}$. 
If $n=0$ then $h$ is a cyclic permutation of $g$ and there is nothing
further to do. Assume then that $n>0$. 
From \reflem{lem:cyclth} $g_i$ is cyclically reduced for all $i$. 
By definition there exists  a word $g_0=a_1\ldots a_n\in g_\sim$ and 
an element $c\in P$ such that $a_1 c\in P$, $\ov c a_2\in P$ and
$g_1=b_1\cdots b_n$, where $b_1=[a_1 c]$, $b_2=[\ov c a_2]$ and 
$b_i=a_i$, for $i\ge 2$. 
By induction, there exists a word $f_1$, obtained from $g_1$ 
 by a cyclic permutation
 followed by a single preconjugation, such that $h=_{U(P)} f_1$. 
There are several cases to consider, depending on which cyclic permutation
of $g_1$ is taken. Assume $f_1$ is a preconjugation of a cyclic permutation 
$b_{i+1} \cdots b_i$ of $g_1$, where $0\le i\le n-1$. That is, 
there exists $d\in P$ 
such that  $\ov d b_{i+1}\in P$, $b_i d\in P$ and 
$f_1=c_{i+1}\cdots c_i$, where $c_{i+1}=[\ov d b_{i+1}]$, $c_i=[b_i d]$
and $c_j=b_j$, if $j\neq i,i+1$. Thus
\[
f_1=
\begin{cases}
c_1c_2=[\ov d[a_1c]][[\ov ca_2 ] d] & \textrm{ if } i=0 \textrm{ and } n=2\\
c_1c_2\cdots c_n= [\ov d[a_1 c]][\ov c a_2] \cdots [a_n d]&  \textrm{ if } i=0 \textrm{ and } n\ge 3\\
c_2c_3 \cdots c_nc_1=[\ov d [\ov c a_2]] a_3 \cdots a_n [[a_1 c]d]&  \textrm{ if } i=1 \\
c_3\cdots c_nc_1c_2=[\ov d a_3]\cdots  a_n[a_1c][[\ov c a_2]d] &  \textrm{ if } i=2 \\
c_{i+1}\cdots c_nc_1c_2 \cdots c_i=[\ov d a_{i+1}] \cdots a_n
[a_1 c][\ov c a_2] \cdots [a_i d]&  \textrm{ if } i\ge 3
\end{cases}
\]

If $i\ge 3$ then $n\ge 3$ and, 
as $i+1\le n$,  we have  $c_1c_2=_{U(P)}a_1a_2$ so 
\[h=_{U(P)}f_1 =_{U(P)} [d a_{i+1}]\cdots a_1a_2 \cdots [a_{i} d],\]
 a preconjugation of the cyclic permutation $a_{i+1}\cdots a_i$ of $g$.

If $i=2$ then  \reflem{lem:key}.\ref{it:keyiv} applied to
 the four elements $[a_1c]$, $[\ov c a_2]$, $\ov c$ and $d$, shows that 
$[c \ov c a_2 d]=[a_2 d]\in P$. Therefore $c_1 c_2=_{U(P)}[a_1 c][\ov c a_2 d]=_{U(P)}a_1[a_2 d]$ and 
\[f_1=_{U(P)}[\ov da_3]\cdots a_n a_1 [a_2 d],\] 
as 
required. 

If $i=1$ then from \reflem{lem:key}.\ref{it:keyv} it follows
that $cd\in P$ so 
\[f_1=_{U(P)}[\ov{(cd)} a_2]a_3 \cdots a_n[a_1(cd)],\] 
as 
required.

If $i=0$ and $n\ge 3$  
then the result follows, by symmetry, from the case $i=2$, 
leaving the case $i=0$ and $n=2$. 
As $g$ is cyclically reduced, 
$a_1a_2a_1a_2$ is a reduced word and therefore so is $[a_1c][\ov c a_2]
[a_1c][\ov c a_2]$. Hence $[a_1c][\ov c a_2]$ is cyclically reduced.
Applying \reflem{lem:key}.\ref{it:keyiv} to $[\ov c a_2]$, $d$,
$[a_1c]$ and $\ov c$, gives $\ov d a_1\in P$. Similarly $a_2 d\in P$ and
the result follows as before.

Finally, to show that \ref{it:cleaveii} implies \ref{it:cleaveiii}, suppose
that $h=_{U(P)}f$, where 
\[f=[\ov b g_1]g_2 \cdots g_{n-1} [g_n b],\] and  
\[g=g_{i+1}\cdots g_n g_1 \cdots g_i,\] for some $i$. Then, from
Lemma \ref{lem:cyclth}, $f$ is cyclically reduced and hence,
from \refthm{thm:stall}, $h$ is an interleaving of 
$f$. From Lemma \ref{lem:key}, it  follows that $h$ is 
a  cyclic interleaving of $g$.  
 \end{proof}

\begin{lemma}\label{lem:epscconfl}
The system $C^\dag(S_\eps)$ is confluent. 
\end{lemma}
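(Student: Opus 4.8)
The plan is to obtain confluence from \refthm{thm:CKB}, which reduces confluence of $C^\dag(S_\eps)$ on all cyclic words to confluence on short ones. First I would verify the hypotheses of that theorem for $S_\eps$: the empty word is $S_\eps$-irreducible and $m(S_\eps)=2$, so $S_\eps$ is standard; it is strongly confluent by \reflem{lem:sconfl}; and $C^\dag(S_\eps)$ contains $\RAS{\circ}{S_\eps}$ and adjoins, in resolving a short critical pair, only rules joining cyclic words with a common conjugate source, so conditions \ref{it:C1} and \ref{it:C2} hold by construction. The structural remark I would emphasise is that the rules adjoined by the $C^\dag$-construction are whole-cycle rules on cyclic words of length one; consequently, on a cyclic word of length at least two only the inherited string rules of $S_\eps$ can be applied. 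As $m(S_\eps)=2$, short cyclic words have length at most $2$, and since $\abs r\le\abs\ell$ for every rule, rewriting a short word never leaves the set of words of length $\le 2$.

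It then remains to prove confluence on cyclic words of length $\le 2$. Words of length $0$ are irreducible, and a word $\eps_\sim$ admits only the step $\eps\to 1$. For a letter $a\in\Gam$ every applicable rule is length-preserving and reversible, and since all letters reachable from $a$ are conjugate to it, any such peak closes by connectedness. The real content is a cyclic word $(ab)_\sim$ of length two, on which, by the remark above, only the string rules of $S_\eps$ act, read either along $ab$ or along its rotation $ba$.

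On such a word I would separate the overlaps into same-rotation and cross-rotation overlaps. A same-rotation overlap is a string overlap of $S_\eps$ on the word $ab$, and strong confluence of $S_\eps$ (\reflem{lem:sconfl}) closes it. A cross-rotation overlap, where a rule read along $ab$ competes with one read along $ba$, corresponds to two overlapping length-two windows in the periodic string $abab$, so it too is governed by strong confluence of $S_\eps$; the one place where genuinely new rules are needed is when both competing rules reduce the length, producing the two letters $[ab]_\sim$ and $[ba]_\sim$. This is exactly a short critical pair in the sense of \refsec{sec:cgp}, and $C^\dag(S_\eps)$ closes it by the directly adjoined rule between these conjugate letters; \reflem{lem:key}.\ref{it:keyi} is used to confirm, in the competing reduce-versus-interleave configurations, that the image $[ac][\ov c b]$ of a symmetric rule is again defined and reduces to $[ab]$, so the two branches meet on a letter. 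Because the length-preserving rules loop, I would promote this local analysis to genuine confluence on short words by passing to the shortlex-descending part, which is terminating, establishing its local confluence on words of length $\le 2$ as above, and applying Newman's lemma exactly as in the proof for strongly confluent Thue systems in \refsec{sec:sct}.

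The main obstacle is the cross-rotation analysis on length-two words: strong confluence of $S_\eps$ is a statement about linear overlaps, and one must check carefully that every competition between a rule read along $ab$ and one read along $ba$ either is resolved by an inherited string overlap of the periodic word or descends to the short critical pair $[ab]_\sim,[ba]_\sim$ that the $C^\dag$-construction has already resolved, with \reflem{lem:key} and the pregroup axioms \ref{it:P4} and \ref{it:P5} doing the local work and the $\eps$-reductions treated as degenerate cases. Once confluence on words of length $\le 2$ is secured, \refthm{thm:CKB} yields confluence of $C^\dag(S_\eps)$ on all cyclic words.
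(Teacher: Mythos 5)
Your frame is the paper's frame: $S_\eps$ is standard and strongly confluent (\reflem{lem:sconfl}), so \refthm{thm:CKB} reduces everything to confluence on cyclic words of length at most $2$, and lengths $0$ and $1$ are disposed of essentially as you say. The gap is in the length-two analysis, which is the entire content of the lemma, and there your two key claims are wrong. First, the ``periodic string $abab$'' argument is not valid: on a cycle of length two, a rule applied along the rotation $ab$ and a rule applied along the rotation $ba$ overlap at \emph{both} ends; this is exactly the configuration that the proof of \refthm{thm:CKB} excludes from the strong-confluence argument (which needs $\abs w \geq 2m(S)-1$ so that left-hand sides overlap at most once), so strong confluence of the string system $S_\eps$ says nothing about such peaks. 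Second, the claim that the only genuinely new rules needed are those joining $[ab]_\sim$ and $[ba]_\sim$ is false. Consider the peak $[ab]_\sim \LAS{\circ}{S_\eps} (ab)_\sim \RAS{\circ}{S_\eps} ([bc][\ov c a])_\sim$, where the second step interleaves along the rotation $ba$ (so $(b,c),(\ov c,a)\in D$), and suppose $ba\notin P$ --- this genuinely occurs, e.g.\ for HNN pregroups. Then the letter $[ba]$ does not exist, so your designated critical pair is unavailable, and by \reflem{lem:key}.\ref{it:keyii} the rotation $[bc][\ov c a]$ is not reducible either. The paper closes this peak by a different mechanism: \reflem{lem:cyclth} shows $([bc][\ov ca])_\sim$ is not cyclically reduced, whence $[\ov ca][bc]\in P$ and the word reduces to the letter $[\ov c abc]$; (P5) applied to $\ov c, a, b, c$ gives $\ov c ab\in P$ or $abc\in P$; and then $[ab]$ is joined to $[\ov cabc]$ by the adjoined rule coming from the short critical pair sourced at the \emph{auxiliary} word $([\ov cab]c)_\sim$ (or $(\ov c[abc])_\sim$) --- a length-two word lying on neither branch of the peak --- while $[\ov cabc]$ is joined to the reduct of $([bc][\ov ca])_\sim$ by the pair sourced at that word itself. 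Critical pairs sourced at off-path words, and the pregroup computations that make them available, are the heart of the paper's proof and are absent from your sketch; your fallback dichotomy (``string overlap or the pair $[ab]_\sim,[ba]_\sim$'') does not cover this case.

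A smaller divergence: the paper never invokes Newman's lemma or the descending part. Instead it reduces arbitrary divergences to single symmetric steps by observing (again via \reflem{lem:cyclth}) that every length-two word occurring in a chain of symmetric rules reduces in one step to a letter, and then chaining the adjoined-rule connections between consecutive reducts, all of which lie in the length-preserving part $C_1$. Your termination-plus-local-confluence route could in principle replace that bookkeeping, but it cannot substitute for the missing local analysis above, so the proposal as it stands does not prove the lemma.
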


\begin{proof}
 The system $S_\eps$ is standard and it is 
 strongly confluent by \reflem{lem:sconfl}. Thus, by \refthm{thm:CKB}
 it is enough to show that $C^\dag(S_\eps)$ is confluent on all short cyclic 
words. Thus 
we have to consider 
 the situation: 
 \begin{equation}\label{eq:scpe} 
d_\sim \CLAS{C^\dag(S_\eps)} w_\sim \CRAS{C^\dag(S_\eps)} e_\sim,
\end{equation}
where $w$ is short. 
 We must show 
that \[d_\sim\CRAS{C^\dag(S_\eps)} t_\sim \CLAS{C^\dag(S_\eps)} e_\sim,\] 
for some $t_\sim$. 
 As $w_\sim$ is a short cyclic word 
 we have $\abs {w_\sim} \leq 2$. 
 If  $w=1$ in $U(P)$, then $u \RAS*{S_\eps} 1$, for all
 $u \CDAS{C^\dag(S_\eps)} w$,  and 
 we may take $t_\sim=1$. 
Thus, we may assume $1 \leq \abs {w_\sim}$ and $w \neq 1 \in U(P)$. 
If $\abs {w_\sim}= 1$ then $w_\sim$ is cyclically reduced, 
since $w \neq \eps$. 
 Hence all rules involved in \eqref{eq:scpe} are symmetric and 
we may take $t_\sim=w_\sim$. Thus, from now on in the proof we may 
 assume $\abs {w_\sim}= 2$. 
 Since all length preserving rules in $\RAS{\circ}{C^\dag(S_\eps)}$ are symmetric, we 
 are done if $d\notin \Gam$ or $e\notin \Gam$. 
 Thus, as suggested by the notation we have $d,e\in \Gam$.
 Again, since length preserving rules are symmetric, we may assume that
 the situation is  
 \[d_\sim \LAS{\circ}{C^\dag(S_\eps)} w_{0\,\sim} \DAS{\circ}{C^\dag(S_\eps)} 
\cdots \DAS{\circ}{C^\dag(S_\eps)} w_{k\,\sim}\RAS{\circ}{C^\dag(S_\eps)} e_\sim,\]
where 
 all $w_i$ have length 2.
As $w_{0\,\sim}$ is not cyclically reduced, Lemma \ref{lem:cyclth} implies
that no  
$w_{i\,\sim}$ is cyclically reduced.
Hence, for all $i$ there exists $e_i\in \Gam$ such that 
 $w_{i\,\sim}\RAS{\circ}{C^\dag(S_\eps)} e_i\in \Gam$.  
It therefore suffices to show that if
\[ d_{\sim}\LAS{\circ}{C^\dag(S_\eps)} u_{\sim}\DAS{\circ}{C^\dag(S_\eps)} v_\sim\RAS{\circ}{C^\dag(S_\eps)} e_\sim,\]
where $\abs{u}=\abs{v}=2$ and $\abs{d}=\abs{e}=1$, then  
\[d_\sim\CDAS{C_1} e_\sim,\]
where $C_1$ is the length preserving part of $C^\dag(S_\eps)$. 
  We may assume that
  $u_\sim =(ab)_\sim$ with $a,b\in \Gam$, $[ab]=d\in \Gam$, 
 and that  there exists $c\in \Gam$ such that either 
 $v_\sim = ([ac][\ov cb])_\sim$ or $v = ([\ov c a] [b c])_\sim$.  If $v_\sim= ([ac][\ov cb])_\sim$ then 
\[ d_\sim\LAS{\circ}{C^\dag(S_\eps)}v_\sim \RAS{\circ}{C^\dag(S_\eps)} e_\sim,\]
so $d_\sim\DAS{\circ}{C_1} e_\sim$ 
and we are done. 

Assume then that  $v_\sim = ([\ov c a] [b c])_\sim$. If $ba\in P$ then
 \[ d_\sim\LAS{\circ}{C^\dag(S_\eps)}u_\sim \RAS{\circ}{C^\dag(S_\eps)} [ba]_\sim,\]
and so $d_\sim\DAS{\circ}{C_1} [ba]_\sim$. Also  
 \[ e_\sim\LAS{\circ}{C^\dag(S_\eps)}v_\sim \RAS{\circ}{C^\dag(S_\eps)} [ba]_\sim,\]
 so $e_\sim\DAS{\circ}{C_1} [ba]_\sim\DAS{\circ}{C_1} d_\sim$, as required.

Therefore we may assume that $ba\notin P$. Applying (P5) to
the elements $\ov c$, $a$, $b$ and $c$ we have $\ov c ab$ or $abc \in P$. 
Also, 
from Lemma \ref{lem:key}.\ref{it:keyiii}, 
$[bc][\ov ca]\notin P$. As $v_\sim$ is not cyclically reduced it follows that 
$[\ov c a][ bc]\in P$, so we have 
 \[ d_\sim\LAS{\circ}{C^\dag(S_\eps)}([\ov c ab] c)_\sim \RAS{\circ}{C^\dag(S_\eps)} [\ov c ab c]_\sim \quad\textrm{ or }\quad d_\sim\LAS{\circ}{C^\dag(S_\eps)}(\ov c [ab c])_\sim \RAS{\circ}{C^\dag(S_\eps)} [\ov c ab c]_\sim,\]
and in both cases 
\[d_\sim\DAS{\circ}{C_1} [\ov c ab c]_\sim.\]
Moreover, 
\[e_\sim\LAS{\circ}{C^\dag(S_\eps)} v_\sim \RAS{\circ}{C^\dag(S_\eps)} [\ov c ab c]_\sim,\]
so  $d_\sim\DAS{\circ}{C_1} [\ov c ab c]_\sim \DAS{\circ}{C_1} e,$ as required.
\end{proof}

Having established the confluence of $C^\dag(S_\eps)$ we may get rid of 
the letter $\eps$ and the rule $\eps\ra {} 1$. 
That is: we switch back to the system $S=S(P)$. 

\begin{theorem}\label{thm:mainpre}
Let $S \subseteq 
\Gamma^* \times \Gamma^*$ be the Thue system  
associated with $P$, c.f.{} \refdef{def:ug}. Then $C^\dag(S)$ is geodesically perfect.
\end{theorem}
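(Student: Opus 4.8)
The plan is to verify the hypotheses of \refcor{cor:gpercyc} for the system $S=S(P)$. Indeed, $S$ is standard, since $m(S)=2$ and $S$ contains no rule of the form $1\to r$; it is $2$-monadic because $m(S)=2$; and $\GS=U(P)$ is a group. By \refthm{thm:stall}.\ref{stalliv} the system $S$ is geodesically perfect, and by construction $C^\dag(S)$ satisfies \ref{it:C1} and \ref{it:C2}. Thus the only hypothesis of \refcor{cor:gpercyc} that is not immediate is the confluence of $C^\dag(S)$; once that is in hand, the corollary delivers the theorem.

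To prove that $C^\dag(S)$ is confluent I would transfer confluence from $C^\dag(S_\eps)$, which is available from \reflem{lem:epscconfl}. Let $\pi\colon P^*\to\Gamma^*$ be the monoid homomorphism deleting every occurrence of $\eps$; it restricts to the identity on $\Gamma$-words and induces a map on cyclic words, and it is precisely the Tietze transformation turning $S_\eps$ into $S$. The heart of the argument is to check that $\pi$ realises a two-way simulation between the two cyclic systems on $\Gamma$-cyclic words. First, every one-step $C^\dag(S)$-rewrite of a $\Gamma$-cyclic word is mirrored by a $C^\dag(S_\eps)$-derivation: a single rule when $[ab]\neq\eps$, the two steps $ab\to\eps\to 1$ when $[ab]=\eps$, the identical symmetric rule in the interleaving case, and the matching resolved rule for the length-one critical pairs. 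Second, applying $\pi$ to any $C^\dag(S_\eps)$-derivation issuing from a $\Gamma$-cyclic word yields a $C^\dag(S)$-derivation, since each $S_\eps$-rule either becomes trivial (when it only manipulates $\eps$) or projects onto the corresponding $S$-rule, while $\eps\to 1$ projects to the identity. Granting the simulation, confluence transfers as follows: if $u_\sim\CLAS{C^\dag(S)}w_\sim\CRAS{C^\dag(S)}v_\sim$ with $w$ a $\Gamma$-word, then lifting gives $u_\sim\CLAS{C^\dag(S_\eps)}w_\sim\CRAS{C^\dag(S_\eps)}v_\sim$, so by \reflem{lem:epscconfl} there is a cyclic word $t'_\sim$ over $P^*$ with $u_\sim\CRAS{C^\dag(S_\eps)}t'_\sim\CLAS{C^\dag(S_\eps)}v_\sim$; setting $t=\pi(t')$ and projecting, the second half of the simulation yields $u_\sim\CRAS{C^\dag(S)}t_\sim\CLAS{C^\dag(S)}v_\sim$, the required confluence.

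The main obstacle is establishing the faithfulness of this simulation, and within it the matching of the resolved short critical pairs. Because $S$ is $2$-monadic, the only rules added in forming $C^\dag$ are length-one to length-one rules recording conjugacy of letters through a length-two intermediary, so I must confirm that the length-one rules produced for $S$ and for $S_\eps$ coincide on $\Gamma$. This rests on the fact that $\eps$ is never the source or target of such a rule, since $\eps\to 1$ makes $\eps_\sim$ reducible; hence every short critical pair that is actually resolved lies over $\Gamma$ in both systems, and the key-lemma facts—notably \reflem{lem:key}.\ref{it:keyii}, which forces $[ac][\ov c b]$ to be $\eps$-free whenever $(a,b)\notin D$—guarantee that the intermediaries and their reductions agree after deleting $\eps$. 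The remaining checks, that interleaving rules survive deletion and that the $\eps\to 1$ passages can always be pushed through, are the routine case analysis underlying the simulation.
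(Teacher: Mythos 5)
Your overall route is the paper's: check that $S=S(P)$ is a standard $2$-monadic Thue system, quote \refthm{thm:stall} for geodesic perfection of $S$, obtain confluence of $C^\dag(S)$ from the confluence of $C^\dag(S_\eps)$ (\reflem{lem:epscconfl}), and conclude with \refcor{cor:gpercyc}. The paper asserts the confluence transfer in a single sentence without proof; the simulation you interpose to justify it is exactly where your argument has a genuine gap.

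The flaw is in the second half of your simulation, namely the claim that $\pi$ maps any $C^\dag(S_\eps)$-derivation issuing from a $\Gam$-cyclic word to a $C^\dag(S)$-derivation because ``each $S_\eps$-rule either becomes trivial or projects onto the corresponding $S$-rule''. This is false: the interleaving rule of $S_\eps$, $ab\longleftrightarrow[ac][\ov c b]$, requires only $(a,c),(\ov c,b)\in D$, whereas the interleaving rule of $S$ additionally requires $(a,b)\notin D$. Hence a $C^\dag(S_\eps)$-derivation may contain a step $p\,ab\,q\RAS{\circ}{S_\eps}p\,[ac][\ov c b]\,q$ with $(a,b)\in D$; when $[ac]\neq\eps\neq[\ov c b]$ its projection is neither trivial nor an $S$-rule, and in general neither of the two projected words rewrites to the other in $C^\dag(S)$ --- they are merely joinable, both rewriting to $p\,[ab]\,q$. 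Likewise an $S_\eps$-interleaving involving $\eps$, such as $\eps b\longleftrightarrow c[\ov c b]$, projects to the length-increasing step $b\Rightarrow c[\ov c b]$, i.e.\ to a \emph{backward} $C^\dag(S)$-step. Consequently the projection of the valley $u\CRAS{C^\dag(S_\eps)}t'\CLAS{C^\dag(S_\eps)}v$ supplied by \reflem{lem:epscconfl} is in general only a zigzag whose consecutive terms are joinable in $C^\dag(S)$, and passing from such a zigzag to joinability of $u$ and $v$ requires precisely the Church-Rosser property of $C^\dag(S)$ that you are trying to prove: the argument is circular. A correct repair avoids the projection altogether: verify that $S$ itself is strongly confluent (the analogue of \reflem{lem:sconfl}), rerun the short-word case analysis of \reflem{lem:epscconfl} directly over $\Gam$ (it carries over almost verbatim, since after its initial reductions all letters appearing there already lie in $\Gam$), and then apply \refthm{thm:CKB} to $S$ itself.
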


\begin{proof}The system $S= S(P)$ is a standard 2-monadic Thue system. 
The confluence of $C^\dag(S)$  follows {}from \reflem{lem:epscconfl}. 
By \refthm{thm:stall} 
the semi-Thue system $S$ is geodesically perfect. The result 
follows  by \refcor{cor:gpercyc}.
\end{proof}

\begin{corollary}\label{cor:hugo} 
Cyclically reduced elements are minimal length representatives of their conjugacy class in $U(P)$. 
Let $g$ and $f$ be cyclically reduced elements of $\Gam^*$ such that 
$g$ is conjugate to $f$ in $U(P)$. Then the following hold. 
\begin{enumerate} 
\item\label{hugoi} $g$ and $f$ have the same length.
\item\label{hugoii} If $g\notin P$, i.e., $|g|\geq 2$, then we can transform the cyclic word $g_\sim$ 
into the cyclic word 
$f_\sim$ by a sequence of at most $\abs{g}$ length preserving rules from $C^\dag(S)$. 
\item\label{hugoiii}If $g\in P$, i.e., $|g| = 1$, then we can transform $g$ into
$f$ by a sequence of   preconjugations.
\end{enumerate}
\end{corollary}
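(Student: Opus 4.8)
The plan is to derive Corollary~\ref{cor:hugo} directly from \refthm{thm:mainpre}, which tells us that $C^\dag(S)$ is geodesically perfect, together with \reflem{lem:cleave}, which translates length-preserving rewriting into cyclic interleavings. First I would establish the minimality claim: since $C^\dag(S)$ is geodesically perfect, every cyclic word reduces via length-reducing rules to a geodesic cyclic word, and by definition a geodesic cyclic word is a shortest word in its conjugacy class. A cyclically reduced word is geodesic with respect to $C^\dag(S)$, so no length-reducing rule applies to it; hence it already \emph{is} of minimal length among all words representing elements of its conjugacy class. This gives the opening assertion and immediately yields part~\ref{hugoi}: if $g$ and $f$ are both cyclically reduced and conjugate, they are both minimal-length representatives of the same conjugacy class, so $\abs g = \abs f$.

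Next I would handle parts \ref{hugoii} and \ref{hugoiii} simultaneously by invoking confluence. Since $g$ and $f$ are conjugate in $U(P)$, \refthm{thm:mainpre} (via \refcor{cor:gpercyc}) gives $g_\sim \CDAS{C^\dag(S)} f_\sim$, and by geodesic perfectness we may reduce each to a geodesic and then pass between geodesics using only length-preserving rules. Because $g$ and $f$ are \emph{already} cyclically reduced (hence geodesic) and of equal length, no length-reducing step is needed: there is a derivation from $g_\sim$ to $f_\sim$ using only length-preserving rules of $C^\dag(S)$. For the case $\abs g \geq 2$ this is exactly the hypothesis of \reflem{lem:cleave}, whose equivalence of \ref{it:cleavei} and \ref{it:cleaveii} says such a transformation is realised by a cyclic permutation followed by a single preconjugation; so part~\ref{hugoii} follows once I bound the number of rules. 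For the case $\abs g = 1$ the length-preserving rules are, by the $n=1$ clause of \reflem{lem:cleave}, precisely the cyclic interleavings, which for length one \emph{are} the preconjugations by definition; this gives part~\ref{hugoiii}.

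The remaining work, and the step I expect to require the most care, is the length bound ``at most $\abs g$ rules'' in part~\ref{hugoii}. The existence of \emph{some} length-preserving derivation is immediate, but bounding its length by $\abs g$ is not automatic from confluence alone. The natural approach is to use the geometric picture: a single length-preserving rule of $C^\dag(S)$ corresponds to a preconjugation acting at one edge of the cyclic word, moving a ``conjugating element'' $c \in P$ across one letter. To transform $g_\sim$ to $f_\sim$ one carries such an element once around the cycle, and since the cycle has $\abs g$ edges this can be arranged in at most $\abs g$ steps. I would make this precise by appealing to the equivalence \ref{it:cleavei} $\Leftrightarrow$ \ref{it:cleaveiii} of \reflem{lem:cleave}: $f$ is a cyclic interleaving of $g$, meaning there exist $b_i, c_i \in P$ with $b_i = [\ov c_{i-1} a_i c_i]$ for $i = 1, \ldots, n$ (subscripts mod $n$); each index contributes one preconjugation step, giving $n = \abs g$ length-preserving rules in total. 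Verifying that this interleaving data can indeed be applied as a sequence of $\abs g$ single-rule applications of $C^\dag(S)$, rather than producing a longer derivation, is the one point that warrants explicit checking.
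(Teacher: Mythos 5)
Your proposal is correct and takes essentially the same approach as the paper, whose entire proof is the single line ``Immediate by the confluence of $C^\dag(S)$ and \reflem{lem:cleave}.'' You have simply expanded that one-liner: geodesic perfectness of $C^\dag(S)$ (\refthm{thm:mainpre}) for the minimality claim and part~\ref{hugoi}, confluence plus the Thue property to get a purely length-preserving derivation between the two geodesics, and the cyclic-interleaving data of \reflem{lem:cleave} (one symmetric rule per position of the cycle) for parts~\ref{hugoii} and~\ref{hugoiii}, including the bound of $\abs{g}$ rules.
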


\begin{proof}
 Immediate by the confluence of $C^\dag(S)$ and \reflem{lem:cleave}. 
\end{proof}

The following theorem is the main result in this section.  It  makes 
statement \ref{hugoii} of \refcor{cor:hugo}
much more precise. 

\begin{theorem}\label{thm:conjugation} 
Let $g$ and $f$ be a cyclically reduced elements of{} $\Gamma^*$ such that
$g$ is conjugate to $f$ in $U(P)$. Let $g = g_1 \cdots g_n$  with $g_i \in P$ and 
 $n=|g|\geq 2$. Then, we may obtain $f$, as an element in $U(P)$, by a single cyclic permutation followed by a  preconjugation. 
 More precisely, we have 
 \[f = [bg_{i}] \cdots g_n g_1\cdots [g_{i-1}\ov b] \in U(P),\]
  where $b\in P$ and $bg_{i}$, $g_{i-1}\ov b \in P$. 
 \end{theorem}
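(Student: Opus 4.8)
The plan is to obtain the statement almost directly from \refcor{cor:hugo} and \reflem{lem:cleave}: all of the substantive work (length preservation, and the existence of a normal form "one cyclic permutation followed by one preconjugation") has already been done in those results, so the present theorem amounts to assembling them and reading off the explicit formula. Throughout I keep in mind that we are in the case $n = |g| \geq 2$, which is exactly the regime in which both \refcor{cor:hugo}.\ref{hugoii} and the three-way equivalence of \reflem{lem:cleave} apply.

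First I would observe that, since $g$ and $f$ are cyclically reduced and conjugate in $U(P)$ with $|g| = n \geq 2$, part~\ref{hugoii} of \refcor{cor:hugo} guarantees that the cyclic word $f_\sim$ is obtained from $g_\sim$ by a finite sequence of length preserving rules of $C^\dag(S)$. This is precisely condition~\ref{it:cleavei} of \reflem{lem:cleave}, applied with $h = f$. Invoking the implication \ref{it:cleavei} $\Rightarrow$ \ref{it:cleaveii} of \reflem{lem:cleave} then produces a word $f'$, obtained from $g$ by a single cyclic permutation followed by a single preconjugation, such that $f =_{U(P)} f'$.

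It then remains only to write out those two operations explicitly. The cyclic permutation carries $g = g_1 \cdots g_n$ to $g_i g_{i+1} \cdots g_n g_1 \cdots g_{i-1}$ for some index $i$ (with $i=1$ giving the trivial permutation), and, by the definition of preconjugation, a preconjugation of this word by an element $c \in P$ is $[\ov c g_i]\, g_{i+1} \cdots g_{i-2}\, [g_{i-1} c]$. Setting $b = \ov c$ (which lies in $P$ since the involution is a permutation of $P$) rewrites this as $[b g_i] \cdots g_n g_1 \cdots [g_{i-1} \ov b]$, so $f = [b g_i] \cdots g_n g_1 \cdots [g_{i-1}\ov b]$ in $U(P)$, as claimed. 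The side conditions $b g_i \in P$ and $g_{i-1}\ov b \in P$ are not separate claims to be checked but are exactly the well-definedness requirements $\ov c g_i \in P$ and $g_{i-1} c \in P$ for the preconjugation supplied by \reflem{lem:cleave}.

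There is no genuine obstacle here beyond bookkeeping: the only point requiring a little care is the index arithmetic modulo $n$, namely verifying that the single preconjugation acts only on the first and last letters $g_i$ and $g_{i-1}$ of the permuted word and that the intermediate letters are carried over unchanged, so that the explicit formula in the statement is produced verbatim rather than up to relabelling.
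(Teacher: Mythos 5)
Your proposal is correct and follows essentially the same route as the paper: the paper's entire proof is the single line that the theorem ``follows directly from \refcor{cor:hugo}'', and since that corollary is itself established via the confluence of $C^\dag(S)$ and \reflem{lem:cleave}, the chain you spell out --- \refcor{cor:hugo}.\ref{hugoii} to get a sequence of length preserving rules, then the implication \ref{it:cleavei}$\Rightarrow$\ref{it:cleaveii} of \reflem{lem:cleave} to collapse it to one cyclic permutation plus one preconjugation, then unwinding the definition of preconjugation (with $b=\ov c$) to read off the formula and the side conditions $bg_i, g_{i-1}\ov b \in P$ --- is exactly the argument the paper intends. If anything, your version is more careful than the paper's, since passing from ``finitely many length preserving rules'' to the single-permutation-plus-preconjugation normal form genuinely requires \reflem{lem:cleave}, which the paper leaves implicit.
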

\begin{proof}
This follows directly from Corollary \ref{cor:hugo}.
\end{proof}

We may strengthen the statement of \refthm{thm:conjugation} for pregroups
which 
satisfy certain extra conditions. First, in any pregroup $P$ we can define a canonical subgroup
by
\[
G_P= \set{x\in P }{(x,y), (y,x) \in D,\;  \forall y \in P}.
\]

We say that $P$ satisfies the extra axiom (P6) if the following is true. 
\begin{equation}
(f,g) \notin D \wedge (f,\ov b ) \in D \wedge (b,g)\in D  \implies b \in G_P. 
\tag{P6}
\end{equation}

Axiom (P6) holds for the standard pregroups defining amalgamated products or HNN-extensions (as in Section \ref{hnnag}), but it does not hold in general for the pregroup defining the fundamental group of a graph of groups,  
as given in \cite{Rimlinger87a}.

\begin{remark}\label{rem:p6}
If $P$ satisfies the axiom (P6) then the element $b \in P$
in the statement of \refthm{thm:conjugation} is necessarily in the 
canonical subgroup $G_P$. 
\end{remark}

We say that $P$ satisfies the extra axiom (P7) if the following is true.

\begin{equation}
\begin{split}
(y,z)\in D  \wedge (x,[yz])\in D \wedge [yz]\notin G_P 
&\wedge s\in \{x, \ov x\} \wedge t\in \{ y, z \}\\
&\implies 
\{(s,t),(t,s)\}\subset D. \end{split}
\tag{P7}
\end{equation}
First note that (P7) implies axiom (P6). To see this, suppose that 
$b,f$ and $g$ satisfy the hypotheses of (P6). Let $z=\ov g$, $y=[bg]$
and $x=[f\ov b]$. Then $(y,z)\in D$, $[yz]=b$ and $(x,[yz])=(x,b)
=([f\ov b],b)\in D$. If $[yz]\notin G_P$ then (P7) gives $(x,y)\in D$
and this  implies that $(f\ov b,bg)\in D$, from which we infer
$(f,g)\in D$, a contradiction. Thus we must have $[yz]\in G_P$ and
(P6) holds.

Axiom (P7) holds for the standard pregroup defining an amalgamated product, but
not, in general,  
for the standard pregroup defining an HNN-extension (as in Section
\ref{hnnag}). 
In contrast the following axiom (P8) holds for  the standard pregroup 
of an HNN-extension, but
not for that of an amalgamated product.
\begin{equation}
(a,b)\in D  \wedge [ab]=c \implies a\in G_P \vee b\in G_P \vee
c\in G_P. \tag{P8}
\end{equation}
Again, axiom (P8) implies axiom (P6). Indeed, consider 
the condition $(b,g) \in D$. If we have $[bg]\in G_P$, then 
$(f,\ov b) \in D$ implies $(f,g) \in D$, contrary to the hypothesis 
of (P6). Given $(f,g)\notin D$, we can exclude $g \in G_P$. Thus, (P8) yields 
the implication of  (P6).

In pregroups
in which axiom (P7) holds, elements of $P$ behave well with
respect to preconjugation. 
\begin{lemma}\label{lem:p7}
Let $P$ be a pregroup satisfying axiom (P7), let $H= G_P$ be 
its canonical subgroup and let $a, b\in P$.
If $c\in P$ is a preconjugate of both $a$ and $b$ then 
either $c\in H$ or $b$ is a preconjugate of $a$.
\end{lemma}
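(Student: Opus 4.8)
The plan is to pass to the group $U(P)$, read off the conjugating element, and then use axiom (P7) to pull that element back into a single preconjugation inside $P$. Since preconjugation of length-one elements is a symmetric relation (if $c=[xa\ov x]$ then $a=[\ov x c x]$, by Lemma~\ref{lem:key}.\ref{it:keyi}), the hypothesis gives $x,y\in P$ with $c=[xa\ov x]$ and $c=[yb\ov y]$, both products defined. In $U(P)$ this reads $c=xa\ov x=yb\ov y$, whence $b=(\ov y x)\,a\,\overline{(\ov y x)}$. So the goal reduces to showing that, when $c\notin G_P$, the element $\ov y x$ can be realised inside $P$, i.e.\ that $(\ov y,x)\in D$, and that $z_0=[\ov y x]$ satisfies $z_0a\ov{z_0}\in P$; for then $[z_0a\ov{z_0}]=b$ and $b$ is a preconjugate of $a$.

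The hypothesis $c\notin G_P$ is exactly what licenses (P7), and this is the heart of the argument. After possibly applying the involution $\ov{\phantom u}$ to the whole configuration (which sends a preconjugate of $a$ to a preconjugate of $\ov a$, fixes $G_P$, and toggles the admissible bracketings) and using the symmetry of preconjugation, I may assume the defined bracketings are $c=[[xa]\ov x]$, so $(x,a)\in D$ and $([xa],\ov x)\in D$, and $b=[[\ov y c]y]$, so $(\ov y,c)\in D$; the remaining bracketing combinations are treated identically. I then invoke (P7) with $\beta=[xa]$, $\gamma=\ov x$, $\alpha=\ov y$: the hypotheses $([xa],\ov x)\in D$, $(\ov y,[[xa]\ov x])=(\ov y,c)\in D$ and $[[xa]\ov x]=c\notin G_P$ all hold, so the conclusion yields $(s,t),(t,s)\in D$ for all $s\in\{\ov y,y\}$ and $t\in\{[xa],\ov x\}$. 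From this I extract the three facts I need: $(\ov y,[xa])\in D$, and $(\ov x,y)\in D$, the latter giving $(\ov y,x)\in D$ by (P3); hence $z_0=[\ov y x]$ and $\ov{z_0}=[\ov x y]$ are defined.

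It then remains to assemble $b=[z_0a\ov{z_0}]$ from these defined products using only (P4). First, $(\ov y,x),(x,a)\in D$ let me rebracket $[\ov y x]a=\ov y[xa]$, which is defined because $(\ov y,[xa])\in D$; thus $z_0a$ is defined with $[z_0a]=[\ov y[xa]]$. Next, (P4) applied to $(\ov y,[xa],\ov x)$ shows $[\ov y[xa]]\ov x$ is defined and equals $\ov y[[xa]\ov x]=\ov y c$. Finally, (P4) applied to $([z_0a],\ov x,y)$, together with $[\ov y c]y=b$ (the chosen bracketing of $b$), shows that $[z_0a]\ov{z_0}=[z_0a](\ov x y)$ is defined and equals $b$. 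Hence $z_0a\ov{z_0}\in P$ and $b=[z_0a\ov{z_0}]$, so $b$ is a preconjugate of $a$, as required; the alternative, $c\in G_P$, is the other branch of the conclusion.

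The main obstacle is organising the \emph{single} application of (P7) so that its symmetric conclusion delivers precisely the cross-products $(\ov y,x)$ and $(\ov x,y)$ together with the auxiliary product $(\ov y,[xa])$ that fuse the two preconjugations into one; once these are in hand, the reassembly is a short, if fiddly, chain of (P4) rebracketings, and the role of $c\notin G_P$ is merely to make (P7) applicable. The bracketing case analysis is routine and I would carry out only the representative case above in full.
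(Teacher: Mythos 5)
Your proposal is correct and takes essentially the same route as the paper's own proof: both exploit the symmetry of preconjugation to write $c$ as a preconjugate of $a$ and of $b$, invoke (P7) exactly once (licensed by $c\notin G_P$) to get the cross product $\ov y x$ (the paper's $v\ov u$) defined in $P$, and then exhibit $b$ as the preconjugation of $a$ by that element. The only difference is one of detail: the paper asserts the definedness of the final product $[(v\ov u)a(u\ov v)]$ without comment, whereas you spell out the chain of (P4) rebracketings (and the appeal to the embedding $P\hookrightarrow U(P)$) that justifies it.
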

\begin{proof}
Let $c=[\ov u a u]=[\ov v b v]$, for some $u,v\in P$. Then 
either $\ov u a\in P$ or $au\in P$.  Assume $\ov u a\in P$. 
We have $b=[v c\ov v]\in P$, so either $vc\in P$ or $c \ov v\in P$.
Assume $c=[[\ov u a] u]\notin H$. Then  $vc\in P$ together 
with (P7) implies $u\ov v\in P$. Similarly $c\ov v\in P$ implies
$u\ov v\in P$. By symmetry, if $au\in P$ and $c\notin H$ then
again $u\ov v\in P$. Therefore, either $c\in H$ or 
$b=[(v\ov u) a (u\ov v)]$, a preconjugate of $a$.  
\end{proof}

In pregroups
in which axiom (P8) holds, elements of  
$P\sm G_P$ behave well with
respect to preconjugation. 
\begin{lemma}\label{lem:p8}
Let $P$ be a pregroup satisfying axiom (P8) and $H= G_P$ its canonical subgroup. Let $a\in P\sm H$
and $b\in P$. 
\be[1.)]
\item If $b$ is a preconjugate of $a$ then $b=[\ov h a h]$, for some
element $h\in H$.
\item If $b$ is conjugate to $a$ then $b$ is a preconjugate of $a$. 
\ee
\end{lemma}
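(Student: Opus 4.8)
The plan is to establish part 1 by feeding the product that witnesses the preconjugation into axiom (P8), and then to deduce part 2 from \refcor{cor:hugo} by iterating part 1 along a chain of preconjugations. Recall that $b$ being a preconjugate of $a$ means $b = [\ov h a h]$ for some $h \in P$ with this product defined (taking the conjugator to be $\ov h$).

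For part 1, I would start from such a $b = [\ov h a h]$. By the bracketing conventions either $[\ov h a] \in P$ or $[a h] \in P$, and I would treat the two alternatives symmetrically. Taking the first, set $d = [\ov h a]$ and apply (P8) to the pair $(\ov h, a)$: since $[\ov h a] = d$ we get $\ov h \in H$, $a \in H$, or $d \in H$, and because $a \notin H$ and $H$ is closed under the involution this reduces to $h \in H$ or $d \in H$. If $h \in H$ there is nothing to prove. The delicate case is $d \in H$, where (P8) applied to the outer multiplication $[d\,h]$ says nothing. Here I would instead use $a =_{U(P)} [h d]$ and $b =_{U(P)} [d h]$ to check that $b =_{U(P)} [d a \ov d]$; since $d \in H = G_P$ every product with $d$ is defined and $P$ embeds in $U(P)$ by \refthm{thm:stall}, this exhibits $b$ as the preconjugate $[\ov{h'} a h']$ of $a$ with $h' = \ov d \in H$. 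The alternative $[a h] \in P$ is dual: with $e = [a h]$, axiom (P8) gives $h \in H$ or $e \in H$, and in the latter case $b =_{U(P)} [\ov e a e]$, a preconjugation by $e \in H$.

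For part 2, suppose $b$ is conjugate to $a$ in $U(P)$. As $a \in P \sm H \subseteq \Gamma$ is non-trivial in $U(P)$ by \refthm{thm:stall}, so is $b$; hence $b \in \Gamma$, and both $a,b$ are cyclically reduced of length one. I would invoke \refcor{cor:hugo}.\ref{hugoiii} to obtain a chain $a = b_0, b_1, \dots, b_m = b$ in which each $b_{i+1}$ is a preconjugate of $b_i$, and then prove by induction that every $b_i$ lies in $P \sm H$. Indeed $b_0 = a \notin H$, and if $b_i \notin H$ then part 1 yields $b_{i+1} = [\ov{h_i} b_i h_i]$ with $h_i \in H$; were $b_{i+1} \in H$, then $b_i =_{U(P)} [h_i b_{i+1} \ov{h_i}]$ would be a defined product of elements of $H$, forcing $b_i \in H$, a contradiction. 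Hence part 1 applies at every step, and putting $h = h_0 \cdots h_{m-1} \in H$ gives $b =_{U(P)} [\ov h a h]$; as $h \in G_P$ this product is defined in $P$, so $b$ is a preconjugate of $a$ by an element of $H$.

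The hard part will be the nested case in part 1 when the partial product $[\ov h a]$ (or $[a h]$) already belongs to $H$: there axiom (P8) is vacuous on the remaining multiplication, and the key move is to recognise that this partial product can itself serve as the conjugating element, collapsing the preconjugation to one by a canonical element. Everything else is routine manipulation of defined products, justified throughout by the embedding $P \hookrightarrow U(P)$.
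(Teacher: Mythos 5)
Your proof is correct and takes essentially the same route as the paper's: part 1 applies (P8) to the defined partial product $[\ov h a]$ (or $[a h]$) and, in the delicate case where that product lies in $H$, promotes it to the new conjugator, exactly the paper's move $[\ov c a]=h\in H$, $b=[h a \ov h]$; part 2 chains preconjugations obtained from \refcor{cor:hugo}.\ref{hugoiii} and composes the conjugators inside $H=G_P$. The only difference is that you make explicit the induction showing each intermediate element $b_i$ stays in $P\sm H$ (so that part 1 applies at every step), a point the paper's terser proof leaves implicit.
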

\begin{proof}
\be
\item If $b=[\ov c \ a c]$, where $c\in P$,  then either
$\ov c a\in P$ or $ac\in P$. By symmetry, assume $\ov c a\in P$. 
If  $c\notin H$ then (P8) implies $[\ov c a]=h\in H$, so $c=a\ov h$. 
Thus $b=[\ov c a c]=[h a\ov h]$, as required.
\item From \refcor{cor:hugo}.\ref{hugoiii} there exist a sequence of
elements $a=b_0,\ldots ,b_n=b$, of $P$, such that $b_{i+1}$ is a 
preconjugation of $b_i$, for all $i$. As each preconjugation is by
an element of $H$ it follows that $b$ is in fact a preconjugate of $a$. 
\ee
\end{proof}
\section{The conjugacy problem in amalgamated products and HNN-extensions}\label{sec:cpag}

\subsection{Conjugacy in amalgamated products}\label{cag}
As in Section \ref{hnnag}, the defining pregroup for the group $G = A*_H B$ can be chosen to be 
$P = A \cup B$; and  
the common subgroup $H$ is then equal to the canonical subgroup $G_P$. 
Therefore $P$ satisfies (P6). 
Conjugacy of elements of a free product with amalgamation is
described in 
\cite{mks66}, which now follows easily
from of \refcor{cor:hugo} and \refthm{thm:conjugation} as we show below.
First we state the theorem.
\begin{theorem}[\cite{mks66}, Thm.{} 4.6]\label{mks}
Let $G=A*_H B$. Every
element of $G$ is conjugate to a cyclically reduced element of $G$.
(That is an element $g$ which
can be written as  $g = g_1\cdots g_n$ with $g_i \in A\cup B$ and either $n=1$ or $g_{i-1}$ and $g_i$ do not lie
in the same factor for all $i \in \ZZ/ n \ZZ$.) If $g$ is a cyclically reduced element of $G$ then the
following hold.
\be
\item\label{mksi} If $g$ is conjugate to $h\in H$ then $g\in A\cup B$ and there exists
a sequence $h,h_1,\ldots ,h_\ell,g$ where $h_i\in H$ and consecutive terms
are conjugate in some factor.
\item\label{mksii} If \ref{mksi} does not hold and $g$ is conjugate to an element $f\in A\cup B$, then $g$ and $f$ belong to the same factor,
$A$ or $B$, and they are conjugate in that factor.
\item\label{mksiii} If $n =|g|\geq 2$, then \ref{mksi} and \ref{mksii} do not hold. If $g$ is conjugate to a cyclically reduced element $f$, then $f$ can be written as $f=h^{-1}g_i\cdots
g_ng_1\cdots g_{i-1}h$, for some $h\in H$ and $i$ with $1\le i\le n$.
\ee
\end{theorem}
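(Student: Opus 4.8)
The plan is to read everything off the cyclic rewriting machinery already developed for $U(P)$, applied to the defining pregroup $P = A \cup B$, for which $H = G_P$ and (P6) holds. The first step is to record the dictionary between the two notions of ``cyclically reduced''. Since $ab$ is defined in $P = A \cup B$ exactly when $a$ and $b$ lie in a common factor, a cyclic word over $\Gam$ is geodesic with respect to $C^\dag(S)$ (our cyclically reduced) if and only if it has length one or no two cyclically consecutive letters lie in the same factor, which is precisely the MKS condition. By \refthm{thm:mainpre} the system $C^\dag(S)$ is geodesically perfect, so applying length-reducing rules to any cyclic word produces a cyclically reduced conjugate; this yields the opening assertion of the theorem. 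Thereafter I may assume $g$ is cyclically reduced, and by \refcor{cor:hugo}.\ref{hugoi} every cyclically reduced conjugate of $g$ has the same length as $g$. The mechanics behind parts \ref{mksi} and \ref{mksii} come from \refcor{cor:hugo}.\ref{hugoiii}: if $|g|=1$ and $h$ is a cyclically reduced conjugate, then $g$ and $h$ are joined by a chain of preconjugations. In $P = A \cup B$ a preconjugation of a letter $u$ by $c$ requires $cu\ov c$ to be defined, forcing $c$ and $u$ into a common factor $X$ and giving $[cu\ov c]=cuc^{-1}$ computed in $X$; hence consecutive terms of the chain are conjugate inside a single factor, and (the relation being symmetric) the chain may be read from either end. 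The trapping observation I will exploit is that if a term lies in $X\sm H$ then $X$ is the only factor containing it, so the next conjugation is again in $X$.

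For part \ref{mksi}, with $h\in H$ the equal-length statement forces $|g|=1$, so $g\in A\cup B$. Reading the chain from $h$, I would extract the subsequence $h=q_0,q_1,\ldots,q_m$ of its visits to $H$; by the trapping observation all terms strictly between two consecutive $H$-visits lie in one factor, so consecutive $q_j$ are conjugate within a single factor, and after the last visit $q_m$ the chain is trapped in the factor containing $g$, making $g$ conjugate to $q_m$ there. This is exactly the required sequence $h,h_1,\ldots,h_\ell,g$ with $h_j=q_j$ (the degenerate case $g\in H$ simply terminates the chain in $H$). For part \ref{mksii}, failure of \ref{mksi} means no term of the chain lies in $H$ (otherwise $g$ would be conjugate to that element of $H$), so the trapping observation keeps the entire chain inside a single factor $X$; thus $g$ and $f$ both lie in $X$ and are conjugate there.

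Part \ref{mksiii} follows with little extra work from what is already in place. For $n=|g|\ge 2$, \refcor{cor:hugo}.\ref{hugoi} forces every cyclically reduced conjugate to have length $n\ge 2$; since any element of $A\cup B$ is cyclically reduced of length at most one, $g$ has no conjugate in $A\cup B$ and neither \ref{mksi} nor \ref{mksii} can hold. If $f$ is a cyclically reduced conjugate then \refthm{thm:conjugation} gives $f=[bg_i]\cdots g_n g_1\cdots[g_{i-1}\ov b]$ with $b\in P$ and $bg_i,\,g_{i-1}\ov b\in P$; because $P$ satisfies (P6), \refrem{rem:p6} places $b\in G_P=H$. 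Writing $h=\ov b\in H$ then gives $f = h^{-1}g_i\cdots g_n g_1\cdots g_{i-1}h$ in $U(P)$, as required.

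I expect the main obstacle to be part \ref{mksi}: converting the raw preconjugation chain of \refcor{cor:hugo}.\ref{hugoiii} into one whose intermediate terms all lie in $H$. The crucial input is the trapping fact that once the chain leaves $H$ it cannot change factors until it next returns to $H$; this is what lets me compress to the $H$-valued subsequence $q_0,\ldots,q_m$ and peel the final factor-conjugation off onto $g$. The remaining difficulty is purely bookkeeping, namely organising this compression cleanly and separating out the degenerate case $g\in H$, where the chain ends in $H$ and $g$ coincides with the last $q_m$.
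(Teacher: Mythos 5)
Your proposal is correct, and its skeleton is the paper's: part \ref{mksiii} follows from \refthm{thm:conjugation} together with (P6) and \refrem{rem:p6}; the exclusion of \ref{mksi}/\ref{mksii} when $|g|\ge 2$ comes from \refcor{cor:hugo}.\ref{hugoi}; and parts \ref{mksi}/\ref{mksii} are read off the preconjugation chain supplied by \refcor{cor:hugo}.\ref{hugoiii}. The one genuine divergence is how that chain $p_0,\ldots,p_\ell$ is processed. The paper invokes \reflem{lem:p7}, which rests on the extra axiom (P7) (which holds for the amalgam pregroup): any intermediate term $p_i\notin H$ may be deleted, since $p_{i+1}$ is then a preconjugate of $p_{i-1}$; iterating, either all intermediate terms lie in $H$ (giving \ref{mksi}) or the chain collapses to length one (giving \ref{mksii}). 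You instead use the ``trapping'' property special to $P=A\cup B$ --- an element of $X\sm H$ lies in exactly one factor $X$, so every preconjugation of it is by an element of $X$ and lands in $X$ --- and then compress the chain to its $H$-visits (for \ref{mksi}) or note that it never leaves one factor (for \ref{mksii}). Your argument is sound and slightly more elementary, as it avoids introducing (P7) and \reflem{lem:p7} at all; it also yields a marginally more informative certificate for \ref{mksi}, namely the $H$-valued subsequence with factor-conjugacy between consecutive visits, where the paper simply deletes the non-$H$ terms. What the paper's route buys is uniformity: \reflem{lem:p7} and its companion \reflem{lem:p8} (axiom (P8)) let the same shortening template be reused verbatim for Collins' Lemma in HNN-extensions (\refthm{th:Collins}), where your trapping observation fails --- an element of $Ht^{\pm 1}H$ admits preconjugations by arbitrary elements of $H$, so membership in ``one factor'' no longer constrains the conjugators.
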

\begin{proof}
Assertion \ref{mksiii} is a trivial consequence of \refthm{thm:conjugation}.
Indeed, for $n \geq 2$ \refthm{thm:conjugation}  says  that $f=b^{-1}g_i\cdots
g_ng_1\cdots g_{i-1}b$ where $b, b^{-1}g_i,g_{i-1}b \in A\cup B$. However, 
$P$ satisfies (P6), hence $b \in H$ by \refrem{rem:p6}. Moreover, \ref{mksi} or \ref{mksii} implies  $n=1$ by \refcor{cor:hugo}, \ref{hugoi}. 

Thus, let  $n=1$ and $g ,p \in A\cup B$ be conjugate to each other. 
  Applying \refcor{cor:hugo}, \ref{hugoiii}, 
 there is a sequence $p= p_0,p_1,\ldots ,p_\ell=g$ where consecutive terms
are preconjugate, i.e., consecutive terms
are conjugate  in some factor. 
From Lemma \ref{lem:p7}, either every $p_i$ is in $H$ or the sequence may 
be shortened. Thus, if 
 $g$ is not conjugate to any $h\in H$,  we may assume $g \in A \sm H$ and
$l=1$, so
$p$ is a preconjugate of $g$; that is of the form  $\oi a g a$ for some $a\in A$.
Hence $p=\oi a g a\in A \sm H$, giving \ref{mksii}. 

Otherwise every $p_i$ is in $H$ and \ref{mksi} holds.

\end{proof}

\subsection{Conjugacy in HNN-extensions}\label{sec:hnn}
As in Section \ref{hnnag}, 
for $G = \HNN(H, t;\;  t^{-1}At =B)$ the defining pregroup can be chosen as 
$P = H \cup Ht^{-1}H \cup HtH$; and  
the base group $H$ is then equal to the canonical subgroup $G_P$. 
Therefore $P$ satisfies (P8). 

The word problem in $G$ can be solved, if we can effectively perform  Britton reductions, see e.g. in \cite{ls77}: we read non-trivial elements in $G$ as words 
over $H \sm \oneset{1}$ and in $t^{\pm 1}$.   
Whenever we see a factor in $t^{-1}At$, then  we replace it by the corresponding factor 
in $B$. Similarly, whenever we see a factor in $tBt^{-1}$, then  we replace it by the corresponding factor 
in $A$. This leads to a normal form where each $g$ becomes an element in $H$:
that is, for some uniquely defined $t$-sequence of minimal length $n\geq 1$, the element $g$  has the form 
\[g = h_0t^{\eps_1}h_1 \cdots t^{\eps_{n-1}}h_{n-1}t^{\eps_n}h_{n}.\]

In order to perform a  \emph{cyclic reduction} we remove  $h_0t^{\eps_1}h_1$
from the left and put it at the right. We continue with  Britton and cyclic reductions for as long as possible and eventually reach 
a (Britton) cyclically reduced form. Clearly every cyclically reduced 
form $g$, with non-trivial $t$-sequence, is conjugate to and element of the
form 
\begin{equation}\label{eq:crhnn}
t^{\eps_1}z_1 \cdots t^{\eps_{n-1}}z_{n-1}t^{\eps_n}z_{n},
\end{equation}
where $t^{\eps_1}, \ldots, t^{\eps_n}$ is the $t$-sequence of $g$, $z_i\in H$
and, for all $i$ we have 
\[t^{\eps_i}z_i t^{\eps_{i+1}}\notin t^{-1}At\cup tBt^{-1},\]
(subscripts modulo $n$).   Cyclically reduced elements which either belong
to $H$ or are written in the form
of \eqref{eq:crhnn} are called \emph{standard} cyclically reduced elements
of $G$. In terms of the pregroup $P$, every pregroup cyclically reduced word
can be written as a preconjugate, by an element of $H$, 
 of a standard cyclically reduced word; 
and conversely, every standard cyclically reduced
word is cyclically reduced with respect to $P$. 

The conjugacy theorem for HNN-extensions, Collins' Lemma, can be found in  \cite[Chapter IV, Theorem 2.5]{ls77}, and is stated for
standard cyclically reduced words. 
In analogy to amalgamated products  we restate it as follows. 

\begin{theorem}[D.J. Collins (1969)]
\label{th:Collins}  Let $G = \HNN(H, t;\;  t^{-1}At =B)$ 
be an HNN-extension over 
two isomorphic subgroups $A,B$ in some base group $H$. 
Every element of $G$ is conjugate to a standard cyclically reduced element.
  Let $g$ and $f$ be
conjugate, 
standard cyclically reduced elements of $G$.  
Then the following (mutually exclusive) statements hold. 
\begin{enumerate}
\item\label{coli} If $f\in A\cup B$ then there exists a
 sequence $f= c_0, c_1,\dots, c_\ell=g$ of elements of $A\cup B$, such
that, for $i=1,\ldots, \ell$,  we have 
$c_i=k_i^{-1}t^{-\del_i}c_{i-1}t^{\del_i}k_i$, with  
$k_i\in H$, $\del_i=\pm 1$  and $t^{-\del_i}c_{i-1}t^{\del_i}\in t^{-1}At\cup tBt^{-1}$.

\item\label{colii} If $g$ is not conjugate to an element of $A\cup B$ and 
$f\in H$  then $g$ and $f$ are conjugate by an element of
 the base group $H$.

\item\label{coliii} If $f$ is not  in $H$ then there exist 
 $n\ge 1$, $z_i\in H$, $\eps_i=\pm 1$, $1\le j\le n$ 
and $c\in A\cup B$, such that  
$g=t^{\eps_1}z_1\cdots t^{\eps_n}z_n$  and  
 $f$ has $t$-sequence of length $n$ and is equal in $G$ to  
\[c^{-1}t^{\eps_j}z_j\cdots
t^{\eps_n}z_nt^{\eps_1}z_1\cdots t^{\eps_{j-1}}z_{j-1}c,\]
with $c \in A$, if $\eps_j = -1$; and $c \in B$, if $\eps_j = 1$.
\end{enumerate}
\end{theorem}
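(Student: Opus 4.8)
The plan is to mimic the amalgamated-products argument (the proof of \refthm{mks}), replacing the role of (P6)/(P7) and \reflem{lem:p7} by (P8) and \reflem{lem:p8}, and then to translate the resulting pregroup statements into Britton normal form. Throughout I use that $P=H\cup Ht^{-1}H\cup HtH$ has canonical subgroup $G_P=H$ and satisfies (P8), hence also (P6); that a standard cyclically reduced word $t^{\eps_1}z_1\cdots t^{\eps_n}z_n$ is precisely a pregroup cyclically reduced word $g_1\cdots g_n$ with $g_k=t^{\eps_k}z_k\in P\sm H$, so that the $t$-sequence length $n$ equals the pregroup length $|g|$; and that every element of $G$ is conjugate to a standard cyclically reduced element (combine \refthm{thm:mainpre} with the remarks preceding the theorem, that every pregroup cyclically reduced word is an $H$-preconjugate of a standard one). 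By \refcor{cor:hugo}.\ref{hugoi} conjugate standard cyclically reduced words have the same length $n$, and since the total $t$-exponent is a conjugacy invariant, $f\in H$ exactly when $n=1$ and the $t$-exponent of $f$ vanishes; in that case $g\in H$ as well. This yields the three manifestly mutually exclusive cases: $f\notin H$ (case~\ref{coliii}); $f\in A\cup B$ (case~\ref{coli}); and $f\in H$ with $g$ not conjugate into $A\cup B$ (case~\ref{colii}).

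For case~\ref{coliii} ($f\notin H$) I would first treat $n\ge 2$. Here \refthm{thm:conjugation} produces, for some $j$, the expression $f=[bg_j]\cdots g_n g_1\cdots[g_{j-1}\ov b]$ with $b\in P$ and $bg_j,\,g_{j-1}\ov b\in P$. Since (P8) implies (P6), \refrem{rem:p6} forces $b\in G_P=H$, so $\ov b=b^{-1}$, and reading the pregroup letters back as syllables gives $f=_G b\,(t^{\eps_j}z_j\cdots t^{\eps_{j-1}}z_{j-1})\,b^{-1}$; putting $c=b^{-1}\in H$ exhibits $f$ as a conjugate of the cyclic permutation of $g$ starting at $j$. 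The remaining point is to sharpen $c\in H$ to $c\in A$ or $c\in B$ according to the sign of $\eps_j$: this is a one-line Britton reduction, pushing $c$ across the leading pivot letter $t^{\eps_j}$ and using that $f$ is standard (no leading $H$-coefficient), which forces $t^{-\eps_j}c^{-1}t^{\eps_j}\in H$ and hence the membership recorded in the statement. For $n=1$ we have $g,f\in P\sm H$ with common $t$-exponent $\eps_1$; \reflem{lem:p8}, part~2, says $f$ is a preconjugate of $g$, and part~1 writes $f=[\ov h g h]=h^{-1}t^{\eps_1}z_1h$ with $h\in H$, to which the same Britton computation applies.

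For cases~\ref{coli} and~\ref{colii} we have $f\in H$, hence $g\in H$ and $|g|=1$, so \refcor{cor:hugo}.\ref{hugoiii} supplies a chain $g=p_0,p_1,\dots,p_\ell=f$ in which each $p_k$ is a preconjugate of $p_{k-1}$. The key observation is that a preconjugation of a length-one element of $H$ by a pregroup letter $c\in P$ is either a conjugation by an element of $H$ (when $c\in H$), or, when $c\in Ht^{\pm1}H$, it forces the conjugated element into $A\cup B$ and crosses a single stable letter, i.e.\ has the form $p_k=k^{-1}t^{-\del}p_{k-1}t^{\del}k$ with $k\in H$, $\del=\pm1$ and $t^{-\del}p_{k-1}t^{\del}\in t^{-1}At\cup tBt^{-1}$. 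In case~\ref{coli}, $f\in A\cup B$, and collecting the $t$-crossing steps (absorbing the purely $H$-valued conjugations into the $k_i$) produces exactly the asserted sequence inside $A\cup B$. In case~\ref{colii}, if any step crossed a $t$ then $g$ would be conjugate to an element of $A\cup B$, contrary to hypothesis; hence every $c$ occurring in the chain lies in $H$, and $g$ and $f$ are conjugate by the product of these elements, which lies in $H$.

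The main obstacle is the bookkeeping in the last two paragraphs: all the genuinely HNN-specific content—the appearance of $t^{-1}At\cup tBt^{-1}$, the decomposition of a preconjugation into an $H$-conjugation versus a single $t$-crossing, and the precise $A$-versus-$B$ membership of the conjugator relative to \eqref{eq:crhnn}—lives in converting the abstract pregroup preconjugations supplied by \refthm{thm:conjugation} and \reflem{lem:p8} into Britton reduced words, and in checking that the element so obtained is again standard cyclically reduced of the same $t$-length. None of these steps is deep, but each requires careful, convention-sensitive manipulation of the stable letter $t$.
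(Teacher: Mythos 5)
Your proposal follows essentially the same route as the paper's own proof: for case~\ref{coliii} with $t$-length at least $2$ you invoke \refthm{thm:conjugation} together with (P6) to get a conjugator in $H$; for $t$-length $1$ with $f\notin H$ you invoke \refcor{cor:hugo} and \reflem{lem:p8}; and for cases~\ref{coli}/\ref{colii} you analyse the preconjugation chain supplied by \refcor{cor:hugo}.\ref{hugoiii} using (P8), merging the $H$-conjugations and isolating the $t$-crossings. This is exactly the paper's argument, which phrases the last part as a chain-shortening argument via \reflem{lem:p8} and (P8). Your extra observations (that the preconjugation chain of an element of $H$ stays in $H$, and that the $t$-exponent sum identifies when $f\in H$) are harmless substitutes for the paper's corresponding steps.

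One concrete point needs attention. You assert that pushing $c$ across the pivot, i.e.\ requiring $t^{-\eps_j}c^{-1}t^{\eps_j}\in H$, yields ``the membership recorded in the statement.'' Under the paper's convention $t^{-1}At=B$ (so a pinch $t^{-1}xt$ requires $x\in A$ and $txt^{-1}$ requires $x\in B$), that condition forces $c\in A$ when $\eps_j=+1$ and $c\in B$ when $\eps_j=-1$ --- the \emph{transpose} of the labels in the statement, which asks for $c\in A$ if $\eps_j=-1$ and $c\in B$ if $\eps_j=+1$. A direct check in $G=\langle a,t \mid t^{-1}a^2t=a^3\rangle$ with $g=ta^2$, $f=ta$ confirms this: the only conjugators $c\in H$ with $f=c^{-1}gc$ lie in $A=\langle a^2\rangle$, none in $B=\langle a^3\rangle$, even though $\eps_1=+1$. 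So either the statement's labels contain a typographical slip, or your final inference does not close; either way the mismatch should be flagged rather than asserted to agree. For comparison, the paper's own proof never derives this refinement at all --- it stops at $c\in H$ and simply says that since $g$ and $f$ are standard, statement~\ref{coliii} holds --- so on this point you attempted strictly more than the paper, and the sign bookkeeping is precisely where the extra care is required.
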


\begin{proof}
As in 
the proof of \refthm{mks}, it follows from \refthm{thm:conjugation} that
 if $g=t^{\eps_1}z_1\cdots t^{\eps_n}z_n$,
 where $n\ge 2$,  then $f$  is equal in $G$ to
 $c^{-1}t^{\eps_j}z_j\cdots
t^{\eps_n}z_nt^{\eps_1}z_1\cdots t^{\eps_{j-1}}z_{j-1}c$, for some  $c\in P$, 
and as (P6) holds we have $c\in H$.

The pregroup $P$ for $G$ satisfies (P8) and $H$ is the canonical subgroup. 
Therefore $P\sm H= HtH \cup Ht^{-1}H$. 
Hence, if $f=t^{\eps}z$, for some $z\in H$ and $\eps = \pm 1$, then from
\refcor{cor:hugo} and \reflem{lem:p8} every cyclically reduced conjugate of $f$ has the 
form  
$c^{-1}t^{\eps}z c$, for some $c\in H$.  Since $g$ and $f$ are standard, 
statement
\ref{coliii} holds in both these cases.

This leaves the case where $f\in H$. As in the  proof of \refthm{mks}, 
applying \refcor{cor:hugo}, \ref{hugoiii}, 
 there is a sequence $f= p_0,p_1,\ldots ,p_\ell=g$ of elements of $P$, 
where consecutive terms
are preconjugate, say  $p_i=q_i^{-1}p_{i-1}q_i$, with $q_i\in P$. 
If $p_i\in P\sm H$ then, from \reflem{lem:p8}, the
sequence may be shortened, at least while $\ell>1$. Then, since $p_{\ell-1}\in H$ we have, from (P8), $g\in H$. Hence we may assume that either $p_i\in H$, for 
$i=0,\ldots, \ell$.     
 Again, if $q_i\in H$ and $\ell>1$ then the sequence may be shortened,  
so we may assume that 
either 
$q_i\in P\sm H$, for $i=1,\ldots , \ell$; or that $\ell=1$. 

Applying (P8), $q_i^{-1}p_{i-1}q_i=p_i\in H$, with $q_i\notin H$, implies
that $p_i$ is conjugate to an element of $A\cup B$.  
If $g$ is not conjugate to an element of $A\cup B$ it follows that $\ell=1$, 
and $q_1\in H$, as required in statement
\ref{colii}. Otherwise \ref{coli} holds.  
\end{proof}

\section{The conjugacy problem in  virtually free groups}\label{sec:cvfg}

We consider only the case of finitely generated virtually free groups. 
Virtually free groups are hyperbolic, and
it has been shown by Epstein and Holt \cite{EpsteinH06} that the 
conjugacy problem for hyperbolic groups 
can be solved in linear time. Hence,   the following is a special case of \cite{EpsteinH06}. However, our algorithm is much simpler and 
more direct. 
It can be implemented in a straightforward way using finite pregroups. 

\begin{proposition}\label{prop:lintime}
The conjugacy problem in finitely generated  virtually free groups can be solved in linear time.  
\end{proposition}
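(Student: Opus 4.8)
The plan is to realise $G$ as the universal group $U(P)$ of a finite pregroup and then imitate the free-group algorithm of \refsec{sec:sn}: cyclic reduction followed by a pattern-matching search. First I would invoke \refprop{prop:grim} to fix a finite pregroup $P$ with $U(P)\cong G$ and take $S=S(P)$, the Thue system of \refdef{def:ug}; by \refthm{thm:stall} it is a standard, $2$-monadic, geodesically perfect Thue system, and since $P$ is finite it has finitely many rules. By \refthm{thm:mainpre} the induced cyclic system $C^\dag(S)$ is geodesically perfect, and it too is finite. All of $P$, $S$, $C^\dag(S)$, the finite table of the conjugacy relation restricted to letters of $\Gam=P\sm\{\eps\}$, and the preconjugation relation on $\Gam$ are fixed finite data, precomputed once and independent of the input.

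Given input words $u,v$, I would first rewrite them over $\Gam$ (linear, each generator having a bounded expression) and then compute cyclically reduced forms $\wh u,\wh v$. Because $S$ is $2$-monadic its only length-reducing rules are $ab\to[ab]$ and $ab\to 1$, so string reduction is a stack-based pushdown process running in linear time; adjoining a seam pass that repeatedly combines the two ends (each combination strictly shortening the word) gives cyclic reduction in linear time overall. By \refcor{cor:hugo}.\ref{hugoi}, if $|\wh u|\neq|\wh v|$ then $u$ and $v$ are not conjugate and we stop. If $|\wh u|=|\wh v|\le 1$ the answer is a constant-size lookup in the precomputed letter table; this length-one case is precisely the locus of the conjugacy obstruction flagged in \refsec{sec:intro}, but for a \emph{finite} pregroup it is decidable once and for all.

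The main case is $n=|\wh u|=|\wh v|\ge 2$. Here \refthm{thm:conjugation} reduces conjugacy to the existence of a single cyclic permutation $g_i\cdots g_{i-1}$ of $\wh u=g_1\cdots g_n$ and an element $b\in P$ with $[bg_i],[g_{i-1}\ov b]\in P$ such that $\wh v =_{U(P)} [bg_i]\,g_{i+1}\cdots g_{i-2}\,[g_{i-1}\ov b]$. Since a preconjugation alters only the two boundary letters, the interior block of this rotation is unchanged, so I would search for the interior block of $\wh v$ inside the doubled word $\wh u\wh u$ with a Knuth--Morris--Pratt matcher \cite{KMP} in time $O(n)$, and for each of the (amortised linearly many) candidate rotations verify the two boundary conditions by a constant-size check against $P$.

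The hard part will be establishing genuine linearity in this last step and, more delicately, handling the slack hidden in ``$=_{U(P)}$'': the preconjugated rotation need only equal $\wh v$ as a group element, so by \refthm{thm:stall}.\ref{stalliii} it is merely an interleaving of $\wh v$, and an interleaving may alter every interior letter, not just the boundary. I expect to neutralise this by first normalising $\wh u$ and $\wh v$ to interleaving-canonical representatives, using the finiteness of $P$ to pick a fixed representative in each interleaving class (equivalently, a canonical transport of double-coset representatives along the word, in the spirit of \reflem{lem:cleave}), so that the interior comparison becomes exact string matching and the boundary check becomes a finite lookup. Verifying that this normalisation is correct and computable in linear time, together with the linear-time cyclic reduction, are the two steps requiring the most care; the rest then follows from \refcor{cor:hugo} and \refthm{thm:conjugation}.
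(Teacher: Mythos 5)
Your skeleton is the paper's own: a finite pregroup via \refprop{prop:grim}, linear-time cyclic reduction, the length test and length-one table lookup justified by \refcor{cor:hugo}, then \refthm{thm:conjugation} plus Knuth--Morris--Pratt for $n\ge 2$. You have also correctly located the crux: the preconjugated rotation of $\wh u$ equals $\wh v$ only in $U(P)$, hence only up to an interleaving (\refthm{thm:stall}.\ref{stalliii}), which can alter every letter. But the repair you propose is exactly the step you do not carry out, and it is where the entire content of the linearity claim lives. Choosing ``a fixed representative in each interleaving class'' is easy once $P$ is finite; the real problem is \emph{coherence across rotations}. The $n$ rotations of $\wh u$ are $n$ distinct group elements, and the canonical form of a rotation of a canonical word is in general not a rotation of that word, nor letterwise close to it --- it differs by yet another interleaving. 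So with a generic canonical choice you would have to normalise each rotation separately, at cost $O(n)$ per rotation, hence $O(n^2)$ overall, and your exact string matching inside $\wh u\wh u$ has no correct text to match against. The missing idea, stated concretely, is: a normal form whose values on \emph{all} $n$ rotations of $\wh u$ can be read off from a single linear-time computation.

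The paper supplies this via shortlex normal forms and a squaring trick. Order $\Gam$, compute the shortlex normal form $g=g_1\cdots g_n$ of $\wh u$ (left to right, in linear time), then the shortlex normal form of $g^2$, which has the shape $g_1\cdots g_{n-1}[g_n\ov{a_1}][a_1g_1\ov{a_2}]\cdots[a_ng_n]$ for suitable $a_i\in P$. Since every factor of a word in shortlex normal form is again in shortlex normal form, the length-$n$ factor starting at position $i$ is the normal form of the group element $g_i\cdots g_ng_1\cdots g_{i-1}\cdot\ov{a_i}$, and the paper deduces that the normal form of the rotation $g_i\cdots g_ng_1\cdots g_{i-1}$ itself is $f'[a_{i-1}g_{i-1}]$, where $f'$ is the length-$(n-1)$ factor of the normal form of $g^2$ starting at position $i$. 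This is precisely the coherence your ``canonical transport'' would need to provide. The algorithm is then: absorb the preconjugator by replacing $f=\wh v$ with each of the constantly many words $\ov b f b$, $b\in P$; compute the shortlex normal form $f'p$ of $f$; erase its last letter; KMP-search $f'$ in the normal form of $g^2$; and at each match position $i_j$ perform the constant-time test $[p\ov{a_{i_j}}]=[a_{i_j-1}g_{i_j-1}\ov{a_{i_j}}]$, the rotations with $i\in\{1,2,n\}$ being handled by constantly many calls to the linear-time word problem. Without this mechanism (or an equivalent one) your argument is incomplete at its central point, even though everything surrounding that point matches the paper.
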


\begin{proof}
 A  finitely generated  virtually free group $G$ is  the universal 
 group  $U(P)$ of some finite pregroup $P$, see  \refprop{prop:grim}. 
 As above let $\Gam = P \sm \{\eps\}$. 
 
 By a standard procedure involving \refthm{thm:stall} we can compute
 cyclically reduced elements  in linear time. Thus, we may assume that our input 
 words are given as $g = g_1 \cdots g_n$ and $f = f_1 \cdots f_n$ with $g_i$, $f_i \in \Gam$ such that both sequences are cyclically reduced. 
 For $n=1$ we can use table look-up. Hence we may assume $n\geq2$ henceforth. 
 
 Now, let us put a linear order on $\Gam$. Then the shortlex normal form of $g$
 begins with a letter $[g_1\ov {a_1}]$ such that the geodesic length 
 of $a_1g_2 \cdots g_n$ is $n-1$. But this implies $a_1g_2 \in P$. 
 Thus, working from left to right we may compute   the shortlex normal form of $g$, in linear time; and we may assume that this is $g_1 \cdots g_n$

 We know $f = [bg_{i}] \cdots g_n g_1\cdots [g_{i-1}\ov b] \in U(P)$ by 
 \refthm{thm:conjugation}. Thus, $\ov b f b = g_{i} \cdots g_n g_1\cdots g_{i-1}$ and the number of all $\ov b f b$ is bounded by a constant dependent
only on the order of $P$. Hence, 
 we may assume that $f = g_{i} \cdots g_n g_1\cdots g_{i-1}$. Since the 
 word problem in  finitely generated  virtually free groups can be solved in linear time
 (e.g., using the system $S(P)$ or by computing the \slnf), we 
 may assume $2 < i < n$. (We also 
 see that the conjugacy problem  can be solved in quadratic time: but our 
 goal is linear time.) 
 
 Now, the  shortlex normal form of $g^2$ can be written as
 \[g_1 \cdots g_{n-1}[g_n\ov {a_1}][a_1g_1\ov {a_2}]\cdots [a_{n}g_n],\]
for appropriate $a_i\in P$. 
 As a consequence, 
 \[f\ov a_{i} = g_{i} \cdots g_{n-1}[g_n\ov {a_1}][a_1g_1\ov {a_2}]\cdots [a_{i-1}g_{i-1}\ov a_{i}].\] 
 However, the word $g_{i} \cdots g_{n-1}[g_n\ov {a_1}][a_1g_1\ov {a_2}]\cdots [a_{i-1}g_{i-1}\ov a_{i}]$ is in  shortlex normal form. Therefore the shortlex normal form
 of $f$ is $f'[a_{i-1}g_{i-1}]$, where \[f'= g_{i}\cdots g_{n-1} [g_n\ov {a_1}][a_1g_1\ov{a_2}]\cdots 
 [a_{i-2}g_{i-2}\ov a_{i-1}].\]

 Thus, it is enough to compute the shortlex normal form $\wh {f} = f'p$,
of $f$.  
 Erasing the last letter $p$ yields $f'$.  We can run 
 the  pattern matching algorithm of Knuth-Morris-Pratt,   in linear time, 
 in order to 
 obtain a list $(i_1, \ldots, i_k)$ with $2 < i_j < n$ where the 
 pattern $f'$ appears as $g_{i_j} \cdots g_{n-1} [g_n\ov {a_1}][a_1g_1\ov {a_2}]\cdots [a_{i_j-2}g_{i_j-2}\ov a_{i_j-1}].$
 All that  remains is to verify  whether or  not $[p \ov a_{i_j}]= [a_{i_j-1}g_{i_j-1}\ov a_{i_j}]$, for one index in the list.

 \end{proof}
\bibliographystyle{abbrv}
\newcommand{\Ju}{Ju}\newcommand{\Ph}{Ph}\newcommand{\Th}{Th}\newcommand{\Ch}{C%
h}\newcommand{\Yu}{Yu}\newcommand{\Zh}{Zh}

\end{document}